\documentclass{amsart}
\usepackage[l2tabu,orthodox]{nag}
\usepackage{geometry}
\usepackage{amssymb,latexsym,enumerate,verbatim}

\usepackage{tikz}
\usepackage{tikz-cd}
\usepackage{enumitem}
\usepackage{amsmath}
\usepackage{graphicx}

\usepackage[
backend=biber,
style=alphabetic,
sorting=nty
]{biblatex}
\usepackage{hyperref}
\hypersetup{
    colorlinks=true,
    linkcolor=blue,
    }

\theoremstyle{plain}
\newtheorem{Thm}[equation]{Theorem}
\newtheorem{Cor}[equation]{Corollary}
\newtheorem{Lem}[equation]{Lemma}
\newtheorem{Prop}[equation]{Proposition}

\theoremstyle{definition}
\newtheorem{Def}[equation]{Definition}

\newtheorem{Rem}[equation]{Remark}

\numberwithin{equation}{section}

\DeclareMathOperator{\im}{im}

\newcommand{\F}{\mathbb{F}}

\DeclareMathOperator{\Conf}{Conf}

\DeclareMathOperator{\Sym}{Sym}

\DeclareMathOperator{\tr}{tr}

\DeclareMathOperator{\id}{id}

\title{On the wreath transfer in the homology of symmetric groups}
\author{Dezhou Li}
\date{July 2026}

\begin{document}

\begin{abstract}
For an odd prime $p$, we give an explicit formula for the wreath transfer
\[
\tr\colon H_*(\Sigma_{p^2};\F_p)\longrightarrow
H_*(\Sigma_p\wr\Sigma_p;\F_p)
\]
associated with the standard inclusion $\Sigma_p\wr\Sigma_p\leq \Sigma_{p^2}$. The proof is a direct algebraic computation, using Cartan's classical double-coset formula together with a generating-function argument. As an immediate consequence, we obtain an elementary proof of the validity of the mixed Adem relations in Goodwillie calculus.
\end{abstract}

\maketitle

\section{Introduction}
The inclusion of the wreath product $\Sigma_p \wr \Sigma_p$ into $\Sigma_{p^2}$ plays a role in both classical topology, where the kernel of the induced map on homology encodes the Adem relations \cite{CLM1976}, and in recent studies such as calculations of Goodwillie towers \cite{AM1999} and Lie power operations \cite{Kjaer}. 
Moreover, the transfer associated with this inclusion is the canonical wrong-way map
$$
\tr:
H_*(\Sigma_{p^2};\mathbb F_p)\longrightarrow
H_*(\Sigma_p\wr\Sigma_p;\mathbb F_p),
$$
which is the primary object of study in this paper.

Our geometric motivation comes from the study of configuration spaces (for comprehensive background, see \cite{Knudsen2018}). Consider the $\Sigma_p \wr \Sigma_p$-equivariant projection map $\Conf_{p^2} \rightarrow (\Conf_p)^p$ over Euclidean space $\mathbb{R}^n$. This geometric map induces a homomorphism between their associated spectral sequences. On the $E_2$ page, this induced map takes the form:
$$H_s(\Sigma_{p^2}; H_t(\Conf_{p^2})) \longrightarrow H_s(\Sigma_p \wr \Sigma_p; H_t((\Conf_p)^p)).$$
At the bottom row where $t=0$, this yields the map$$H_s(\Sigma_{p^2}; \F_p) \longrightarrow H_s(\Sigma_p \wr \Sigma_p; \F_p),$$
which coincides precisely with the transfer homomorphism. Understanding this transfer map is a prerequisite for providing an alternative geometric proof of the Arone--Mahowald theorem and for explicitly computing the homology of the orbit space $B_k := \Conf_k /\Sigma_k$.

To evaluate this map, we must describe the basis elements for the $\F_p$-vector spaces on both sides. 
It is a classical result that $H_i(C_p) \cong \mathbb{F}_p$ for $i \geq 0$; let $e_i$ denote the canonical generator in degree $i$. By a slight abuse of notation, we will also use $e_i$ interchangeably to denote the corresponding generator of $H_i(\Sigma_p)$. Over $\F_p$, the basis elements of $H_*(\Sigma_{p^2})$ (see Theorem \ref{DLThm} for details) are represented by strongly admissible words in the Dyer--Lashof operations, taking the form:
$$\beta^{\epsilon_1} Q^{s_1} \circ \beta^{\epsilon_2} Q^{s_2}.$$
Strong admissibility requires that $s_i \in \mathbb{Z}$ for $i \in \{1,2\}$, and that $ps_2 - \epsilon_2 \geq s_1 > (p-1)s_2$.
On the other hand, the target space $H_*(\Sigma_p \wr \Sigma_p)$ contains pure tensor powers of the form $e_i \otimes e_j^{\otimes p}$ for $i, j \geq 0$. With the normalization fixed in Remark \ref{ImportantRem}, we write $Q^i\wr Q^j$ for the corresponding pure wreath tensor. The composition $Q^r \circ Q^s \in H_*(\Sigma_{p^2})$ is defined as the image of $Q^r \wr Q^s$ under the map $i_*: H_*(\Sigma_p \wr \Sigma_p) \rightarrow H_*(\Sigma_{p^2})$ induced by inclusion.

A natural problem arises regarding the explicit evaluation of this transfer map. Specifically, one wishes to determine the exact coefficients $c_{i,j} \in \F_p$ in the expansion:
$$\tr(Q^r \circ Q^s) = \sum c_{i,j} Q^i \wr Q^j.$$

For the prime $p=2$, the explicit formula was first provided by Priddy \cite{Priddy1973} without a formal proof. Subsequently, Kuhn computed this specific transfer in \cite[Example 7.6]{Kuhn} as a concrete demonstration of his structural double coset decomposition techniques. Furthermore, Behrens \cite[Lemma 1.4.3]{Behrens} simplified this formula to the explicit algebraic expansion:
$$\tr(Q^r \circ Q^s) = Q^r \wr Q^s + \sum_{l=0}^{r-s-1} \binom{2s-r+1+2l}{l} Q^{2s+1+l} \wr Q^{r-s-1-l}.$$

For odd primes $p > 2$, obtaining an explicit formula for this transfer has remained a longstanding open problem of topological interest. Establishing this general formula yields several immediate applications.  In addition to its relevance to configuration spaces, explicitly determining the coefficients of this transfer expansion is equivalent to establishing the ``mixed Adem relations" originally conjectured by Kjaer \cite[Conjecture 4.3]{Kjaer}. Konovalov \cite[Theorem 8.2.15]{Kono2025} has recently given a proof of Kjaer's conjecture using the machinery of functor calculus and the algebraic Goodwillie spectral sequence. 

Alongside deriving the exact formula for the wreath transfer, the aim of the present paper is to provide an alternative proof of Kjaer's conjecture through a purely algebraic and explicit computation of the transfer map. In this sense, our approach provides a direct and elementary proof of the mixed Adem relations.

With this context established, we state the main result of the paper.

\begin{Thm}[Wreath transfer formula]\label{mainThm}
    For the inclusion $\Sigma_{p} \wr \Sigma_{p} \le \Sigma_{p^{2}}$, the transfer homomorphism
    $$\tr: H_{*}(\Sigma_{p^{2}}; \mathbb{F}_{p}) \longrightarrow H_{*}(\Sigma_{p} \wr \Sigma_{p}; \mathbb{F}_{p})$$
    sends
    $$\beta^{\epsilon}Q^{r} \circ Q^{s} \mapsto \beta^{\epsilon}Q^{r} \wr Q^{s} + \sum_{0 \leq j < s} (-1)^{r-j}\binom{(j-s)(p-1)-1}{r-(p-1)s-j-1} \beta^{\epsilon}Q^{r+s-j} \wr Q^{j},$$
    where $p > 2$, $s(p-1) < r \leq sp$ and $\epsilon \in \{0,1 \}$. The binomial coefficient in the displayed formula is interpreted in the
    generalized sense, i.e.,
    $$
    \binom{-N}{k}
    =
    (-1)^k
    \binom{N+k-1}{k} \quad\text{if }N > 0, k \geq 0; \qquad   \binom{N}{k}=0
    \quad\text{if }k<0.
    $$
\end{Thm}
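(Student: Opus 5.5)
We compute $\tr\circ i_*$ on the pure tensor $\beta^\epsilon Q^r\wr Q^s$ using Cartan's double coset formula for the subgroup $W=\Sigma_p\wr\Sigma_p\le\Sigma_{p^2}$. By definition $\beta^\epsilon Q^r\circ Q^s=i_*(\beta^\epsilon Q^r\wr Q^s)$, and the double coset formula gives
\[
\tr\, i_*(x)=\sum_{WgW}(c_g)_*\,\tr^{W}_{W\cap gWg^{-1}}\bigl(\operatorname{res}\dots\bigr)(x),
\]
summed over double cosets $W\backslash\Sigma_{p^2}/W$, i.e.\ over "intersection patterns" of the block system $\{1..p\}\sqcup\dots$ with its translate by $g$.

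The first step is to cut the sum down modulo $p$. A double coset contributes only if the intersection $W\cap gWg^{-1}$ contains a Sylow-relevant elementary abelian piece that detects $x$. Concretely, I would restrict to $C_p\wr C_p$, whose mod-$p$ homology detects that of $\Sigma_p\wr\Sigma_p$ in the relevant degrees. Among the double cosets, the trivial one gives the identity term $\beta^\epsilon Q^r\wr Q^s$. All other cosets either have intersection of index divisible by $p$ in a way that kills the relevant classes, or have intersection conjugate to the "transposed" elementary abelian subgroup $C_p\times C_p$, with the top $C_p$ and the diagonal $C_p$ swapped. The latter, the $g$ realizing the $p\times p$ grid transposition, is the one nontrivial contribution. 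Its effect on $E=C_p\times C_p$ is the swap of the two coordinates composed with the Dickson-type identification.

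The second step uses Theorem \ref{DLThm} (the standard description of the detection of $Q^r\wr Q^s$ on $H_*(C_p\times C_p)$). This expresses the contribution as: restrict $Q^r\wr Q^s$ to $H_*(E)=H_*(C_p)^{\otimes2}$, apply the transposition, then reinduce and re-express in the basis $Q^a\wr Q^b$. The restriction of $Q^r\wr Q^s$ to $E$ is a sum $\sum_k \lambda_k\, e_{2r(p-1)-\dots}\otimes e_{\dots}$. The known formula for $Q^s$ applied to a degree-$d$ class on the diagonal $C_p$ involves binomials $\binom{\cdot}{\cdot}$ coming from the Nakaoka/Dyer–Lashof expansion $Q^s(e_d)=\sum \pm\binom{\ldots}{\ldots}e_{\ldots}$. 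After the swap we must invert this, i.e.\ solve for the coefficients $c_j$ with
\[
\text{(swapped restriction)}=\sum_j c_j\operatorname{res}(\beta^\epsilon Q^{r+s-j}\wr Q^j).
\]
The Bockstein $\beta^\epsilon$ rides along on the outer factor, which is why it appears uniformly.

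The third step, and the main obstacle, is this inversion. It is a triangular linear system indexed by $j<s$ whose entries are binomial coefficients mod $p$. I would package both sides as coefficients of generating functions, e.g.\ $\sum_k\binom{\ldots}{k}t^k=(1-t)^{-N}$, so that the transposition becomes a substitution and the inversion becomes multiplying power series. Reading off the coefficient of $t^{r-(p-1)s-j-1}$ in a series $(1-t)^{(j-s)(p-1)-1}$ then yields $(-1)^{r-j}\binom{(j-s)(p-1)-1}{r-(p-1)s-j-1}$. The negative upper index arises exactly from this inverse series. Matching signs $(-1)^{r-j}$ and verifying that the admissibility range $s(p-1)<r\le sp$ makes the terms with $j\ge s$ vanish, together with the boundary $k<0$ convention, are the bookkeeping checks at the end.
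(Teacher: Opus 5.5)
\paragraph{The gap: the shear double cosets are missing.}
Your first step keeps only the identity and the grid transposition, and that is where the argument fails. The vanishing you need (Lemma~\ref{0Lem}) concerns transfers between elementary abelian groups. So Cartan's formula has to be applied to $\tr_{G\to H}\circ i_{L\to G}$ with $L=C_p\times C_p$, where the inner transfers $\tr_{L\to L\cap gHg^{-1}}$ are between elementary abelian groups. ``Index divisible by $p$'' for $W\cap gWg^{-1}$ is not a vanishing criterion.

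The surviving double cosets $LgH$ are those with $g\in N_G(L)$, and they correspond to $GL(2,\F_p)/\mathbb{L}$, i.e.\ to the $p+1$ lines in $\F_p^2$. Besides $e$ and the swap $\gamma$, there are the $p-1$ shears $\tau^\lambda(x,y)=(x+\lambda y,y)$.

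In your $W\backslash G/W$ bookkeeping the shears lie in the same transverse double coset as $\gamma$, since every block system transverse to the columns is $W$-equivalent to the rows. But the local term there is $i\circ(c_{\gamma})_*\circ\tr_{W\to\Sigma_p\times\Sigma_p}$, and that transfer cannot be read off from May's diagonal formula. On classes coming from $L$ it splits into $p$ conjugation terms, one for each non-vertical line. ``Restrict, apply the transposition, reinduce'' computes only the term for the horizontal line.

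\paragraph{Why the omitted terms matter.}
Write $P=p-1$, $r=Ps+d$, $m=s-j$, $A_0=s+1-d$ and $F_m(z)=(1-z)^{-mP-1}$. Since $\sum_{\lambda\in\F_p^\times}\lambda^{l}=-1$ for $P\mid l$, the shears contribute $-1$ at $j=s$, cancelling the swap's $+1$ there. For $0\le j<s$ the swap term is $[z^{jp-r}]F_m(z)$, which vanishes whenever $s\le p$ because $jp-r<s-p$.

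For example, take $p=3$, $s=1$, $r=3$, computed with the preimage of Lemma~\ref{preimageLem}. Identity plus swap give $2\,Q^3\wr Q^1$, whereas $\tr(Q^3\circ Q^1)=Q^3\wr Q^1-Q^4\wr Q^0$. Both corrections come from the shears.

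In general the formula appears only because two terms share the $P$-section polynomial $D_{A_0}(z)=\sum_u\binom{A_0P}{uP}z^u$:
the shear term is $-[z^m]F_m(z)D_{A_0}(z)$,
and the swap term is $[z^m]F_m(z)\bigl(D_{A_0}(z)-z^{A_0}\bigr)$.
The $D_{A_0}$ parts cancel, leaving $-[z^{d-j-1}]F_m(z)$.

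That minus sign is not bookkeeping. The coefficient of $t^{d-j-1}$ in $(1-t)^{(j-s)(p-1)-1}$ that you propose to read off equals $\binom{mP+d-j-1}{d-j-1}$, which is $-C_j$. The sign is exactly the shear sum.

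\paragraph{Two further problems.}
Your third step never derives that series from anything you computed, so it asserts the result rather than proving it.

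Also, what you call the restriction of $Q^r\wr Q^s$ to $E$ should be an explicit preimage $X(r,s;\epsilon)$ under $(\iota_\Delta)_*$. This is obtained by inverting May's triangular formula (Lemma~\ref{Mayformula}). Theorem~\ref{DLThm} is only the basis theorem and provides no such detection formula.
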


\begin{Rem}
    By Lemma \ref{indexProp}, the composition $i_* \circ \tr$ acts as multiplication by the index $[\Sigma_{p^2} : \Sigma_p \wr \Sigma_p]$. Because this index is congruent to $1 \pmod p$, applying $i_*$ to the expansion above yields $\beta^{\epsilon} Q^r \circ Q^s$. Consequently, the image of the residual summation under $i_*$ must vanish; this vanishing is explained by the classical Adem relations.
\end{Rem}

We now state Kjaer's conjecture using this transfer formula for odd $p$. Adapting Kjaer's notation, for a spectrum $X$, define
$$
\mathbb D_n(X):=(\partial_n\wedge X^{\wedge n})_{h\Sigma_n},
$$
where $\partial_n$ denotes the $n$-th spectrum of the spectral Lie operad and $(-)_{h\Sigma_n}$ denotes homotopy orbits.
In the corollary below, $\mathbb{S}^{2l+1} = \Sigma^{2l+1} \mathbb{S}^0$ is a sphere spectrum.

\begin{Cor}\label{KjaerCor}
Let $l\geq 0$, and let $\iota\in H_{2l+1}(\mathbb{S}^{2l+1};\mathbb F_p)$ denote its generator. In
$H_*(\mathbb D_{p^2}(\mathbb{S}^{2l+1});\mathbb F_p),$
we have
$$
\beta^\epsilon Q^rQ^s\iota
=
-\sum_{0\leq j<s}
(-1)^{r-j}
\binom{(j-s)(p-1)-1}{r-(p-1)s-j-1}
\beta^\epsilon Q^{r+s-j}Q^j\iota,
$$
where $p > 2$, $s(p-1)<r\leq sp$ and $\epsilon\in\{0,1\}$.
\end{Cor}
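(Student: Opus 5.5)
We derive Corollary \ref{KjaerCor} from Theorem \ref{mainThm} by combining it with the known structure of $H_*(\mathbb D_p(\mathbb S^{2l+1}))$ and with the way the iterated operations $Q^rQ^s\iota$ in $\mathbb D_{p^2}$ are built. The idea is that the iterated Dyer--Lashof operations in $H_*(\mathbb D_{p^2}(\mathbb S^{2l+1}))$ are defined through a map of the form
\[
\mathbb D_p(\mathbb D_p(X)) \longrightarrow \mathbb D_{p^2}(X),
\]
coming from operadic composition. Its source is
\[
\bigl(\partial_p\wedge(\partial_p\wedge X^{\wedge p})^{\wedge p}\bigr)_{h\Sigma_p\wr\Sigma_p},
\]
and the map goes to the $\Sigma_{p^2}$-orbits of $\partial_{p^2}\wedge X^{\wedge p^2}$. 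The class $Q^r\wr Q^s\,\iota$ lives in the homology of the source, and $Q^rQ^s\iota$ is its image.

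First, we use the fact, exploited by Kjaer and Arone--Mahowald, that $\partial_{p^2}$ restricted to $\Sigma_p\wr\Sigma_p$ differs from the induced composite $\partial_p\circ\partial_p$ only by a free part and by transfer-related terms. The relevant structural input is that, after smashing with $X^{\wedge p^2}$ for $X$ an odd sphere and taking orbits, the map on homology from $\mathbb D_p\mathbb D_p$ to $\mathbb D_{p^2}$ factors as follows:
\begin{enumerate}
\item first the inclusion $H_*(\Sigma_p\wr\Sigma_p;\text{twisted }\F_p)\to H_*(\Sigma_{p^2};\text{twisted }\F_p)$, i.e.\ the map $\beta^\epsilon Q^r\wr Q^s\mapsto\beta^\epsilon Q^r\circ Q^s$ (with the sign twist by the odd-dimensional sphere absorbed as in Remark \ref{ImportantRem});
\item then a map whose composite with the transfer $\tr$ recovers the identity on the ``Lie'' part.
\end{enumerate}
Concretely, I would show that the relevant homology of $\mathbb D_{p^2}(\mathbb S^{2l+1})$ is a quotient of $H_*(\Sigma_p\wr\Sigma_p)$ of the form $\operatorname{coker}(\tr)$ or $\ker(i_*)$, so that an element of $\im(\tr)$ maps to zero.

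Second, we apply Theorem \ref{mainThm}. The right-hand side $\tr(\beta^\epsilon Q^r\circ Q^s)$, viewed as a class in the wreath homology, maps to zero in $H_*(\mathbb D_{p^2}(\mathbb S^{2l+1}))$. Its image is exactly
\[
\beta^\epsilon Q^rQ^s\iota+\sum_{0\le j<s}(-1)^{r-j}\binom{(j-s)(p-1)-1}{r-(p-1)s-j-1}\beta^\epsilon Q^{r+s-j}Q^j\iota,
\]
and rearranging gives the displayed relation. This step requires checking two things:
\begin{itemize}
\item the shift by $\iota$ in odd degree $2l+1$ does not introduce extra signs. Here we use the normalization of Remark \ref{ImportantRem}, under which $Q^i\wr Q^j$ corresponds to $Q^iQ^j\iota$.
\item the admissibility range $s(p-1)<r\le sp$ matches the range in which the theorem applies.
\end{itemize}

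The main obstacle is the first step: rigorously identifying that the composite
\[
H_*(\Sigma_{p^2})\xrightarrow{\tr}H_*(\Sigma_p\wr\Sigma_p)\to H_*(\mathbb D_{p^2}(\mathbb S^{2l+1}))
\]
vanishes. I would try to prove this via the Arone--Mahowald/Kjaer description. For an odd sphere, $\partial_{p^2}\wedge(\mathbb S^{2l+1})^{\wedge p^2}$ is, $p$-locally and as a $\Sigma_{p^2}$-spectrum, related to the Tits building, and its orbits are computed by $\Sigma_p\wr\Sigma_p$-orbits modulo the image of the transfer from $\Sigma_{p^2}$. This is exactly Kjaer's framework, in which the mixed Adem relations are formulated as relations coming from $\tr$. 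With that identification, which I would cite from \cite{Kjaer} or \cite{AM1999}, the corollary is immediate from Theorem \ref{mainThm}.
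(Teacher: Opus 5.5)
Your proposal takes the same route as the paper: cite the Arone--Mahowald/Kjaer surjection from the (doubly desuspended) homology of the $\Sigma_p\wr\Sigma_p$-orbits onto $H_*(\mathbb D_{p^2}(\mathbb S^{2l+1});\F_p)$ whose kernel is $\im(\tr)$, note that the element produced by Theorem \ref{mainThm} maps to zero, and rearrange. One caution: drop the suggested factorization through $i_*$ and the alternative ``$\ker(i_*)$''. Since $i_*\circ\tr$ is multiplication by $[\Sigma_{p^2}:\Sigma_p\wr\Sigma_p]\equiv 1 \pmod p$, no nonzero map of the form $f\circ i_*$ can annihilate $\im(\tr)$; the cokernel-of-transfer identification you end up citing is exactly what is needed.
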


\begin{Rem}
    This agrees with Konovalov's relation \cite[Definition 8.2.4]{Kono2025} after the change of variables. As he noted in \cite[Remark 8.2.17]{Kono2025}, the formula for the transfer was unknown and could be used to give an alternative proof of Kjaer's conjecture.
\end{Rem}

To prove Theorem \ref{mainThm}, we use Cartan's double-coset formula. Applied to the subgroup tower
\[
C_p\times C_p \leq C_p\wr C_p \leq \Sigma_p\wr\Sigma_p \leq \Sigma_{p^2},
\]
this formula decomposes the global transfer into a sum of local terms induced by conjugation. In this case, the only nonzero terms are represented by the set
$S=\{e,\gamma,\tau^\lambda\mid \lambda\in\F_p^\times\}$,
consisting of the identity element, the swap element, and the $p-1$ shear elements. Thus the computation is organized by the following diagram:
\[
\begin{tikzcd}
	{H_*(C_p \times C_p)} & {H_*(C_p \wr C_p)} & {H_*(\Sigma_p \wr \Sigma_p)} & {H_*(\Sigma_{p^2})} \\
	\\
	{H_*(C_p \times C_p)} && {H_*(C_p \times C_p)} & {H_*(\Sigma_p \wr \Sigma_p)}
	\arrow["{(\iota_{\Delta})_*}", from=1-1, to=1-2]
	\arrow["\id"', from=1-1, to=3-1]
	\arrow["{i_*}", from=1-2, to=1-3]
	\arrow["{i_*}", from=1-3, to=1-4]
	\arrow["\tr", from=1-4, to=3-4]
	\arrow["{\sum_{g \in S}(c_{g^{-1}})_*}"', from=3-1, to=3-3]
	\arrow["{i_* \circ (\iota_{\Delta})_*}"', from=3-3, to=3-4]
\end{tikzcd}
\]
Thus it remains to compute these local contributions separately and then sum the resulting coefficients to obtain the global transfer formula.

\subsection{Conventions}
Throughout this paper, all homology groups are computed with coefficients in the finite field $\F_p$ for an odd prime $p > 2$. For notational brevity, we adopt the following abbreviations: $G = \Sigma_{p^2}$, $H = \Sigma_p \wr \Sigma_p$, $K = C_p \wr C_p$, and $L = C_p \times C_p$.

\subsection{Organization of the paper}
In Section 2, we review some group homology tools used in the proof. We recall the inclusion and transfer maps, Cartan's double-coset formula, and the vanishing criterion for transfers between elementary abelian $p$-groups. We then introduce the subgroup tower inside $\Sigma_{p^2}$, and identify the double-coset representatives that contribute to the transfer calculation. The section also reviews the homology of wreath products, May's formula for the standard diagonal embedding, the Dyer--Lashof indexing conventions, and the conjugation formulas associated with the identity, shear, and swap representatives.

In Section 3, we prove the wreath transfer formula. We first choose a preimage element $X(r,s;\epsilon)\in H_*(C_p\times C_p)$ whose image under the standard diagonal embedding represents the class to which the transfer is applied. We then use Cartan's double-coset formula to express the transfer coefficient as a sum of three terms: the identity contribution $I_j$, the shear contribution $S_j$, and the swap contribution $G_j$. 
The section evaluates the boundary range $j\geq s$, reduces the interior range $0\leq j<s$ to a separate coefficient identity, and treats the Bockstein case. Together with the coefficient identity proved in Section 4, these results prove Theorem \ref{mainThm} and Corollary \ref{KjaerCor}.

In Section 4, we prove the coefficient identity needed for the interior range $0\leq j<s$. The proof is a generating-function calculation. The shear contribution and the swap contribution are first rewritten as coefficient extractions involving the same auxiliary $P$-section polynomial $D_A(z)$. An averaging identity over $\mathbb F_p^\times$ and a coefficient-extraction lemma are then used to compare the two expressions. Finally, the common $P$-section $D_A(z)$ terms cancel, leaving the required binomial coefficient and completing the proof of the interior coefficient formula.

\subsection{Acknowledgments}
I am grateful to my Ph.D. advisor, Ben Knudsen, for his guidance and support. The idea for this paper grew out of questions arising in the study of configuration spaces. I would also like to thank Guchuan Li for his support and for helping me extend the formula to the general case. Finally, I thank Dev Sinha for helpful conversations and encouragement. I used ChatGPT for language editing and for assistance with the coefficient-extraction formulation of the swap contribution (Proposition \ref{swapcontri}) in Section 4. All mathematical content was checked by the author.

\section{Preliminaries on group homology}
In this section, we review essential tools from group homology.
For a subgroup $H \leqslant G$, we have the natural map induced by inclusion and the wrong-way transfer map:
\begin{align*} 
i_{H \rightarrow G} &: H_*(H) \rightarrow H_*(G) \\ 
\tr_{G \rightarrow H} &: H_*(G) \rightarrow H_*(H)
\end{align*}
While there are many notational conventions for these maps, we adopt the notation from \cite{KP1977}. Furthermore, when the context is clear, we will simply write $i_*$ for the induced map and $\tr$ for the transfer map.

\subsection{Cartan's double-coset formula}
We shall recall some facts about the inclusion and transfer.
The most important one is Cartan's double-coset formula, which allows us to decompose global transfers into local conjugations. 

We first recall a vanishing criterion for the transfer map in the context of elementary abelian groups.

\begin{Lem}[{\cite[Lem 3.1]{KP1977}}]\label{0Lem}
    Let $G$ be an elementary abelian $p$-group and let $H$ be a proper subgroup of $G$. Then $\tr_{G \rightarrow H} = 0$.
\end{Lem}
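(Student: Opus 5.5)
The plan is to realize the transfer at the chain level by the standard cochain/chain transfer formula and then exploit that $G$ is abelian of exponent $p$. Write $n=[G:H]$, which is a power of $p$, and $n>1$ because $H$ is proper. Choose coset representatives $g_1,\dots,g_n$ for $G/H$. On the bar resolution (or any projective $\mathbb{Z}G$-resolution $P_*$ of $\mathbb{Z}$, viewed as a $\mathbb{Z}H$-resolution by restriction), the transfer $\tr_{G\to H}\colon H_*(G)\to H_*(H)$ is induced by the chain map
\[
\mathbb{Z}\otimes_{G}P_* \longrightarrow \mathbb{Z}\otimes_{H}P_*, \qquad 1\otimes x\longmapsto \sum_{k=1}^{n} 1\otimes g_k^{-1}x .
\]
Composing with the inclusion $i_{H\to G}$ gives multiplication by $n$ on $H_*(G)$, and this vanishes in $\mathbb{F}_p$-coefficients. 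That alone does not give $\tr=0$, so a finer argument is needed.

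The cleanest route is to reduce to a subgroup of index $p$. Since $G$ is elementary abelian, $H$ is contained in a subgroup $M$ with $[G:M]=p$. Transitivity of transfers gives $\tr_{G\to H}=\tr_{M\to H}\circ\tr_{G\to M}$, so it suffices to show $\tr_{G\to M}=0$.

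To do this, split $G=M\times C$ with $C=\langle c\rangle\cong C_p$. This is possible because $G$ is an $\mathbb{F}_p$-vector space. By the Künneth theorem, $H_*(G)\cong H_*(M)\otimes H_*(C)$. The transfer $G\to M=M\times 1$ is compatible with this product decomposition, so it equals $\id_{H_*(M)}\otimes \tr_{C\to 1}$. This compatibility is checked using the tensor product of resolutions $P^M\otimes P^C$ and coset representatives $c^k$, which lie in $C$.

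It therefore remains to show that $\tr_{C_p\to 1}\colon H_*(C_p;\mathbb{F}_p)\to H_*(1;\mathbb{F}_p)$ is zero. In positive degrees the target vanishes. In degree $0$ the map is multiplication by $p$ from $\mathbb{F}_p$ to $\mathbb{F}_p$, which is $0$. So $\tr_{G\to M}=0$, and hence $\tr_{G\to H}=0$.

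The main obstacle is the compatibility of the transfer with the Künneth splitting: one must verify that the product chain-level transfer agrees with $\id\otimes\tr$. I would handle this by choosing coset representatives inside the complement $C$, so that they act only on the $P^C$ factor. If this became awkward, the fallback is the double coset formula applied to $\tr_{G\to M}$ followed by restriction, combined with the standard fact that the transfer for a normal subgroup of index $p$ factors through the norm element $\sum_{k} c^k$. Since $C$ acts trivially on $H_*(M)$, the norm element acts on $H_*(M)$ as multiplication by $p=0$.
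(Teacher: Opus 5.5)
The paper gives no proof of this lemma. It is quoted from Kahn--Priddy and used only to discard the local terms $\tr_{L\to gHg^{-1}\cap L}$ with $gHg^{-1}\cap L\neq L$ in Cartan's formula. Your argument is a correct, self-contained proof, which buys independence from the reference at the cost of a short chain-level check.

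That check is right. With coset representatives $c^k$ taken in the complement $C$, the chain transfer $\F_p\otimes_{M\times C}(P^M\otimes P^C)\to\F_p\otimes_{M}(P^M\otimes P^C)$ is exactly $\id\otimes t_C$, where $t_C$ is the chain transfer for $C\to 1$. The K\"unneth isomorphism over the field $\F_p$ then gives $\tr_{G\to M}=\id\otimes\tr_{C\to 1}=0$.

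Two comments. First, the passage to an index-$p$ subgroup is unnecessary. Since $G$ is an $\F_p$-vector space, you can take a complement $K\neq 1$ of $H$ itself. The identical computation gives $\tr_{G\to H}=\id_{H_*(H)}\otimes\tr_{K\to 1}$, where $\tr_{K\to 1}$ is multiplication by $|K|\equiv 0$ in degree $0$ and has zero target in positive degrees. This also makes clear that the $\F_p$-coefficients are used only at this degree-$0$ step.

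Second, your stated fallback would not work as a substitute. The norm identity computes $\tr_{G\to M}\circ i_{M\to G}$, which is $p\cdot\id=0$ on $H_*(M)$ because $G$ is abelian. So it only shows that $\tr_{G\to M}$ kills the image of $i_{M\to G}$, namely the classes $x\otimes e_0$ in $H_*(M)\otimes H_*(C)$. It says nothing about classes $x\otimes e_i$ with $i>0$. Since your main argument is complete, this does not affect the proof.
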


We frequently need to compose the transfer map with the induced map.

\begin{Prop}[{\cite[Chapter 3, Prop 9.5]{Brown}}]\label{indexProp}
    Suppose $H \leqslant G$ and $[G : H] < \infty$. Then
    $$i_{H \rightarrow G} \circ \tr_{G \rightarrow H} = [G : H] \id.$$
\end{Prop}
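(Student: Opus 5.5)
The identity $i_{H\to G}\circ\tr_{G\to H}=[G:H]\,\id$ is proved at the chain level, using a standard projective resolution $P_*\to\mathbb{Z}$ of $G$. Since $H$ has finite index, $P_*$ restricted to $H$ is still a projective resolution of $\mathbb{Z}$ over $\mathbb{Z}H$. Hence $H_*(H)$ is computed by $P_*\otimes_{\mathbb{Z}H}\mathbb{Z}$ and $H_*(G)$ by $P_*\otimes_{\mathbb{Z}G}\mathbb{Z}$. The map $i_{H\to G}$ is then induced by the canonical projection
\[
\pi\colon P_*\otimes_{\mathbb{Z}H}\mathbb{Z}\longrightarrow P_*\otimes_{\mathbb{Z}G}\mathbb{Z},\qquad x\otimes 1\mapsto x\otimes 1 .
\]

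Next we fix coset representatives $g_1,\dots,g_n$ for $H\backslash G$, where $n=[G:H]$. The transfer is induced by
\[
t\colon P_*\otimes_{\mathbb{Z}G}\mathbb{Z}\longrightarrow P_*\otimes_{\mathbb{Z}H}\mathbb{Z},\qquad x\otimes 1\mapsto \sum_{k=1}^n g_k x\otimes 1 .
\]
This matches the definition of $\tr$ in \cite{Brown}, and we will adopt it as the working definition. We then verify three properties of $t$. First, it is well defined on $G$-coinvariants: replacing $x$ by $gx$ permutes the cosets $Hg_kg$, and the resulting factors from $H$ are absorbed by $\otimes_{\mathbb{Z}H}$. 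Second, its value does not depend on the choice of coset representatives, for the same reason. Third, it commutes with the differential, because the differential of $P_*$ is $G$-linear.

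Composing the two maps, we get
\[
\pi\circ t(x\otimes 1)=\sum_{k=1}^n g_kx\otimes_{\mathbb{Z}G}1=\sum_{k=1}^n x\otimes 1=n\,(x\otimes 1).
\]
So $\pi\circ t$ is multiplication by $n$ already at the chain level, and passing to homology gives the stated identity.

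The only delicate point is the well-definedness of $t$ and its independence of the choice of representatives. The rest of the argument is bookkeeping.
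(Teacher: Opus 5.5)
Your proof is correct and is the standard argument in the source the paper cites (Brown, Chapter III, Proposition 9.5); the paper gives no proof of its own, and Brown's proof is likewise the computation $\sum_{k} \overline{g_k x}=[G:H]\,\overline{x}$ in $G$-coinvariants, which you carry out on $P_*\otimes_{\mathbb{Z}H}\mathbb{Z}\to P_*\otimes_{\mathbb{Z}G}\mathbb{Z}$ in all degrees at once. One small correction: restricting $P_*$ to $H$ gives a projective resolution for any subgroup, since $\mathbb{Z}G$ is free over $\mathbb{Z}H$. Finite index is needed only so that the transfer sum is finite.
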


The primary computational tool for this paper is the double-coset formula.

\begin{Prop}[{\cite[Prop 3.2]{KP1977}}]\label{doublecosetProp}
    Suppose $H$ and $L$ are two subgroups of $G$ with $[G : H] < \infty$ and $[G : L] < \infty$. Then,
    $$\tr_{G \rightarrow H} \circ i_{L \rightarrow G} = \sum_{g \in E} i_{H \cap g^{-1} L g \rightarrow H} \circ (c_{g^{-1}})_* \circ \tr_{L \rightarrow g H g^{-1} \cap L}$$
    where $E$ is a set of representatives for the double cosets $L\backslash G/H$ and 
    $$c_{g^{-1}}: gHg^{-1} \cap L \rightarrow H \cap g^{-1}Lg, \quad x \mapsto g^{-1}xg. $$
\end{Prop}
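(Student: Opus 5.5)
The plan is to prove the formula at the chain level, using one projective resolution for all the groups at once. Fix a projective resolution $P_\bullet \to \mathbb{Z}$ of left $\mathbb{Z}G$-modules (tensored with $\F_p$ as needed). Restricted to any subgroup $M \leq G$, it is still a projective resolution, since $\mathbb{Z}G$ is free over $\mathbb{Z}M$. So $H_*(M)$ is computed by $\mathbb{Z}\otimes_{M} P$ for every subgroup $M$ appearing in the statement: $G$, $H$, $L$, $gHg^{-1}\cap L$ and $H\cap g^{-1}Lg$.

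I will use the following standard chain-level models.
\begin{itemize}
\item The map $i_{M\to M'}$ is the projection $[x]\mapsto [x]$ from $\mathbb{Z}\otimes_M P$ to $\mathbb{Z}\otimes_{M'} P$.
\item The transfer $\tr_{M'\to M}$ is $[x]\mapsto \sum_{t\in M'/M}[t^{-1}x]$. This does not depend on the coset representatives, because replacing $t$ by $tm$ changes $t^{-1}x$ by a left factor $m^{-1}\in M$. It is also well defined on $M'$-coinvariants, because left multiplication by $m'\in M'$ permutes the cosets.
\item The conjugation $(c_{g^{-1}})_*$, from $\mathbb{Z}\otimes_{gHg^{-1}\cap L}P$ to $\mathbb{Z}\otimes_{H\cap g^{-1}Lg}P$, is $[y]\mapsto [g^{-1}y]$. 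This is well defined because $g^{-1}(ay)=(g^{-1}ag)\,g^{-1}y$. It is a chain map covering the group isomorphism $x\mapsto g^{-1}xg$, so it induces $(c_{g^{-1}})_*$ on homology.
\end{itemize}
I will take these as the definitions of the maps, consistent with \cite{KP1977} and \cite{Brown}.

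First I compute the left-hand side. For $[x]\in \mathbb{Z}\otimes_L P$,
\[
\tr_{G\to H}\, i_{L\to G}[x]=\sum_{t\in G/H}[t^{-1}x]\in \mathbb{Z}\otimes_H P .
\]
Next I decompose the left $L$-set $G/H$ into orbits, which are the double cosets $LgH$ with $g\in E$. The stabilizer of the coset $gH$ in $L$ is $L\cap gHg^{-1}$. Hence the orbit $LgH$ is in bijection with $L/(L\cap gHg^{-1})$ via $l\mapsto lgH$, and I may take the representatives $t=lg$ with $l$ running over $L/(L\cap gHg^{-1})$. The sum therefore splits as
\[
\sum_{g\in E}\ \sum_{l\in L/(L\cap gHg^{-1})}[g^{-1}l^{-1}x].
\]

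Then I compute the $g$-summand of the right-hand side and compare. The transfer gives $\tr_{L\to gHg^{-1}\cap L}[x]=\sum_{l}[l^{-1}x]$. Applying $(c_{g^{-1}})_*$ gives $\sum_l [g^{-1}l^{-1}x]$. Finally, $i_{H\cap g^{-1}Lg\to H}$ maps this to the same expression read in $\mathbb{Z}\otimes_H P$. This agrees term by term with the $g$-summand above.

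The remaining point is independence of choices. Replacing $g$ by another representative $l'gh$ of the same double coset changes each term only by factors absorbed into the coinvariants. So the sum over $E$ is independent of the choice of $E$, and both sides agree as chain maps, hence on homology.

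I expect the main obstacle to be bookkeeping rather than any conceptual step. The conventions, namely left versus right actions, $t$ versus $t^{-1}$ in the transfer, and $c_g$ versus $c_{g^{-1}}$, must be fixed so that the chain-level transfer really induces the paper's $\tr$. The stabilizer identification must also land exactly on $gHg^{-1}\cap L$ and $H\cap g^{-1}Lg$ as in the statement. I will fix the convention by checking it against Proposition~\ref{indexProp}: with $L=G$ there is one double coset, and the formula should return $i\circ\tr=[G:H]$ on $H_*(G)$.
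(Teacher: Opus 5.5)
Your proposal is correct. It differs from the paper in that the paper gives no proof of this proposition: it quotes it from Kahn--Priddy \cite[Prop 3.2]{KP1977} and uses it as a black box. What you give is the standard chain-level Mackey argument, as in \cite[Chapter III, \S9]{Brown}. Your chain-level model of the transfer, $[x]\mapsto\sum_{t\in G/H}[t^{-1}x]$, agrees with Brown's $\bar x\mapsto\sum_{Hg\in H\backslash G}\overline{gx}$. You split $G/H$ into $L$-orbits $LgH\cong L/(L\cap gHg^{-1})$ and choose the representatives $t=lg$, and then the two sides match term by term. Your well-definedness checks are exactly what is needed. This includes the check that $[y]\mapsto[g^{-1}y]$ is $c_{g^{-1}}$-equivariant, so that it really induces $(c_{g^{-1}})_*$.

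Two small corrections to the surrounding remarks.

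First, the paragraph on independence of the choice of $E$ is unnecessary. Your computation works for an arbitrary set of double-coset representatives, and the left-hand side does not involve $E$.

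Second, the sanity check proposed in your last paragraph does not work as stated. With $L=G$ there is a single double coset, and the formula collapses to the tautology $\tr_{G\to H}=\tr_{G\to H}$. The identity $i_{H\to G}\circ\tr_{G\to H}=[G:H]$ of Proposition~\ref{indexProp} concerns the other composite, and it is not a special case of the double-coset formula. It does follow immediately from your chain-level model: $\sum_{t\in G/H}[t^{-1}x]$, read in $\mathbb{Z}\otimes_G P$, equals $[G:H]\,[x]$. So your conventions are consistent with the paper's.

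A minor further point: the hypothesis $[G:L]<\infty$ is never used in your argument. Only $[G:H]<\infty$ matters, and it also guarantees that $E$ is finite, since $E$ indexes the $L$-orbits on $G/H$.
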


\subsection{A subgroup tower for the symmetric group}\label{subgrouptowersubsection}
In this subsection, we define a subgroup tower of $G = \Sigma_{p^2}$ by considering its natural action on a $p \times p$ grid of points, and simplify the double-coset sum.

Let $\Omega = \F_p^2$ be the $p \times p$ grid of points $(x,y)$. Define $G = \Sym (\Omega) \cong \Sigma_{p^2}$.
Thus $G$ is the group of all permutations of the $p^2$ points of $\Omega$. For each $x \in \F_p$, define the vertical block $$B_x := \{x \} \times \F_p.$$
Then, $\{ B_x \mid x \in \F_p \}$ is a partition of $\Omega$ into $p$ blocks, each containing $p$ points. We let the group $H$ be the setwise stabilizer of this partition. Thus, $H = \Sigma_p \ltimes (\Sigma_p)^p = \Sigma_p \wr \Sigma_p$ and we may define its Sylow $p$-subgroup $K = C_p \wr C_p$. Moreover, we define $\alpha(x,y) = (x+1, y)$ and $\beta(x,y) = (x, y+1)$. We let the group $L = \langle \alpha, \beta \rangle \cong C_p \times C_p$, and the action of $L$ on $\Omega$ is regular.

The translation $\alpha$ sends $B_{x} \mapsto B_{x+1}$, so it cyclically permutes the $p$ blocks. The translation $\beta$ preserves each block and acts on every block by the same $p$-cycle sending $y \mapsto y+1$. Hence, $L \hookrightarrow K$. In wreath-product coordinates, the inclusion is the standard diagonal embedding:
$$\iota_{\Delta}: C_p \times C_p \rightarrow C_p \ltimes (C_p)^p,$$ such that
$$\iota_{\Delta}(a,b) = (\sigma_{a}; \rho_{b}, \rho_{b}, \dots, \rho_{b}),$$
where $\sigma_{a}(x) = x + a$ and $\rho_{b}(y) = y + b$. Hence, we have the following tower of subgroups:
$$L = C_p \times C_p \hookrightarrow K = C_p \wr C_p \hookrightarrow H = \Sigma_p \wr \Sigma_p \hookrightarrow G = \Sigma_{p^2}.$$

We may simplify the double-coset sum using the following result.

\begin{Lem}[{\cite[Prop 2.5]{KP1977}}]\label{conjLem}
    Let $E \leq H = \Sigma_p \wr \Sigma_p$ be an elementary abelian subgroup of order $p^2$ acting transitively on $\Omega$. Then $E$ is conjugate inside $H$ to the subgroup $L = C_p \times C_p$.
\end{Lem}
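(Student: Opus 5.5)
The plan is to put $E$ into normal form one generator at a time, conjugating only by elements of $H$. Throughout, write elements of $H=\Sigma_p\ltimes(\Sigma_p)^p$ as $(\sigma;f_0,\dots,f_{p-1})$, and let $\pi\colon H\to\Sigma_p$ be the action on the set of blocks $\{B_x\}$.

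\textbf{Step 1: regularity and the block kernel.} Since $|E|=p^2=|\Omega|$ and $E$ is transitive, $E$ acts regularly on $\Omega$. Because $E$ permutes the blocks transitively, $\pi(E)\le\Sigma_p$ is a transitive $p$-subgroup, so it is generated by a $p$-cycle and has order $p$. Hence $N:=E\cap(\Sigma_p)^p=\ker(\pi|_E)$ has order $p$, and it is exactly the stabilizer in $E$ of each block $B_x$. By regularity of $E$, the group $N$ acts freely on the $p$ points of $B_x$, hence transitively. So a generator is $n=(\mathrm{id};\rho_0,\dots,\rho_{p-1})$ with every $\rho_x$ a $p$-cycle. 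Choose $\tau_x\in\Sigma_p$ with $\tau_x\rho_x\tau_x^{-1}=\rho_1$, where $\rho_1$ is the standard cycle $y\mapsto y+1$. Conjugating $E$ by the base-group element $(\mathrm{id};\tau_0,\dots,\tau_{p-1})$ turns $n$ into $\beta=(\mathrm{id};\rho_1,\dots,\rho_1)$.

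\textbf{Step 2: the block-permuting generator.} Pick $a\in E$ with $\pi(a)$ a $p$-cycle. Conjugate by a top element $(\theta;\mathrm{id},\dots,\mathrm{id})$; such an element commutes with $\beta$, since $\beta$ acts identically on every block. This arranges $\pi(a)=\sigma_1$, so $a=(\sigma_1;f_0,\dots,f_{p-1})$. Since $E$ is abelian, $a$ commutes with $\beta$, which forces each $f_x$ to centralize $\rho_1$ in $\Sigma_p$. Hence $f_x=\rho_1^{k_x}$ for some $k_x\in\F_p$. Since $E$ is elementary abelian, $a^p=1$. Computing $a^p=(\mathrm{id};\rho_1^{\sum k_x},\dots)$ then gives $\sum_x k_x\equiv 0\pmod p$.

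\textbf{Step 3: the cocycle step, which I expect to be the main obstacle.} This is the one step where the bookkeeping must be done carefully. Conjugate by $t=(\mathrm{id};\rho_1^{m_0},\dots,\rho_1^{m_{p-1}})$, which still commutes with $\beta$. This replaces $k_x$ by $k_x+m_{x+1}-m_x$; up to sign and index-shift conventions for the wreath multiplication, it is a coboundary on $\mathbb{Z}/p$. Solving $m_{x+1}-m_x=-k_x$ recursively around the cycle is consistent precisely because $\sum_x k_x=0$, which is exactly what Step 2 supplies. After this conjugation, $a=(\sigma_1;\mathrm{id},\dots,\mathrm{id})=\alpha$.

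\textbf{Conclusion.} Each conjugation was by an element of $H$. The conjugated group contains $\alpha$ and $\beta$ and has order $p^2$, so it equals $\langle\alpha,\beta\rangle=L$. Hence $E$ is conjugate to $L$ inside $H$.
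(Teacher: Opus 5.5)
Your argument is correct and complete. The paper itself gives no proof of this lemma. It is quoted from \cite[Prop 2.5]{KP1977} and used only as a black box when Cartan's formula is reduced to representatives in $N_G(L)$. So your proposal is a self-contained substitute for a citation, not a variant of an argument in the text.

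Your route buys transparency. It uses only regularity, the block kernel $N=E\cap(\Sigma_p)^p$, the fact that a $p$-cycle is self-centralizing in $\Sigma_p$, and a one-line coboundary computation. It produces an explicit conjugating element of $H$: a base-group element, then a top element, then another base-group element. It also shows exactly where each hypothesis enters:
\begin{itemize}
\item transitivity together with $|E|=p^2$ gives regularity, hence $p$-cycles on every block;
\item commutativity gives $f_x\in\langle\rho_1\rangle$;
\item exponent $p$ gives $\sum_x k_x=0$, which is precisely the solvability condition for $m_{x+1}-m_x=-k_x$.
\end{itemize}
That last use is sharp. The cyclic group generated by $(\sigma_1;\rho_1,\mathrm{id},\ldots,\mathrm{id})$ is abelian and regular of order $p^2$, has $\sum_x k_x=1$, and is not conjugate to $L$. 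The citation costs nothing in length, but it leaves the reader to check that the cited statement matches the present conventions for $\Omega$, the blocks $B_x$, and $L=\langle\alpha,\beta\rangle$.

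Two cosmetic points. First, the hedge ``up to sign and index-shift conventions'' is unnecessary. With $(\sigma;f_0,\ldots,f_{p-1})$ acting by $(x,y)\mapsto(\sigma(x),f_x(y))$, one computes directly $tat^{-1}(x,y)=(x+1,\,y+k_x+m_{x+1}-m_x)$, which is exactly your formula. Second, rename the components of $n$. As written, $\rho_1$ denotes both the component of $n$ at $x=1$ and the standard cycle $y\mapsto y+1$.
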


We can apply Cartan's double coset formula in our setting.
By Lemma \ref{0Lem}, since $L$ is an elementary abelian $p$-group, we have $$\tr_{L \rightarrow g H g^{-1} \cap L} = 0$$ if $g H g^{-1} \cap L \neq L$, and $$\tr_{L \rightarrow g H g^{-1} \cap L} = \id$$ if $g H g^{-1} \cap L = L$. Moreover, $g H g^{-1} \cap L = L$ is equivalent to $g^{-1} L g \leq H$. If we let $E := g^{-1} L g$, then by Lemma \ref{conjLem}, there is $h \in H$ such that $h E h^{-1} = L$. Set $n := gh^{-1}$, then $$n^{-1} L n = h E h^{-1} = L.$$ Hence, $n \in N_G(L)$. Also, $n = g h^{-1} \in g H$, so $LnH=LgH$. Therefore the double coset $LgH$ has a representative in $N_G(L)$. Replacing $g$ by this representative, we may assume from now on that $g \in N_G(L)$ and
the local term in Cartan's formula reduces to the composition of 
$$
H_*(L)\xrightarrow{(c_{g^{-1}})_*}H_*(L)\xrightarrow{i_{L\rightarrow H}}H_*(H),$$
where $i_{L \rightarrow H}$ factors through $K$:
$$H_*(L) \xrightarrow{(\iota_{\Delta})_*} H_*(K) \xrightarrow{i_*} H_*(H).$$

\subsection{The relevant double-coset representatives}
To describe the double coset representative $g$ such that $g \in N_G(L)$, we use the following classification due to Kuhn \cite{Kuhn}.

\begin{Prop}\label{KProp}
    Let $G = \Sigma_{p^2}$, $H = \Sigma_p \wr \Sigma_p$, and $L = C_p \times C_p$. There is a one-to-one correspondence between the double cosets in $L\backslash G/H$ having a representative in $N_G(L)$ and the cosets in $GL(2,\F_p)/\mathbb{L}$:
    $$
    \{LgH \mid g \in N_G(L)\}\subseteq L\backslash G/H
    \longleftrightarrow GL(2,\F_p)/\mathbb{L}.
    $$
where $\mathbb{L}$ is the subgroup of invertible lower-triangular matrices in $GL(2,\F_p)$.
\end{Prop}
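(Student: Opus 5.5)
The plan is to identify $N_G(L)$ with the affine group $\F_p^2\rtimes GL(2,\F_p)$. Since $L$ acts regularly on $\Omega=\F_p^2$, its normalizer in $\Sym(\Omega)$ is the holomorph $L\rtimes \mathrm{Aut}(L)$. Concretely, $N_G(L)=\{x\mapsto Ax+v\mid A\in GL(2,\F_p),\ v\in\F_p^2\}$, with $L$ the translation subgroup. I would prove this directly. If $n$ normalizes $L$, compose with the translation by $-n(0)$ to get an element $n'$ fixing $0$. Conjugation by $n'$ induces an automorphism $A$ of $L\cong\F_p^2$, which is linear since $L$ is elementary abelian. Then $n'(v)=n'(t_v(0))=(n't_vn'^{-1})(0)=t_{Av}(0)=Av$, so $n'=A$.

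Next I would map $GL(2,\F_p)\to\{LgH\mid g\in N_G(L)\}$ by $A\mapsto LAH$. This map is surjective, because any $g\in N_G(L)$ equals $t_vA$ with $t_v\in L$, so $LgH=LAH$. It remains to show that $LAH=LBH$ if and only if $A^{-1}B\in\mathbb{L}$.

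For the key step, compute $N_G(L)\cap H$. An affine map $x\mapsto Ax+v$ preserves the partition into vertical blocks $B_x=\{x\}\times\F_p$ exactly when it sends the line $\{0\}\times\F_p$ to a translate of itself. Because translations preserve the partition anyway, this is the same as $A$ preserving the subspace spanned by $(0,1)^T$. That holds exactly when $A$ is lower triangular (first coordinate of $A(0,1)^T$ equal to $0$). Hence $N_G(L)\cap H=L\rtimes\mathbb{L}$.

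Now suppose $LAH=LBH$, that is, $B=\ell A h$ with $\ell\in L$ and $h\in H$. Then $h=A^{-1}\ell^{-1}B$ lies in $N_G(L)$ and hence in $L\rtimes\mathbb{L}$. Taking linear parts, which is a homomorphism $N_G(L)\to GL(2,\F_p)$ killing $L$, gives $A^{-1}B\in\mathbb{L}$. Conversely, if $A^{-1}B=M\in\mathbb{L}\subseteq H$, then $LBH=LAMH=LAH$. This gives a bijection $GL(2,\F_p)/\mathbb{L}\leftrightarrow\{LgH\mid g\in N_G(L)\}$, with cosets taken as left cosets $A\mathbb{L}$.

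The main obstacle is the bookkeeping of conventions: whether points are column vectors, whether blocks are vertical, and hence whether the stabilizer is lower or upper triangular, and which side the coset is on. I would fix $(x,y)$ as a column vector acted on by left multiplication and check the triangularity against the block $B_0$ explicitly. As a sanity check, the count is $|GL_2|/|\mathbb{L}|=p+1$, matching the identity, the swap $\gamma$, and the $p-1$ shears $\tau^\lambda$ listed in the introduction.
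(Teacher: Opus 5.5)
Your proof is correct. It differs from the paper in that the paper gives no argument for this proposition at all. The paper quotes the statement as Kuhn's classification \cite{Kuhn}. It then separately checks, through the conjugation relations $\gamma\alpha\gamma^{-1}=\beta$ and $\tau^{\lambda}\beta(\tau^{\lambda})^{-1}=\alpha^{\lambda}\beta$, which permutations the matrices $\begin{bmatrix}0&1\\1&0\end{bmatrix}$ and $\begin{bmatrix}1&\lambda\\0&1\end{bmatrix}$ correspond to.

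You prove it directly in three steps:
\begin{enumerate}
\item $N_G(L)$ is the affine group $L\rtimes GL(2,\F_p)$, because $L$ acts regularly on $\Omega$.
\item $N_G(L)\cap H=L\rtimes\mathbb{L}$, because an affine map preserves the vertical blocks exactly when its linear part fixes the line $\langle(0,1)^T\rangle$.
\item $A\mathbb{L}\mapsto LAH$ is well defined and bijective, by taking linear parts.
\end{enumerate}

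What your version buys is that it is self-contained. It also settles conventions the citation leaves to the reader: vertical blocks give lower rather than upper triangular matrices, and the cosets are left cosets $A\mathbb{L}$, indexed by the line $A\langle(0,1)^T\rangle$.

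It also makes the paper's later choice of representatives automatic. Viewed as linear permutations of $\Omega$, $e$, $\gamma$ and $\tau^{\lambda}$ send $(0,1)^T$ to the $p+1$ distinct lines $\langle(0,1)^T\rangle$, $\langle(1,0)^T\rangle$, $\langle(\lambda,1)^T\rangle$. Moreover, for linear $A$ the induced automorphism $t_v\mapsto At_vA^{-1}=t_{Av}$ of $L$ is $A$ itself, which is exactly the matching the paper verifies by hand.

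The citation, in exchange, is shorter and places the statement inside Kuhn's more general double-coset framework for transfers in symmetric groups.
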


\begin{Rem}\label{KLem}
    Note that $| GL(2, \F_p)| = (p^2-1)(p^2-p)$ and $|\mathbb{L}| = (p-1)^2p$, so $|GL(2, \F_p)/\mathbb{L}| = p+1$, which means that there are $p+1$ double coset representatives in $L \backslash G / H$ such that they are in the normalizer of $L$ in $G$.
\end{Rem}
For $\lambda = 1, 2, \dots, p-1$, the matrix representations in $GL(2, \F_p)/ \mathbb{L}$ are $$\begin{bmatrix}
1 & 0 \\
0 & 1 
\end{bmatrix}, \begin{bmatrix}
0 & 1 \\
1 & 0 
\end{bmatrix},\begin{bmatrix}
1 & \lambda \\
0 & 1 
\end{bmatrix}.$$  
They correspond to the identity element $e$, the ``swap'' element $\gamma$ and the ``shear'' elements $\tau^{\lambda}$, where $\gamma(x,y) = (y,x)$ and $\tau^{\lambda}(x,y) = (x + \lambda y, y)$. 
Specifically, the matrix 
$\begin{bmatrix} 0 & 1 \\ 1 & 0 \end{bmatrix}$ 
acts on basis vectors by:
$$\begin{bmatrix} 1 \\ 0 \end{bmatrix} \mapsto \begin{bmatrix} 0 \\ 1 \end{bmatrix} \quad \text{and} \quad \begin{bmatrix} 0 \\ 1 \end{bmatrix} \mapsto \begin{bmatrix} 1 \\ 0 \end{bmatrix}.$$
In $G$, we observe the conjugation relations $\gamma \alpha \gamma^{-1} = \beta$ and $\gamma \beta \gamma^{-1} = \alpha$. Thus, under Kuhn's correspondence, this matrix maps uniquely to the swap element $\gamma \in G$.
Similarly, the matrix 
$\begin{bmatrix} 1 & \lambda \\ 0 & 1 \end{bmatrix}$ 
acts by:
$$\begin{bmatrix} 1 \\ 0 \end{bmatrix} \mapsto \begin{bmatrix} 1 \\ 0 \end{bmatrix} \quad \text{and} \quad \begin{bmatrix} 0 \\ 1 \end{bmatrix} \mapsto \begin{bmatrix} \lambda \\ 1 \end{bmatrix}.$$
This matrix corresponds to the shear element $\tau^{\lambda}$, which satisfies the conjugation relations $\tau^{\lambda} \alpha (\tau^{\lambda})^{-1} \\ =\alpha$ 
and 
$\tau^{\lambda} \beta (\tau^{\lambda})^{-1} = \alpha^{\lambda} \beta$.

Combining the arguments following Lemma \ref{conjLem} with Proposition \ref{KProp}, we can reduce the original double-coset sum to the following form:

\begin{Cor}\label{doublecosetCor}
    Let $X \in H_*(C_p \times C_p)$, set $x=(\iota_{\Delta})_*(X) \in H_*(C_p \wr C_p)$, and set $z=i_*(x) \in H_*(\Sigma_{p^2})$. Then
    $$\tr_{\Sigma_{p^2} \rightarrow \Sigma_p \wr \Sigma_p}(z)
    = \sum_{g \in S} i_* \circ (\iota_{\Delta})_* \circ (c_{g^{-1}})_*(X),$$
    where $S = \{e, \gamma, \tau^{\lambda} \mid \lambda \in \F_p^{\times} \}$. Equivalently, the following diagram is commutative:
    \[\begin{tikzcd}
	{H_*(C_p \times C_p)} & {H_*(C_p \wr C_p)} & {H_*(\Sigma_p \wr \Sigma_p)} & {H_*(\Sigma_{p^2})} \\
	\\
	{H_*(C_p \times C_p)} && {H_*(C_p \times C_p)} & {H_*(\Sigma_p \wr \Sigma_p)}
	\arrow["{(\iota_{\Delta})_*}", from=1-1, to=1-2]
	\arrow["\id"', from=1-1, to=3-1]
	\arrow["{i_*}", from=1-2, to=1-3]
	\arrow["{i_*}", from=1-3, to=1-4]
	\arrow["\tr", from=1-4, to=3-4]
	\arrow["{\sum_{g \in S}(c_{g^{-1}})_*}"', from=3-1, to=3-3]
	\arrow["{i_* \circ (\iota_{\Delta})_*}"', from=3-3, to=3-4]
\end{tikzcd}\]
\end{Cor}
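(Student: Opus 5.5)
The plan is to apply Cartan's double-coset formula (Proposition \ref{doublecosetProp}) with $G=\Sigma_{p^2}$, $H=\Sigma_p\wr\Sigma_p$, and the subgroup $L=C_p\times C_p$ in the role of the second subgroup. The indices are finite, so it gives
$$\tr_{G\to H}(z)=\tr_{G\to H}\circ i_{L\to G}(X)=\sum_{g\in E} i_{H\cap g^{-1}Lg\to H}\circ(c_{g^{-1}})_*\circ\tr_{L\to gHg^{-1}\cap L}(X).$$
Here $E$ is a set of representatives for $L\backslash G/H$, and we use that $i_{L\to G}$ factors as $i_*\circ i_*\circ(\iota_\Delta)_*$ through $K$ and $H$. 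So $z=i_{L\to G}(X)$.

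First I discard the vanishing terms. For a representative $g$ with $gHg^{-1}\cap L$ a proper subgroup of $L$, Lemma \ref{0Lem} gives $\tr_{L\to gHg^{-1}\cap L}=0$, since $L$ is elementary abelian. The surviving double cosets are those with $g^{-1}Lg\le H$. By the argument following Lemma \ref{conjLem}, each such coset $LgH$ has a representative $n\in N_G(L)$. For that representative, $H\cap n^{-1}Ln=L$ and the transfer is the identity. The local term is therefore $i_{L\to H}\circ(c_{n^{-1}})_*(X)$, and $i_{L\to H}=i_*\circ(\iota_\Delta)_*$ by the tower in Subsection \ref{subgrouptowersubsection}. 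Changing the representative within the same double coset does not change the local term, because Cartan's formula is independent of the choice of representatives. I can therefore choose representatives freely within $N_G(L)$.

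Next I enumerate the surviving double cosets using Proposition \ref{KProp}. Double cosets with a representative in $N_G(L)$ correspond bijectively to $GL(2,\F_p)/\mathbb{L}$, which has $p+1$ elements by Remark \ref{KLem}. A complete set of coset representatives for $GL(2,\F_p)/\mathbb{L}$ is the identity, the antidiagonal swap, and the matrices $\begin{bmatrix}1&\lambda\\0&1\end{bmatrix}$ for $\lambda\in\F_p^\times$. To see this, classify a matrix by its first row modulo right multiplication by lower-triangular matrices: the line spanned by its first row determines the coset. The elements $e,\gamma,\tau^\lambda$ realize these matrices through their conjugation action on $L=\langle\alpha,\beta\rangle$, as computed above, and they normalize $L$. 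Hence $S=\{e,\gamma,\tau^\lambda\}$ is a complete set of normalizer representatives of the surviving double cosets. Summing the local terms over $S$ gives the displayed formula, and the commutativity of the diagram is just a restatement of it.

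The main point requiring care is the bookkeeping in Kuhn's correspondence. One must check that it really sends the class of the matrix of $c_g$ (or of $c_{g^{-1}}$, depending on convention) to $LgH$. One must also check that the $p+1$ listed matrices lie in distinct cosets of $\mathbb{L}$, and not in distinct cosets for the transposed or inverse convention. I would verify this directly. For $g\in N_G(L)$, the double coset $LgH$ is determined by which subgroup of $L$ maps, under conjugation by $g^{-1}$, onto $\langle\beta\rangle$, the block-preserving part of $L\cap H$. With this description the $p+1$ lines $\langle\beta\rangle,\langle\alpha\rangle,\langle\alpha^{\lambda}\beta\rangle$ (up to the appropriate inverse) are visibly distinct and exhaust all lines of $\F_p^2$.
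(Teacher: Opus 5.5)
Your proposal is correct and is essentially the paper's argument: Cartan's formula, Lemma \ref{0Lem} to kill the terms with $gHg^{-1}\cap L\neq L$, Lemma \ref{conjLem} to move the surviving double cosets to representatives in $N_G(L)$, and Proposition \ref{KProp} to identify those $p+1$ double cosets with $e,\gamma,\tau^\lambda$; your last paragraph adds a useful intrinsic check of the representatives. One small slip: right multiplication by $\mathbb{L}$ preserves the line spanned by the second column $Me_2$, not the first row (e.g.\ the swap matrix times $\begin{bmatrix}1&0\\1&1\end{bmatrix}$ has first row $(1,1)$), so that sentence should use the second column instead, which is exactly your invariant $g\langle\beta\rangle g^{-1}$ and still shows that the listed $p+1$ matrices form a complete set of representatives.
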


\subsection{Homology of wreath products and the standard diagonal embedding}
In this section, we provide a formula for the standard diagonal embedding map $(\iota_{\Delta})_*$ from the previous section. To do this, we must first describe the basis elements in the homology of the wreath product.

First, it is a well-known fact that over $\F_p$, $H_i(C_p ; \F_p) \cong \F_p$ with generator $e_i$ for $i \geq 0$. Next, if $H_*(H)$ has the basis $x_0 = 1, x_1, x_2, \dots$ over $\F_p$, then $H_*(C_p \wr H)$ has the basis over $\F_p$ (\cite[Prop 2.2]{DL}):
\begin{align}\label{wrbasis}
    & e_i \otimes x_j \otimes x_j \otimes \dots \otimes x_j = e_i \otimes (x_j)^{\otimes p}  \ (i, j \geq 0),\\
    & e_0 \otimes x_{i_1} \otimes x_{i_2} \otimes \dots \otimes x_{i_p} \ (i_k \neq i_l \ \text{for some} \ k, l),
\end{align}
where $(i_1, \dots,i_p)$ runs through each representative of the classes obtained by cyclic permutations of the indices. We call the elements in the first family pure wreath tensors and those in the second family mixed wreath tensors.

We define the coefficients appearing in May's formula as follows. For integers $j$ and $k$, set
\begin{equation}\label{tEqu}
    t(j,k) = (-1)^k v(j) \binom{\lfloor j/2 \rfloor - k(p-1)}{k},
\end{equation}
where
\[
\delta(j) =  \begin{cases} 
      1 & j \ \text{odd}, \\
      0 & j \ \text{even}.
   \end{cases}
\]
and
\begin{equation}\label{vEqu}
    v(j) = (-1)^{\frac{j(j-1)(p-1)}{4}}(\frac{p-1}{2}!)^j
\end{equation}
with $v(2j + \epsilon) \equiv (-1)^j (\frac{p-1}{2}!)^{\epsilon} \pmod p$ and $\epsilon \in \{0, 1\}$. Note that Wilson's theorem implies $(\frac{p-1}{2}!)^2 \equiv (-1)^{\frac{p-1}{2}+1} \pmod p$.

The following formula for the standard diagonal embedding map $(\iota_{\Delta})_*$ is due to Peter May.

\begin{Lem}[{\cite[Lem 4.6]{May}}]\label{Mayformula}
    $H_*(C_p \wr C_p) \cong H_*(C_p; H_*(C_p)^{\otimes p})$ and the formula for $(\iota_{\Delta})_*: H_*(C_p \times C_p) \rightarrow H_*(C_p \wr C_p)$ is given by
    \begin{align*}
    e_x \otimes e_y \mapsto & \sum_{k \geq 0} t(y,k) \ e_{x + (2pk-y)(p-1)} \otimes e^{\otimes p}_{y - 2k(p-1)} \\
    &- \delta(x) \delta(y-1)
    \sum_{k \geq 0}t(y-1,k) \ e_{x + p + (2pk-y)(p-1)} \otimes e^{\otimes p}_{y-2k(p-1)-1},
    \end{align*}
    where we declare $e_q  = 0$ for $q < 0$.
\end{Lem}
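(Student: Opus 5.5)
The plan is to view $\iota_{\Delta}$ as the restriction of the ``Steenrod diagonal'' $C_p\times C_p\to C_p\wr C_p$, $(a,b)\mapsto(\sigma_a;\rho_b,\dots,\rho_b)$. This map factors as $C_p\times C_p\xrightarrow{\id\times\Delta} C_p\times (C_p)^p$, where $C_p$ acts trivially on the diagonal. Then $H_*(\iota_\Delta)$ is exactly the map used to define the Steenrod operations on $H^*(BC_p)$, in its dual homological form.

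\textbf{Chain-level identification.} Let $W$ be the standard periodic resolution of $\F_p$ over $\F_p[C_p]$, and let $C=C_*(BC_p)$. On chains, $\iota_\Delta$ is modeled by
\[
W\otimes C\longrightarrow W\otimes_{C_p} C^{\otimes p},\qquad w\otimes c\mapsto w\otimes c^{\otimes p}
\]
composed with a $C_p$-equivariant diagonal $W\otimes C\to C^{\otimes p}$. This gives $H_*(C_p\wr C_p)\cong H_*(C_p;H_*(C_p)^{\otimes p})$ with the basis (\ref{wrbasis}). Only pure tensors $e_i\otimes e_j^{\otimes p}$ can occur in the image, since the image is $C_p$-invariant and built from a diagonal.

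\textbf{Expansion via dual Steenrod operations.} Dualizing the defining formula $P(u)=\sum_i e^i\otimes D_i(u)$, one gets (as in May's general algebraic setup for Steenrod operations)
\[
(\iota_\Delta)_*(e_x\otimes y)=\sum_{k,\epsilon} c_{k,\epsilon}\,e_{x+(q-\deg)(p-1)\cdots}\otimes\bigl(\beta^{\epsilon}P^k_*\,y\bigr)^{\otimes p}.
\]
The degree shift of the $e$-factor is forced by degree counting, and $v(j)$ is the normalization constant relating $D_i$ to $P^k$, with $v(2j+\epsilon)=(-1)^j((p-1)/2!)^\epsilon$. The same constant appears in May's definition of $P^k$ via $D_{(2k-q)(p-1)}$.

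\textbf{Computing on $H_*(BC_p)$.} Insert the known dual action on $H_*(BC_p)$. Cohomology is $E(u)\otimes\F_p[v]$, and $P^k v^m=\binom{m}{k}v^{m+k(p-1)}$, with $P^k$ acting trivially on $u$ except as a derivation. Dualizing gives
\[
P^k_*e_{2m+\epsilon}=\binom{m-k(p-1)}{k}e_{2m+\epsilon-2k(p-1)}.
\]
Together with the sign $(-1)^k$ from the convention, this yields $t(y,k)$. The Bockstein $\beta_*e_{2m}=e_{2m-1}$ contributes the second sum. That sum is present only when $x$ is odd and $y$ is even, because only then does the Bockstein term survive in the top-degree part of $W$, producing the $\delta(x)\delta(y-1)$ factor and the extra shift by $p$.

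\textbf{Main obstacle.} The main difficulty will be sign and normalization bookkeeping: matching $v(j)$, the $(-1)^k$, and the index $x+(2pk-y)(p-1)$ exactly with May's conventions. I would fix these first by checking the cases $y=0$ and $k=0$, and then induct using the Cartan formula in $y$.
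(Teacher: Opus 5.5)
Your overall strategy is the right one. $\iota_\Delta$ is exactly the map $BC_p\times BC_p\to EC_p\times_{C_p}(BC_p)^p$ used to define Steenrod operations for $X=BC_p$. Dualizing the known operations on $H^*(BC_p)=\Lambda[u]\otimes\F_p[v]$ is the standard way to derive this formula; the paper gives no argument of its own and simply cites May. Your computation $P^k_*e_{2m+\epsilon}=\binom{m-k(p-1)}{k}e_{2m+\epsilon-2k(p-1)}$ is correct and accounts for the binomial in $t(y,k)$. However, as written the sketch leaves the formula undetermined exactly where it has content, and one step is justified incorrectly.

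\textbf{The chain model.} The map $w\otimes c\mapsto w\otimes c^{\otimes p}$ is not even linear in $c$. The standard model is $(1\otimes\psi)(\Delta_W\otimes 1)$, with $\psi\colon W\otimes C\to C^{\otimes p}$ an equivariant chain map extending the iterated diagonal.

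\textbf{How $e_x$ enters, and where $\delta(x)$ comes from.} Your expansion never says how $e_x$ enters, hence the ``$\cdots$'' in its index. The missing ingredient is the cup product on $H^*(BC_p)$. Since $\iota_\Delta$ followed by the projection to the top $C_p$ is the first projection, $\iota_\Delta^*\bigl(w_j\cdot P(\xi)\bigr)=\sum_i w_jw_i\otimes D_i(\xi)$. So, up to sign conventions, the coefficient of $e_j\otimes e_m^{\otimes p}$ in $(\iota_\Delta)_*(e_x\otimes e_y)$ is $\sum_i\langle w_jw_i,e_x\rangle\langle D_i(\xi_m),e_y\rangle$. This is what produces $\delta(x)$. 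The Bockstein terms come from $D_i$ with $i$ odd, and $w_jw_i=0$ when $i,j$ are both odd (because $u^2=0$). They therefore survive only for $j$ even, i.e.\ for $x=i+j$ odd. The factor $\delta(y-1)$ has a separate source: $\beta P^k$ is nonzero on $H^*(BC_p)$ only on the classes $uv^a$, whose images have even degree. Your explanation via ``the top-degree part of $W$'' matches neither mechanism.

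\textbf{The constants.} These are the whole content of the lemma, and they cannot be fixed by spot checks at $y=0$ and $k=0$ followed by an induction. The $k=0$ coefficient $v(y)$ is itself the normalization relating $D_{q(p-1)}$ to $P^0=\id$. The constants are dictated directly by the defining relations between $D_{(q-2k)(p-1)}$, $D_{(q-2k)(p-1)-1}$ and $P^k$, $\beta P^k$. Note that the cohomological index is $(q-2k)(p-1)$, not the homological $(2k-q)(p-1)$ you wrote. Here $q$ is the degree of the \emph{output} class $\xi_m$, namely $q=y-2k(p-1)$ or $q=y-1-2k(p-1)$. The lemma can be written with $v(y)$ and $v(y-1)$ only because $v(2a+\epsilon)=(-1)^a(\frac{p-1}{2}!)^\epsilon$ depends only on $\epsilon$ and the parity of $a$, and the shift $k(p-1)$ in $a$ is even.

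\textbf{Absence of mixed tensors.} The claim that only pure tensors occur needs a real argument. One option: the classes dual to mixed tensors are transfers from the base group $(C_p)^p$. By the double coset formula, their restriction along $\iota_\Delta$ is a transfer from the proper subgroup $\iota_\Delta(1\times C_p)$ of the elementary abelian group $\iota_\Delta(C_p\times C_p)$, and such a transfer vanishes mod $p$.
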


\subsection{Lower and upper Dyer--Lashof indices}
Dyer--Lashof operations provide a framework for understanding the homology of symmetric groups. We begin by recalling the standard definitions of the lower and upper indexing conventions.

\begin{Def}\label{lowerindexDLDef}
    For $i \geq 0$,
    let $Q_{i(p-1)}:H_{n}(\Sigma_g; \F_p) \rightarrow H_{i(p-1)+pn}(\Sigma_{pg}; \F_p) $ be the homology operation determined by the image of $e_{i(p-1)} \in H_{i(p-1)}(C_p; \F_p)$ defined by
    $$x \in H_{n}(\Sigma_g; \F_p) \mapsto i_*(e_{i(p-1)} \otimes x^{\otimes p}) \in H_{i(p-1)+pn}(\Sigma_{pg}; \F_p),$$ where
    $i_*$ is the map induced by the inclusion $\Sigma_p \wr \Sigma_g \leq \Sigma_{pg}$. Similarly, for $i \geq 1$, we let $\beta Q_{i(p-1)}$ denote the homology operation determined by $e_{i(p-1)-1} \in H_{i(p-1)-1}(C_p ; \F_p)$.
\end{Def}

Because $H_i(C_p) \neq 0$ for all $i \geq 0$, but the same is not true for $H_i(\Sigma_p)$, it is often more convenient to use upper indexing for the Dyer--Lashof operations.

\begin{Def}\label{upperindexDLDef}
    Let $Q^s: H_n(\Sigma_g; \F_p) \rightarrow H_{n + 2s(p-1)}(\Sigma_{pg}; \F_p)$ be given by 
    $$Q^s = (-1)^{s}v(n) Q_{(2s-n)(p-1)},$$
    where $v(n)$ is from \ref{vEqu}. Note that if $2s - n < 0$, $Q_{(2s-n)(p-1)}$ is defined to be $0$. Similarly, we let
    $\beta Q^s: H_n(\Sigma_g; \F_p) \rightarrow H_{n + 2s(p-1)-1}(\Sigma_{pg}; \F_p)$ be given by
    $$\beta Q^s = (-1)^{s}v(n) \beta Q_{(2s-n)(p-1)},$$
    where $ \beta Q_{(2s-n)(p-1)} = 0$ if $2s - n \leq 0$.
\end{Def}

With these operations established, we can describe an $\F_p$-basis for the homology of symmetric groups.
Consider a sequence
$$I = (\epsilon_1, s_1, \epsilon_2, s_2, \dots, \epsilon_k, s_k)$$
where $\epsilon_i$ is either $0$ or $1$ and $s_i$ is in $\mathbb{Z}$. Any such $I$ determines a word in the Dyer--Lashof operations
$$Q^{I} := \beta^{\epsilon_1} Q^{s_1} \circ \dots \circ \beta^{\epsilon_k} Q^{s_k}.$$
We say such a sequence $I$ is admissible if both of the following conditions hold:
\begin{enumerate}
\item $s_i \in \mathbb{Z}$ for all $1 \leq i \leq k$.
\item $ps_i - \epsilon_i \geq s_{i-1}$ for all $1 < i \leq k$.
\end{enumerate}
We call $l(I) = k$, the length of $I$, and $e(I) : = 2s_1 - \epsilon_1 - \sum_{i = 2}^{k}(2s_i(p-1) - \epsilon_i)$, the excess of $I$. Further, we say a sequence $I$ is strongly admissible if $e(I) + \epsilon_1 > 0$ and $I$ is admissible.

Then, the set of strongly admissible sequences gives a basis for the homology of symmetric groups.
\begin{Thm}[{\cite[Thm 7.13]{Bernard}}]
\label{DLThm}
$H_*(\Sigma_{g}, \F_p)$ is isomorphic to the free commutative algebra on $\{Q^{I}x_0\}$ over $\F_p$, where $x_0 \in H_0(\Sigma_1, \F_p)$, $Q^{I}x_0 \in H_{t}(\Sigma_{p^{l(I)}}, \F_p)$ for $$t = \sum_{i = 1}^{k}(2s_i(p-1) - \epsilon_i),\;\; \sum_{I} p^{l(I)} = g,$$
and $Q^{I}$ ranges over all strongly admissible sequences.
\end{Thm}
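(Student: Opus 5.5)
Theorem~\ref{DLThm} is the classical Nakaoka--Cohen--May computation and is only quoted here, so I would recover it by a surjectivity-plus-counting argument rather than by anything specific to this paper. The strategy is to build an algebra map from the free commutative algebra onto $\bigoplus_g H_*(\Sigma_g;\F_p)$, with each generator $Q^I x_0$ placed in weight $p^{l(I)}$. The map is well defined because $\bigoplus_g H_*(\Sigma_g)$ is a graded-commutative algebra under the Pontryagin product $H_*(\Sigma_a)\otimes H_*(\Sigma_b)\to H_*(\Sigma_{a+b})$ coming from block inclusions. I would then show this map is surjective, and finally show it is injective by comparing dimensions in each bidegree (homological degree, weight $g$).

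\textbf{Step 1: surjectivity, reduced to iterated wreath products.} Write $g=\sum a_k p^k$ in base $p$. A Sylow $p$-subgroup $P$ of $\Sigma_g$ is $\prod_k (C_p\wr\cdots\wr C_p)^{a_k}$, with $k$ iterated factors. By Proposition~\ref{indexProp}, $i_*\circ\tr$ is multiplication by $[\Sigma_g:P]$, which is prime to $p$. Hence $i_*\colon H_*(P)\to H_*(\Sigma_g)$ is surjective. By the K\"unneth theorem, and because block-product inclusions induce the Pontryagin product, it suffices to handle a single iterated wreath product.

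For that, use the wreath basis \eqref{wrbasis} inductively.
\begin{itemize}
\item A mixed tensor $e_0\otimes x_{i_1}\otimes\cdots\otimes x_{i_p}$ maps to a product $x_{i_1}\cdots x_{i_p}$ of lower-weight classes. Its image in $H_*(\Sigma_{p^n})$ is the image of the class $x_{i_1}\otimes\cdots\otimes x_{i_p}$ of $(\Sigma_{p^{n-1}})^p$, since $e_0$ is the basepoint class.
\item A pure tensor $e_i\otimes x^{\otimes p}$ maps to $Q_{i}$ applied to the image of $x$, by Definition~\ref{lowerindexDLDef}. In upper indexing this is $\pm\beta^\epsilon Q^s$ applied to that image, by Definition~\ref{upperindexDLDef}.
\end{itemize}
So the image is spanned by products of arbitrary iterated words $\beta^{\epsilon_1}Q^{s_1}\cdots\beta^{\epsilon_k}Q^{s_k}x_0$.

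\textbf{Step 2: reduction to strongly admissible words.} Three facts cut the spanning set down.
\begin{itemize}
\item The Adem relations rewrite any inadmissible composite as a combination of admissible ones. I would prove them either by the classical $\Sigma_{p^2}$ argument or, circularly-free, by citing \cite{CLM1976}.
\item $Q^s y=0$ when $2s<|y|$.
\item $Q^s y=y^p$ when $2s=|y|$.
\end{itemize}
By the last two, a word of excess $e(I)+\epsilon_1\le 0$ is either zero or a $p$-th power of a shorter generator, hence already decomposable. This establishes surjectivity from the free commutative algebra on the strongly admissible $Q^Ix_0$.

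\textbf{Step 3: injectivity, which I expect to be the main obstacle.} One route is dimension counting. By Barratt--Priddy--Quillen, $\bigsqcup_g B\Sigma_g$ group-completes to $QS^0$, and Nakaoka stability makes each $H_*(\Sigma_g)\to H_*(\Sigma_\infty)$ injective. May's computation gives $H_*(Q_0S^0)$ as a free commutative algebra on the admissible words with positive excess, suitably shifted into the $0$-component. Matching the weight filtration $g$ against component shifts, the Poincaré series in each weight agrees with that of the free algebra. A surjection between finite-dimensional spaces of equal dimension is an isomorphism, which would finish the proof.

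If I wanted to avoid invoking May's $QS^0$ computation, I would instead try to detect the proposed basis on elementary abelian subgroups $(C_p)^n\le\Sigma_{p^n}$, using Dickson invariants. There, linear independence of the top-length words reduces to a triangularity statement for Lemma~\ref{Mayformula} iterated along the tower. The bookkeeping of leading terms in that triangularity argument is where I expect the real difficulty to lie.
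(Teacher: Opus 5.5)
The paper gives no proof of this statement. It quotes it from \cite{Bernard} (the classical computation, cf.\ \cite{CLM1976}) only to fix the basis $\beta^\epsilon Q^r\circ Q^s$ of $H_*(\Sigma_{p^2})$. So you are reconstructing the classical argument rather than varying anything in the paper. Your Steps 1--2 (Sylow transfer, the wreath basis, Adem and unstable relations) are the standard surjectivity argument, but two facts are used there without being stated:
\begin{enumerate}
\item The Cartan formula $Q^s(xy)=\sum_a Q^ax\,Q^{s-a}y$. You need it because the inductive step applies $Q_i$ to images of mixed tensors, i.e.\ to products, including products with powers of $x_0$, on which $Q^b$ with $b>0$ does not vanish.
\item The vanishing of $x\mapsto i_*(e_i\otimes x^{\otimes p})$ on a degree-$n$ class unless $i=(2s-n)(p-1)-\epsilon$. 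Definition~\ref{lowerindexDLDef} does not provide this, and it is what justifies `in upper indexing this is $\pm\beta^\epsilon Q^s$'.
\end{enumerate}

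The genuine gap is in Step 3. Knowing $H_*(\Sigma_\infty)\cong H_*(Q_0S^0)$, together with Nakaoka's injectivity $H_*(\Sigma_g)\hookrightarrow H_*(\Sigma_\infty)$, gives only an upper bound on $\dim H_d(\Sigma_g)$. It says nothing about the weight-$g$ Poincar\'e series. Write $A$ for the free algebra and $A_g$ for its weight-$g$ part. Injectivity of the surjection $A_g\to H_*(\Sigma_g)$ needs a lower bound, so `the Poincar\'e series in each weight agrees' is asserted, not proved.

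The fix uses only what you already have: count dimensions in the colimit instead.
\begin{enumerate}
\item Stabilization $\Sigma_g\le\Sigma_{g+1}$ is multiplication by $x_0=[1]$. Hence $\mathrm{colim}_g A_{g,d}\to\mathrm{colim}_g H_d(\Sigma_g)=H_d(\Sigma_\infty)\cong H_d(Q_0S^0)$ is surjective, as a filtered colimit of surjections.
\item Both sides are finite-dimensional, with equal dimension by the loop-space computation. So this map is an isomorphism.
\item $x_0$ is a polynomial generator of $A$, hence a non-zero-divisor, so $A_g\to\mathrm{colim}_gA_g$ is injective.
\item Commutativity of the square then forces $A_g\to H_*(\Sigma_g)$ to be injective. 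Nakaoka stability is not needed.
\end{enumerate}

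Two further points for Step 3. First, the generators of $H_*(Q_0S^0)$ must be indexed by $e(I)+\epsilon_1>0$, not $e(I)>0$; otherwise the counts will not match. For instance, $\beta Q^{p-1}\beta Q^1x_0$ is strongly admissible with $e(I)=0$. Second, the input must be the Dyer--Lashof--May computation of $H_*(Q_0S^0)$ via iterated loop spaces, not one deduced from symmetric groups, or the argument is circular.

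With these repairs your outline is a correct proof sketch. Its real content lies in the Adem relations and that loop-space computation, which is exactly what the paper's citation packages.
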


\begin{Rem}\label{ImportantRem}
    For our purposes, we examine the basis of $H_*(\Sigma_{p^2})$. It contains elements of the form $\beta^{\epsilon} Q^r \circ Q^s$ for $s(p-1) < r \leq sp$. Note that here we omit writing $x_0$. By definition, 
    $$\beta^{\epsilon} Q^r \circ Q^s = (-1)^{r+s} v\big(2s(p-1)\big)v(0) \ \beta^{\epsilon}Q_{2\big(r-s(p-1)\big)(p-1)} \circ Q_{2s(p-1)},$$ 
    where $\beta ^{\epsilon}Q_{2\big(r-s(p-1)\big)(p-1)} \circ Q_{2s(p-1)} = i_* \big( x(r,s ; \epsilon) \big)$ for 
    $$x(r,s; \epsilon) := e_{2\big(r-s(p-1)\big)(p-1) -\epsilon} \otimes e_{2s(p-1)}^{\otimes p},$$
    and $i_*: H_*(\Sigma_p \wr \Sigma_p) \rightarrow H_*(\Sigma_{p^2})$.
    We define
    $$\beta^{\epsilon} Q^r \wr Q^s : = (-1)^{r+s} v\big(2s(p-1)\big)v(0) \ e_{2\big(r-s(p-1)\big)(p-1) -\epsilon} \otimes e_{2s(p-1)}^{\otimes p},$$
    where we identify the generators in $H_*(C_p)$ with the generators in $H_*(\Sigma_p)$. Since both $\beta^{\epsilon} Q^r \circ Q^s$ and $\beta^{\epsilon} Q^r \wr Q^s$ share the same normalization factor $\nu_{r,s} :=(-1)^{r+s} v\big(2s(p-1)\big)v(0)$, we will only focus on $x(r,s; \epsilon)$ in the computation.
\end{Rem}

\subsection{The conjugation maps}
For the double-coset formula in Corollary \ref{doublecosetCor}, the final missing ingredient is a concrete formula for each conjugation map
$$(c_{g^{-1}})_*: H_*(C_p \times C_p) \rightarrow H_*(C_p \times C_p)$$
where $g \in \{e, \gamma, \tau^{\lambda}\}$ for $\lambda \in \F_p^{\times}$. Note that since $\gamma^{-1} = \gamma$ and $(\tau^{\lambda})^{-1} = \tau^{-\lambda}$, and the map $\lambda \mapsto -\lambda$ simply permutes $\F_p^{\times}$, replacing $g$ with $g^{-1}$ merely reindexes the shear summands. Hence, the total Cartan sum remains unchanged.

To begin, the conjugation map induced by the identity is the identity map; it sends $e_{n} \otimes e_{m} \in H_*(C_p \times C_p)$ to itself.

Recall the generators $\alpha$ and $\beta$ from Subsection \ref{subgrouptowersubsection}. Then, for the conjugation map induced by the swap element, we deploy a result from May.
\begin{Lem}[{\cite[Lem 4.4]{May}}]
\label{swapLem}
    Suppose $c_{\gamma}: C_p \times C_p \rightarrow C_p \times C_p$ is a conjugation map sending $\alpha$ to $\beta$ and $\beta$ to $\alpha$. Then the induced map $(c_{\gamma})_*: H_*(C_p \times C_p) \rightarrow H_*(C_p \times C_p)$ is given by
    $$(c_{\gamma})_*(e_n \otimes e_m) = (-1)^{nm} e_{m} \otimes e_{n}.$$
\end{Lem}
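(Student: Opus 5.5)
The plan is to compute $(c_\gamma)_*$ at the chain level. The conjugation $c_\gamma$ is, up to the identification $L=\langle\alpha\rangle\times\langle\beta\rangle\cong C_p\times C_p$, just the coordinate-swap automorphism $T(a,b)=(b,a)$. So it suffices to lift $T$ to an explicit chain map between free resolutions and read off its effect on the Künneth basis $e_n\otimes e_m$.

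\emph{Step 1: the resolution.} Let $W_*$ be the standard periodic free $\F_p[C_p]$-resolution of $\F_p$. It has one generator $w_i$ in each degree $i\ge 0$, with differential alternating between multiplication by $t-1$ and by $N=1+t+\dots+t^{p-1}$. Then $W_*\otimes W_*$, with the usual Koszul differential
\[
d(x\otimes y)=dx\otimes y+(-1)^{|x|}x\otimes dy,
\]
is a free $\F_p[C_p\times C_p]$-resolution of $\F_p$. Its coinvariants $(W\otimes W)\otimes_{C_p\times C_p}\F_p \cong (W\otimes_{C_p}\F_p)\otimes(W\otimes_{C_p}\F_p)$ have zero differential, because $t-1\mapsto 0$ and $N\mapsto p=0$. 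Consequently the classes $e_n\otimes e_m$ are represented exactly by the chains $w_n\otimes w_m$, and this is the Künneth identification used throughout the paper.

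\emph{Step 2: the lift of $T$.} Define $\tau\colon W\otimes W\to W\otimes W$ by
\[
\tau(x\otimes y)=(-1)^{|x||y|}\,y\otimes x.
\]
Three checks are needed. First, $\tau$ is a chain map for the Koszul differential; this is the standard sign computation. Second, $\tau$ is $T$-equivariant, meaning $\tau((a,b)\cdot z)=(b,a)\cdot\tau(z)$; this holds because the first factor of $C_p\times C_p$ acts on the first tensor factor and the second factor on the second. Third, $\tau$ is the identity on the augmentation in degree $0$. By the comparison theorem, $\tau$ therefore computes $T_*=(c_\gamma)_*$ on homology.

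\emph{Step 3: passing to coinvariants.} Applying $-\otimes_{C_p\times C_p}\F_p$ sends $w_n\otimes w_m$ to $(-1)^{nm}\,w_m\otimes w_n$, which gives the formula $(c_\gamma)_*(e_n\otimes e_m)=(-1)^{nm}e_m\otimes e_n$.

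The main point requiring care is bookkeeping rather than any hard step. One must confirm that the first tensor factor corresponds to $\alpha$ and the second to $\beta$, consistently with the conventions used in Lemma \ref{Mayformula}. One must also confirm that the generators $e_i$ in both factors come from the \emph{same} resolution $W$ with the same choice of generator of $C_p$. This holds because $c_\gamma$ sends $\alpha$ to $\beta$ exactly, not to a power of $\beta$, so no extra scalar (such as a power $\lambda^{\lceil i/2\rceil}$ from the automorphism $t\mapsto t^\lambda$) appears. Once these identifications are fixed, the only sign that arises is the Koszul sign $(-1)^{nm}$.
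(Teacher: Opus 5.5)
Your proposal is correct. The paper gives no proof of this lemma of its own; it simply quotes May. So your chain-level computation is a self-contained substitute rather than a variant of an argument in the text.

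Your three steps are the standard proof that the twist acts on cross products by the Koszul sign. You lift the coordinate swap to $\tau(x\otimes y)=(-1)^{|x||y|}y\otimes x$ on $W\otimes W$. You check that $\tau$ is a chain map, equivariant over the swap, and augmentation-preserving. Then you pass to coinvariants, where the differential vanishes. Each check goes through as you describe.

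Compared with the bare citation, your version fixes conventions the citation leaves implicit. The class $e_n\otimes e_m$ is the class of $w_n\otimes w_m$, with the first factor attached to $\alpha$ and the second to $\beta$. Both factors use the same generator, so no scalar of the form $\lambda^{\lceil i/2\rceil}$ enters.

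Your route is also cleaner than imitating the paper's cohomological proof of Lemma \ref{shearLem}. There one would swap $u_1\leftrightarrow u_2$ and $v_1\leftrightarrow v_2$ and read the sign off from $u_1u_2=-u_2u_1$. That requires tracking the sign conventions in the pairing between cup products and cross products, whereas on chains the sign appears directly.

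Finally, the paper only applies this lemma to classes $e_{2a}\otimes e_{2b}$. The Bockstein case is handled through $\mathcal D(r,s;1)=\beta\,\mathcal D(r,s;0)$. So only the sign-free case of the formula is actually used downstream.
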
 

Finally, for the conjugation map induced by the shear element, we divide the statement into cases based on parity. We first recall a standard fact regarding the cohomology ring structure of $C_p$ over $\F_p$.

\begin{Lem}[{\cite[Chapter 12, Section 7]{CartanEilenberg}}]
\label{CohomologyCpLem}
    $H^*(C_p; \F_p) \cong \wedge[u] \otimes \F_p[v]$ where $|u| = 1$ and $v = \beta (u)$.
\end{Lem}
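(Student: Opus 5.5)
The plan is to compute $H^*(C_p;\F_p)$ from the standard $2$-periodic free resolution of $\F_p$ over $\F_p[C_p]$, and then to determine the cup product structure. Write $C_p=\langle t\rangle$, $N=1+t+\dots+t^{p-1}$, and take the resolution
\[
\cdots\xrightarrow{N}\F_p[C_p]\xrightarrow{t-1}\F_p[C_p]\xrightarrow{N}\F_p[C_p]\xrightarrow{t-1}\F_p[C_p]\xrightarrow{\varepsilon}\F_p\to 0.
\]
Applying $\mathrm{Hom}_{C_p}(-,\F_p)$ turns every differential into multiplication by $0$, since $t-1$ acts trivially and $N$ acts as $p=0$. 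Hence $H^i(C_p;\F_p)\cong\F_p$ in every degree. This is dual to the statement $H_i(C_p)\cong\F_p$ already used in the paper.

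Next I fix the generators. Let $u\in H^1=\mathrm{Hom}(C_p,\F_p)$ be the homomorphism $t\mapsto 1$. Let $v=\beta(u)$, where $\beta$ is the Bockstein of $0\to\F_p\to\mathbb{Z}/p^2\to\F_p\to 0$. The lift $\tilde u\colon t^k\mapsto k\in\mathbb{Z}/p^2$, for $0\le k<p$, has coboundary the carry cocycle $(t^a,t^b)\mapsto \lfloor (a+b)/p\rfloor$. This cocycle is the extension class of $\mathbb{Z}/p^2$ over $C_p$, which is nonzero. Therefore $v$ generates $H^2$. Equivalently, $v$ is the image of a generator under the connecting map of the sequence $\mathbb{Z}\xrightarrow{p}\mathbb{Z}\to\F_p$, composed with the isomorphism $H^2(C_p;\mathbb{Z})\cong\mathbb{Z}/p$.

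The main step is the multiplicative structure. I need to show that cup product with $v$ gives an isomorphism $H^i\to H^{i+2}$ for all $i$, and that $u^2=0$. For the first claim I will construct an explicit diagonal approximation $\Delta\colon P\to P\otimes P$ on the periodic resolution, and check on the generators $e_i$ that $e_{2i}^\ast\smile e_{2j}^\ast=e_{2i+2j}^\ast$ and $e_{2i}^\ast\smile e_1^\ast=e_{2i+1}^\ast$, up to units. This is the standard formula in Cartan--Eilenberg, Chapter XII, \S 7. Alternatively, I can avoid the explicit diagonal: cup product with the generator of $H^2$ realizes the periodicity isomorphism of Tate cohomology for a cyclic group. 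I expect writing down and verifying the diagonal approximation to be the most laborious part, so I will use that periodicity argument if the explicit check becomes unwieldy.

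For $u^2=0$ the argument is short: graded commutativity gives $u^2=-u^2$, and $p$ is odd, so $2u^2=0$ forces $u^2=0$. Putting these together, the classes $v^k$ and $uv^k$ form an $\F_p$-basis in degrees $2k$ and $2k+1$. The ring map $\wedge[u]\otimes\F_p[v]\to H^*(C_p;\F_p)$ is therefore a bijection on bases in each degree, and hence an isomorphism.
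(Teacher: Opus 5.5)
Your proposal is correct: the periodic resolution gives $H^i(C_p;\F_p)\cong\F_p$, and the carry-cocycle computation shows $v=\beta(u)\neq 0$. Cup product with $v$ is an isomorphism in each degree (via an explicit diagonal approximation or Tate periodicity), and graded commutativity with $p$ odd forces $u^2=0$; the paper gives no proof of its own but cites Cartan--Eilenberg, Chapter XII, \S 7, which is essentially the argument you outline.
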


For $i \in \{1, 2\}$, let $p_i: C_p \times C_p \rightarrow C_p$ be the projection map onto the $i$-th coordinate.  We then have $H^*(C_p \times C_p) \cong \wedge[u_1,u_2] \otimes \F_p[v_1,v_2]$ where $u_i = p_i^*(u)$ and $v_i = p_i^*(v)$.

\begin{Lem}\label{shearLem}
    Let $c_{\tau^{\lambda}}: C_p \times C_p \rightarrow C_p \times C_p$ be the conjugation map that sends $\alpha \mapsto \alpha$ and $\beta \mapsto \alpha^{\lambda} \beta$ for $\lambda \in \{1, 2, \dots, p-1\}$. Then the induced map $(c_{\tau^{\lambda}})_*: H_*(C_p \times C_p) \rightarrow H_*(C_p \times C_p)$ acts as follows:
    \begin{enumerate}
        \item $e_{2n} \otimes e_{2m} \mapsto \sum_{k=0}^m \binom{n+k}{k} \lambda^k e_{2(n+k)} \otimes e_{2(m-k)}$;
        \item $e_{2n+1} \otimes e_{2m} \mapsto \sum_{k=0}^m \binom{n+k}{k} \lambda^k e_{2(n+k)+1} \otimes e_{2(m-k)}$;
        \item $e_{2n+1} \otimes e_{2m+1} \mapsto \sum_{k=0}^m \binom{n+k}{k} \lambda^k e_{2(n+k)+1} \otimes e_{2(m-k)+1}$; and
        \item $e_{2n} \otimes e_{2m+1} \mapsto \sum_{k=0}^m \binom{n+k}{k} \lambda^k e_{2(n+k)} \otimes e_{2(m-k)+1} + \sum_{k=0}^{m} \binom{n+k}{k} \lambda^{k+1} e_{2(n+k)+1} \otimes e_{2(m-k)}.$
    \end{enumerate}
\end{Lem}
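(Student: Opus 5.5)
Since $c_{\tau^\lambda}$ is a group homomorphism $C_p\times C_p\to C_p\times C_p$, I will compute $(c_{\tau^\lambda})_*$ by dualizing from cohomology. On $H^1$, the dual map is determined by $\alpha\mapsto\alpha$ and $\beta\mapsto\alpha^\lambda\beta$. With $u_1,u_2$ dual to $\alpha,\beta$, a homomorphism $\phi\colon L\to\F_p$ pulls back to $\phi\circ c_{\tau^\lambda}$. This gives $c_{\tau^\lambda}^*(u_1)=u_1$ and $c_{\tau^\lambda}^*(u_2)=\lambda u_1+u_2$. Since $c^*$ commutes with the Bockstein and $v_i=\beta(u_i)$, we also get $c^*(v_1)=v_1$ and $c^*(v_2)=\lambda v_1+v_2$. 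Because $c^*$ is a ring map, Lemma~\ref{CohomologyCpLem} and the Künneth isomorphism then determine $c^*$ on all monomials $u_1^{a}v_1^{n}u_2^{b}v_2^{m}$ with $a,b\in\{0,1\}$:
\[
c^*(u_1^{a}v_1^{n}u_2^{b}v_2^{m})=u_1^{a}v_1^{n}(\lambda u_1+u_2)^{b}\sum_{k=0}^{m}\binom{m}{k}\lambda^{k}v_1^{k}v_2^{m-k}.
\]

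Next I will transpose this matrix. I take $e_{2n+a}\otimes e_{2m+b}$ to be the basis dual to $u_1^a v_1^n\otimes u_2^b v_2^m$, with signs fixed so that Künneth is compatible, and up to the normalization of the $e_i$. Then the coefficient of $e_{2n'+a'}\otimes e_{2m'+b'}$ in $c_*(e_{2n+a}\otimes e_{2m+b})$ is the coefficient of $u_1^a v_1^n u_2^b v_2^m$ in $c^*(u_1^{a'}v_1^{n'}u_2^{b'}v_2^{m'})$. Dualizing a polynomial substitution in this way produces the binomial $\binom{n+k}{k}$ rather than $\binom{m}{k}$. Concretely, the monomial $v_1^{n'}v_2^{m'}$ with $n'=n+k$ and $m'=m-k$ hits $v_1^{n}v_2^{m}$ only when its $v_1$-degree is raised. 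So the target $v$-exponents satisfy $n'+m'=n+m$ with $n'\ge n$. Writing $n'=n-k$ would give the wrong direction, so I will sort this out carefully by running the transpose in the correct order. The claimed formula pushes degree from the second factor to the first, which matches the fact that $c^*$ pushes degree from $v_2$ to $v_1$.

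For the terms with $a'=a$ and $b'=b$ the $u$-factors match trivially, giving cases (1)–(3). Case (3) also uses $u_1\cdot u_1=0$, so the $\lambda u_1$ part of $(\lambda u_1+u_2)$ dies when $a=1$. Case (4), where $a=0$ and $b=1$, picks up the extra term from $\lambda u_1$ in $c^*(u_2)$: the monomial $u_1 v_1^{n'}v_2^{m'}$ contributes to $e_{2(n+k)+1}\otimes e_{2(m-k)}$ with an additional factor $\lambda$.

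The main obstacle is the bookkeeping in this step: matching the binomials $\binom{m}{k}$ from the cohomology side to the stated $\binom{n+k}{k}$. I expect the cleanest route is to note that the dual of $H^*(C_p)$ is a divided power coalgebra. Products of $v$'s in cohomology then correspond to divided powers in homology, and the coefficient becomes $\binom{n+k}{k}$, since $v_1^{n}\cdot v_1^{k}=v_1^{n+k}$ dualizes to $\gamma_{n+k}\mapsto\binom{n+k}{k}\gamma_n\otimes\gamma_k$. I will also check the Koszul signs from the odd classes $u_i$ when transposing. This is delicate mainly in case (4), where a $u_1$ moves past $v$'s; the $v$'s are even, so no sign should appear.
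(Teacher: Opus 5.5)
The gap is in your first step, and everything after it inherits the error. Since $c_{\tau^\lambda}(\alpha^a\beta^b)=\alpha^{a+\lambda b}\beta^b$, the pullback $c^*(u_1)=u_1\circ c_{\tau^\lambda}$ is the homomorphism $\alpha^a\beta^b\mapsto a+\lambda b$. Thus $c^*(u_1)=u_1+\lambda u_2$ and $c^*(u_2)=u_2$, and hence $c^*(v_1)=v_1+\lambda v_2$ and $c^*(v_2)=v_2$. Your formulas $u_1\mapsto u_1$, $u_2\mapsto\lambda u_1+u_2$ have the shape of $c_*$ on $H_1(L)\cong L$, i.e.\ they are the transpose of $c^*$. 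Feed them into the transpose rule you state (which is correct), and you get $c_*(e_1\otimes e_0)=e_1\otimes e_0+\lambda\,e_0\otimes e_1$, which contradicts $c_{\tau^\lambda}(\alpha)=\alpha$. In general you get $c_*(e_{2n}\otimes e_{2m})=\sum_{k=0}^{n}\binom{m+k}{k}\lambda^k e_{2(n-k)}\otimes e_{2(m+k)}$, with the extra mixed-parity term attached to inputs $e_{2n+1}\otimes e_{2m}$ rather than to case (4). This is exactly the ``wrong direction'' you noticed. It cannot be fixed by running the transpose ``in the correct order'', since the transpose rule is not a choice. Your consistency check (``$c^*$ pushes degree from $v_2$ to $v_1$, which matches'') is also backwards, because transposing reverses direction. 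Your account of case (4) fails for the same reason: the $\lambda u_1$ in your $c^*(u_2)$ produces monomials $u_1v_1^{i}v_2^{l}$, which are dual to inputs $e_{\mathrm{odd}}\otimes e_{\mathrm{even}}$, not to $e_{2n}\otimes e_{2m+1}$.

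The divided-power remedy is misstated, and in any case unnecessary. The cup product $v_1^nv_1^k=v_1^{n+k}$ has coefficient $1$, so it dualizes to a coproduct term $\gamma_{n+k}\mapsto\gamma_n\otimes\gamma_k$ with coefficient $1$. The factor $\binom{n+k}{k}$ lives instead in the Pontryagin product $\gamma_n\gamma_k=\binom{n+k}{k}\gamma_{n+k}$, which is dual to the cohomology \emph{coproduct} in which $v$ is primitive. With the correct $c^*$, the binomial appears directly. The coefficient of $e_{2n'}\otimes e_{2m'}$ in $c_*(e_{2n}\otimes e_{2m})$ is the coefficient of $v_1^nv_2^m$ in $(v_1+\lambda v_2)^{n'}v_2^{m'}$, which is nonzero only for $n'=n+k$, $m'=m-k$, where it equals $\binom{n+k}{k}\lambda^k$. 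Cases (2) and (3) follow in the same way, using $c^*(u_2)=u_2$ and $c^*(u_1u_2)=(u_1+\lambda u_2)u_2=u_1u_2$. The extra term in case (4) is the coefficient $\binom{n+k}{k}\lambda^{k+1}$ of $v_1^nu_2v_2^m$ in $(u_1+\lambda u_2)(v_1+\lambda v_2)^{n+k}v_2^{m-k}$, coming from the $\lambda u_2$ in $c^*(u_1)$. This corrected argument is exactly the paper's proof. Your degree-one formulas are right if read as formulas for $c_*$. One could then argue purely in homology: write $e_{2n}\otimes e_{2m}=\gamma_n(x)\gamma_m(y)$ and use that $c_*$ preserves Pontryagin divided powers with $x\mapsto x$, $y\mapsto\lambda x+y$. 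That route requires establishing those structures, which your proposal does not do.
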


\begin{proof}
    Let $\phi = c_{\tau^{\lambda}}$. We first show that $\phi^*: H^*(C_p \times C_p) \rightarrow H^*(C_p \times C_p)$ sends
    \begin{enumerate}
        \item $v_1 \mapsto v_1 + \lambda v_2$;
        \item $v_2 \mapsto v_2$;
        \item $u_1 \mapsto u_1 + \lambda u_2$; and
        \item $u_2 \mapsto u_2$.
    \end{enumerate}
    Since $p_2 \circ \phi = p_2$, we have $\phi^*(u_2) = \phi^*\circ p_2^* (u) = p_2^* (u) = u_2$. 
    Define $\nabla = p_1 \circ \phi: C_p \times C_p \rightarrow C_p$, then $\nabla^*:H^*(C_p) \rightarrow H^*(C_p \times C_p)$ sends $u$ to $c_1 u_1 + c_2u_2$. Since $\nabla \circ i_1 = \id$, $p_1 \circ i_1 = \id$ and $p_2 \circ i_1 = 0$, we have $u = i_1^* \circ \nabla^* (u) = c_1 i_1^* (u_1) + c_2 i_1^* (u_2) = c_1i_1^* \circ p_1^* (u) + c_2 i_1^* \circ p_2^* (u) = c_1u$, which means $c_1 = 1$. Similarly, by $\nabla \circ i_2 = \lambda \cdot \id$, $p_2 \circ i_2 = \id$ and $p_1 \circ i_2 = 0$, we have $c_2 = \lambda$. Hence, $\phi^*(u_1) = u_1 + \lambda u_2.$ Now, we have $\phi^*(v_1) = \phi^*(\beta(u_1)) = \beta (\phi^*(u_1)) = \beta (u_1 + \lambda u_2 )=v_1 + \lambda v_2$. Similarly, $\phi^*(v_2) = v_2$. 
    
    Next, we prove the cases for $e_{2n} \otimes e_{2m}$ and $e_{2n} \otimes e_{2m+1}$. The other two cases are similar. Suppose $$\phi_*(e_{2n} \otimes e_{2m}) = \sum_{p,q}c_{p,q}e_{2p} \otimes e_{2q} + \sum_{i,j}d_{i,j} e_{2i+1} \otimes e_{2j+1}.$$ We shall see that $d_{i,j} = 0$. For $e_{2i+1} \otimes e_{2j+1}$, its dual is $u_1v_1^iu_2v_2^j$. By adjunction, $$d_{i,j} = \langle u_1v_1^iu_2v_2^j, \phi_*(e_{2n} \otimes e_{2m}) \rangle = \langle \phi^*(u_1v_1^iu_2v_2^j), e_{2n} \otimes e_{2m} \rangle.$$ 
    Furthermore, since $u^2 = 0$, we have that 
    \begin{align*}
        \phi^*(u_1v_1^iu_2v_2^j) & = \phi^*(u_1u_2v_1^iv_2^j) \\
        & = \phi^*(u_1)\phi^*(u_2)\phi^*(v_1)^i\phi^*(v_2)^j \\
        & = (u_1 + \lambda u_2)u_2 \phi^*(v_1)^i\phi^*(v_2)^j \\
        & = u_1 u_2 \phi^*(v_1)^i\phi^*(v_2)^j.
    \end{align*}
    Since the monomial contains the dual basis $u_1u_2$, whereas the dual of $e_{2n} \otimes e_{2m}$ is $v_1^nv_2^m$, it follows that $d_{i,j} = 0$. 
    Next, we evaluate:
    $$\begin{aligned} \phi^*(v_1^p v_2^q) &= \phi^*(v_1)^p \phi^*(v_2)^q \\ &= (v_1 + \lambda v_2)^p v_2^q \\ &= \sum_{k=0}^p \binom{p}{k} \lambda^k v_1^{p-k} v_2^{q+k}. \end{aligned}$$
    The coefficient $c_{p,q} = \langle \phi^*(v_1^p v_2^q), e_{2n} \otimes e_{2m} \rangle$ is exactly the coefficient of $v_1^n v_2^m$ in the expansion of $\phi^*(v_1^p v_2^q)$. Performing the index substitution $p-k = n$ and $q+k = m$, we find $c_{p,q} = c_{n+k, m-k} = \binom{n+k}{k} \lambda^k$. 
    Therefore:
    $$\phi_*(e_{2n} \otimes e_{2m}) = \sum_{k = 0}^m \binom{n+k}{k} \lambda^k e_{2(n+k)} \otimes e_{2(m-k)}.$$
    Now, suppose 
    $$\phi_*(e_{2n} \otimes e_{2m+1}) = \sum_{p,q}c_{p,q} e_{2p} \otimes e_{2q+1} + \sum_{i,j} d_{i,j} e_{2i+1} \otimes e_{2j}.$$
    A similar argument shows that 
    $c_{p,q} = c_{n+k,m-k} = \binom{n+k}{k} \lambda^k$. 
    As for $d_{i,j}$, we compute:
    $$\phi^*(u_1v_1^iv_2^j) = (u_1 + \lambda u_2)(v_1+ \lambda v_2)^iv_2^j = u_1(v_1+\lambda v_2)^iv_2^j + \lambda u_2 (v_1+\lambda v_2)^iv_2^j.$$ The $u_1$ summand cannot pair with $v_1^nu_2v_2^m$, the dual of $e_{2n} \otimes e_{2m+1}$, so the coefficient for this summand shall be $0$. The $\lambda u_2$ summand contributes the coefficient $\binom{n+k}{k} \lambda^{k+1}$. Together, this shows that 
    $$\phi_*(e_{2n} \otimes e_{2m+1}) = \sum_{k=0}^m \binom{n+k}{k} \lambda^k e_{2(n+k)} \otimes e_{2(m-k)+1} + \sum_{k=0}^{m} \binom{n+k}{k} \lambda^{k+1} e_{2(n+k)+1} \otimes e_{2(m-k)}.$$
\end{proof}

\section{The wreath transfer formula for odd primes}
In this section, we prove the main theorem of the paper. Fix integers $s$ and $r$ such that $s(p-1) < r \leq sp$. Equivalently, we may write $r = Ps + d$ for $P = p-1$ and $1 \leq d \leq s$.

\subsection{The preimage element \texorpdfstring{$X(r,s;\epsilon)$}{X(r,s; epsilon)}}
By Remark \ref{ImportantRem}, the basis element $\beta^{\epsilon} Q^r \circ Q^s \in H_*(\Sigma_{p^2})$ is defined as 
$\beta^{\epsilon} Q^r \circ Q^s = \nu_{r,s} \ i_*(x(r,s; \epsilon)),$
where $$x(r,s; \epsilon) = e_{2(r-sP)P-\epsilon} \otimes e_{2sP}^{\otimes p},$$ and
$\nu_{r,s} = (-1)^{r+s}v(2sP)v(0)$ with $v(0) = 1$. Observe that every residual class $\beta^{\epsilon} Q^{r+s-j} \wr Q^j$ shares this same normalization factor, since $r+s-j + j = r+s$ and $v(2jP) \equiv v(2sP) \equiv 1$. Thus, the common scalar may be suppressed during the coefficient calculation and restored at the end.

Recall from Corollary \ref{doublecosetCor}, we want to find an input $X = X(r, s ; \epsilon) \in H_*(C_p \times C_p)$ such that $(\iota_{\Delta})_*(X(r,s; \epsilon)) = x(r,s; \epsilon).$
The following lemma establishes the existence and explicit form of this element.

\begin{Lem}\label{preimageLem}
Put $P=p-1$. There is a unique choice of coefficients
$a_0,\dots,a_{s-1}\in\F_p$ such that the element
$$X(r,s;0)= \sum_{k=0}^{s-1} a_k \Bigl(e_{2(r+k)P}\otimes e_{2(s-k)P}\Bigr)
$$
satisfies
$(\iota_{\Delta})_*\bigl(X(r,s;0)\bigr) =x(r,s;0).$
The coefficients $a_k$ are the unique solution of the triangular system
$$\sum_{k=0}^{n} a_k\, t\bigl(2(s-k)P,n-k\bigr) = \delta_{n,0},
\qquad 0\leq n\leq s-1,$$
and they are given by $$ a_k = (-1)^k \binom{-sP-1}{k}, \qquad
0\leq k\leq s-1.$$

If $\beta$ denotes the homological Bockstein, define
$ X(r,s;1):=\beta X(r,s;0).$
Explicitly, $$ X(r,s;1) = \sum_{k=0}^{s-1} a_k \Bigl(
e_{2(r+k)P-1}\otimes e_{2(s-k)P} +
e_{2(r+k)P}\otimes e_{2(s-k)P-1} \Bigr).$$
Then $ (\iota_{\Delta})_*\bigl(X(r,s;1)\bigr) = x(r,s;1).$
\end{Lem}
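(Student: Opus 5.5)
The plan is to compute $(\iota_\Delta)_*$ on each summand of $X(r,s;0)$ with May's formula (Lemma \ref{Mayformula}), to observe that the output lands in pure tensors indexed by a single parameter $n$, and then to reduce the lemma to a binomial identity modulo $p$.

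\textbf{Step 1: applying May's formula.} Take the summand $e_x\otimes e_y$ with $x=2(r+k)P$ and $y=2(s-k)P$. Since $x$ is even, $\delta(x)=0$, so only the first sum in Lemma \ref{Mayformula} survives. The term with index $m$ is
$$t(y,m)\,e_{x+(2pm-y)P}\otimes e^{\otimes p}_{y-2mP}.$$
Put $n=k+m$. Then
$$x+(2pm-y)P=2P\bigl(r+k+pm-(s-k)P\bigr)=2P\bigl(r-sP+pn\bigr),$$
and $y-2mP=2(s-n)P$. So every output is the pure tensor
$$E_n:=e_{2(r-sP+pn)P}\otimes e_{2(s-n)P}^{\otimes p},$$
which depends only on $n$. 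Distinct $n$ give distinct basis elements of (\ref{wrbasis}).

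The range of $m$ is controlled as follows. We have
$$t(2(s-k)P,m)=(-1)^m v(2(s-k)P)\binom{(s-n)P}{m}.$$
Because $P$ is even, $v(2(s-k)P)=1$ by (\ref{vEqu}). For $n=s$ the binomial coefficient is $\binom{0}{s-k}$ with $s-k\ge1$, so it vanishes. For $n>s$ the class $e_{2(s-n)P}$ has negative degree and is $0$. Hence only $0\le n\le s-1$ occurs.

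Consequently $(\iota_\Delta)_*X(r,s;0)=x(r,s;0)=E_0$ holds exactly when
$$\sum_{k=0}^n a_k\,t\bigl(2(s-k)P,n-k\bigr)=\delta_{n,0}\qquad\text{for }0\le n\le s-1.$$
This system is lower triangular with diagonal entries $t(2(s-n)P,0)=1$. Existence and uniqueness of $a_0,\dots,a_{s-1}$ follow at once.

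\textbf{Step 2: verifying the closed form.} This is the main computation, and I plan to do it with generating functions. Write
$$a_k=(-1)^k\binom{-sP-1}{k}=\binom{sP+k}{k}=[x^k](1-x)^{-sP-1}.$$
Also $(-1)^{n-k}\binom{(s-n)P}{n-k}=[x^{n-k}](1-x)^{(s-n)P}$. The left side of the system is therefore
$$[x^n]\,(1-x)^{-sP-1}(1-x)^{(s-n)P}=[x^n](1-x)^{-nP-1}=\binom{nP+n}{n}=\binom{pn}{n}.$$
It remains to show that $\binom{pn}{n}\equiv 0 \pmod p$ for $n\ge1$, which I will do with Lucas' theorem. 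Write $n=\sum_i n_ip^i$ in base $p$. The base-$p$ digits of $pn$ are the shifted digits $n_{i-1}$, with last digit $0$. Nonvanishing mod $p$ would require $n_0\le 0$ and $n_i\le n_{i-1}$ for all $i\ge1$. This forces every $n_i=0$, so $n=0$. Thus the left side is $\delta_{n,0}$ mod $p$, and the $a_k$ are as claimed.

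\textbf{Step 3: the Bockstein case.} Both $(\iota_\Delta)_*$ and $i_*$ are induced by group homomorphisms, so they commute with the homological Bockstein. Hence
$$(\iota_\Delta)_*\,\beta X(r,s;0)=\beta\, x(r,s;0).$$
The explicit formula for $X(r,s;1)$ follows from the Künneth derivation rule, using $\beta e_{2i}=e_{2i-1}$ on $H_*(C_p)$. The sign on the second summand is $+$ because the first factor has even degree.

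The step I expect to need the most care is identifying $\beta\bigl(e_{2i}\otimes e_{2j}^{\otimes p}\bigr)$ with $e_{2i-1}\otimes e_{2j}^{\otimes p}$ for $i>0$. The contributions from applying $\beta$ inside the tensor factors form a sum over cyclic rotations, that is, a norm element. In $H_*(C_p;M^{\otimes p})$ such a class is killed in positive $C_p$-degree. I would cite the standard formula $\beta(e_i\otimes y^{\otimes p})=(\beta e_i)\otimes y^{\otimes p}$ from \cite{May} or \cite{CLM1976}; otherwise I would verify it directly at the chain level on the standard resolution.
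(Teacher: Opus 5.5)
Your proposal is correct and follows the paper's proof essentially step for step. You use May's formula regrouped by $n=k+m$ into the lower-triangular system with unit diagonal, and the Vandermonde convolution, which you phrase as $(1-x)^{-sP-1}(1-x)^{(s-n)P}=(1-x)^{-nP-1}$, reduces the left side to $\binom{pn}{n}$, which Lucas' theorem kills for $n\ge 1$. For the Bockstein case you use naturality, and your reason that $e_{2(r-sP)P}\otimes\beta\bigl(e_{2sP}^{\otimes p}\bigr)$ vanishes (a norm class in positive $C_p$-degree, positive because $r>sP$) is the same point the paper makes via the basis description of wreath-product homology.
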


\begin{proof}
By Lemma \ref{Mayformula}, we have
$$
(\iota_{\Delta})_* \bigl(X(r,s;0)\bigr)
=
\sum_{k=0}^{s-1}
\sum_{h\geq 0}
a_k\,
t\bigl(2(s-k)P,h\bigr)
e_{2(r-sP+p(k+h))P}
\otimes
e_{2(s-k-h)P}^{\otimes p}.
$$
Set $n=k+h.$
Collecting all terms having the same value of $n$ gives
$$
(\iota_{\Delta})_*\bigl(X(r,s;0)\bigr)
=
\sum_{n=0}^{s-1}
\left(
\sum_{k=0}^{n}
a_k\,
t\bigl(2(s-k)P,n-k\bigr)
\right)
e_{2(r-sP+pn)P}
\otimes
e_{2(s-n)P}^{\otimes p}.
$$
The term corresponding to $n=s$ vanishes, because for
$0\leq k\leq s-1$,
$$
t\bigl(2(s-k)P,s-k\bigr)
=
(-1)^{s-k}
\binom{0}{s-k}
=
0.
$$
Terms with $n>s$ vanish because the second lower index is negative.

Since $x(r,s;0) = e_{2(r-sP)P}\otimes e_{2sP}^{\otimes p}$,
we have
$(\iota_{\Delta})_*\bigl(X(r,s;0)\bigr) = x(r,s;0)$
if and only if
$$
\sum_{k=0}^{n}
a_k\,
t\bigl(2(s-k)P,n-k\bigr)
=
\delta_{n,0},
\qquad
0\leq n\leq s-1.
$$

We now verify that
$$
a_k=(-1)^k\binom{-sP-1}{k}
$$
satisfies this system. Since $P=p-1$ is even, we have
$$
\begin{aligned}
\sum_{k=0}^{n}
a_k\,
t\bigl(2(s-k)P,n-k\bigr)
&=
(-1)^n
\sum_{k=0}^{n}
\binom{-sP-1}{k}
\binom{(s-n)P}{n-k}
\\
&=
(-1)^n
\binom{-nP-1}{n}
\\
&=
\binom{nP+n}{n}
\\
&=
\binom{np}{n}.
\end{aligned}
$$
The second equality follows from Vandermonde's identity, and the third
uses the generalized binomial identity.

If $n=0$, then
$\binom{np}{n}=1$.
If $n>0$, then by Lucas' theorem in $\F_p$,
$\binom{np}{n} = 0$.
Thus
$$
\sum_{k=0}^{n}
a_k\,
t\bigl(2(s-k)P,n-k\bigr)
=
\delta_{n,0}.
$$
Note that the system is lower triangular, and its diagonal entries are
$t\bigl(2(s-n)P,0\bigr)=1.$
Therefore the coefficients $a_0,\dots,a_{s-1}$ are uniquely determined.

For the case $\epsilon=1$, define
$X(r,s;1)=\beta X(r,s;0)$.
The homological Bockstein satisfies the Cartan formula
$$\beta(a\otimes b)
=
\beta(a)\otimes b
+
(-1)^{|a|}a\otimes\beta(b).$$
Since $e_{2(r+k)P}$ has an even degree, this proves the displayed formula for $X(r,s;1)$.

Finally, by naturality of the Bockstein,
$$
(\iota_{\Delta})_*\bigl(X(r,s;1)\bigr)
= (\iota_{\Delta})_* \bigl(\beta X(r,s;0)\bigr)=
\beta (\iota_{\Delta})_* \bigl(X(r,s;0)\bigr)
=
\beta x(r,s;0).
$$
Applying the Cartan formula gives
$$
\begin{aligned}
\beta x(r,s;0)
&=
e_{2(r-sP)P-1}
\otimes
e_{2sP}^{\otimes p}
\\
&\quad+
e_{2(r-sP)P}
\otimes
\beta\bigl(e_{2sP}^{\otimes p}\bigr).
\end{aligned}
$$
The second summand vanishes. Indeed, if $s=0$, then $\beta\bigl(e_0^{\otimes p}\bigr)=0$. If $s\geq1$, the Cartan formula gives
$$\beta\left(e_{2sP}^{\otimes p}\right)
=
\sum_{a=1}^{p}
e_{2sP}^{\otimes(a-1)}
\otimes e_{2sP-1}
\otimes e_{2sP}^{\otimes(p-a)}.$$
In each summand, the inner tensor factors are not purely identical. By the basis description in \eqref{wrbasis}, a class with nonconstant inner factors can occur only when the outer factor is $e_0$. 
On the other hand, since $r>sP$, we have $e_{2(r-sP)P}\neq e_0$.
Therefore,
$$
\beta x(r,s;0)
=
e_{2(r-sP)P-1}
\otimes
e_{2sP}^{\otimes p}
=
x(r,s;1).
$$
Thus
$(\iota_{\Delta})_*\bigl(X(r,s;1)\bigr) = x(r,s;1)$
as required.
\end{proof}

\subsection{Support lemma and coefficient decomposition}
\label{subsec:support-coefficients}
Let $S = \{e,\gamma, \tau^\lambda\mid\lambda\in\F_p^\times\}$.
For the element $X(r,s; \epsilon)$ constructed in Lemma \ref{preimageLem}, define the total
double-coset sum
$$
\mathcal D(r,s; \epsilon)
=
\sum_{g\in  S}
i_*\circ(\iota_\Delta)_*\circ(c_{g^{-1}})_*
\bigl(X(r,s;\epsilon)\bigr).
$$
By Corollary~\ref{doublecosetCor} and the linearity of the transfer, we have
$$
\tr\bigl(\beta^\epsilon Q^r\circ Q^s\bigr)
=
\nu_{r,s}\mathcal D(r,s; \epsilon).
$$
Thus we may compute the coefficients in $\mathcal{D}(r,s; \epsilon)$ and
restore the common factor $\nu_{r,s}$ at the end.

There is a support lemma to determine $\mathcal{D}(r,s; \epsilon)$ in terms of the basis elements described in \ref{wrbasis}.

\begin{Lem}[Support lemma]
\label{supportLem}
For $\epsilon=0$, only tensors of the form
$$
e_{2(r+s-pj)P}\otimes e_{2jP}^{\otimes p}
$$
can occur in $\mathcal D(r,s;0)$ for $0\leq j\leq
\left\lfloor\frac{r+s}{p}\right\rfloor$, representing the classes
$Q^{r+s-j}\wr Q^j$.

For $\epsilon=1$, only tensors of the following two forms can occur in
$\mathcal D(r,s;1)$. The first family is
$$
e_{2(r+s-pj)P-1}\otimes e_{2jP}^{\otimes p}
$$
representing the classes
$\beta Q^{r+s-j}\wr Q^j$ for $0\leq j\leq
\left\lfloor\frac{r+s-1}{p}\right\rfloor$. The second family is
$$
e_{(2(r+s-pj)+1)P}\otimes e_{2jP-1}^{\otimes p}
$$
representing the classes
$Q^{r+s-j}\wr\beta Q^j$ for $1\leq j\leq
\left\lfloor\frac{r+s}{p}\right\rfloor$.
No mixed wreath tensors occur.
\end{Lem}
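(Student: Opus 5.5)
We would prove the support lemma by tracking degrees and parities through the three maps $(c_{g^{-1}})_*$, $(\iota_\Delta)_*$ and $i_*$. First we record the shape of the input. By Lemma \ref{preimageLem}, $X(r,s;0)$ is a combination of tensors $e_a\otimes e_b$ with $a,b$ both even multiples of $P$ and total degree $2(r+s)P$. $X(r,s;1)$ is a combination of tensors of total degree $2(r+s)P-1$, exactly one of whose factors is odd.

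Next we apply the conjugations. The identity is trivial. By Lemma \ref{swapLem}, the swap sends $e_a\otimes e_b$ to $\pm e_b\otimes e_a$. By Lemma \ref{shearLem}, the shear preserves total degree, and it preserves the parity type (even,even) in cases (1)--(3). Case (4) moves an odd factor from the second slot to the first, keeping exactly one odd factor. So after summing over $S$ we still have, for $\epsilon=0$, only even-even tensors of total degree $2(r+s)P$. For $\epsilon=1$ we have only tensors of total degree $2(r+s)P-1$ with exactly one odd factor. The individual degrees are no longer multiples of $P$, so the congruence must come from May's formula rather than from the input.

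We then feed an arbitrary such $e_x\otimes e_y$ into Lemma \ref{Mayformula}. Every output term is a pure wreath tensor $e_{x'}\otimes e_{y'}^{\otimes p}$, so no mixed wreath tensors arise. Since $i_*$ maps the pure tensor basis of $H_*(C_p\wr C_p)$ to that of $H_*(\Sigma_p\wr\Sigma_p)$ (or to zero) and sends mixed tensors nowhere new, no mixed tensors occur in $\mathcal D$. Degree is preserved, so $x'+py'=x+y$.

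It remains to pin down which $(x',y')$ survive in $H_*(\Sigma_p\wr\Sigma_p)$. We use the standard fact that $H_*(\Sigma_p)$ is the image of $H_*(C_p)$ spanned by $e_i$ with $i\equiv 0,-1\pmod{2P}$, all other $e_i$ mapping to zero. For $\epsilon=0$, the inner degree $y'=y-2kP$ is even, hence must be $2jP$. Total degree $2(r+s)P$ then forces $x'=2(r+s-pj)P$, which is nonnegative exactly when $j\le\lfloor (r+s)/p\rfloor$. This gives $Q^{r+s-j}\wr Q^j$ after normalization (Remark \ref{ImportantRem}). The second sum in May's formula needs $x,y$ both odd, which is excluded here.

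For $\epsilon=1$ there are two possibilities. If the inner factor is even, it is $2jP$ and the outer factor has degree $2(r+s-pj)P-1$; this is a valid class only if that degree is nonnegative, i.e. positive outer index, giving $j\le\lfloor (r+s-1)/p\rfloor$. If the inner factor is odd, it must be $2jP-1$ with $j\ge1$, and the outer degree is $2(r+s)P-1-p(2jP-1)=(2(r+s-pj)+1)P$. We must also check that this outer degree is allowed. It is an odd multiple of $P$, hence not $\equiv 0,-1 \pmod{2P}$ in the usual sense. So the point to settle is the correct identification of this outer index with the generator named $Q^{r+s-j}\wr\beta Q^j$ via Definition \ref{upperindexDLDef}, where $v(n)$ for odd $n$ shifts the lower index. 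This is the step we expect to be the main obstacle: matching the lower-indexed outer class, whose degree is congruent to $P$, with the upper-index convention for operations on odd-degree classes.

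The second sum of May's formula (both $x,y$ odd, with $\delta(x)\delta(y-1)$) cannot occur, because our tensors have exactly one odd factor. Indeed $\delta(y-1)$ requires $y$ even, so this sum contributes only when $x$ is odd, and its output has inner degree $y-2kP-1$, which is odd. We will check that its degrees also land in the second family, so it causes no new support. Finally, the bounds on $j$ follow from nonnegativity of the outer degree.
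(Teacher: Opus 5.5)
Your outline is correct and is the paper's argument. Conjugation preserves total degree and the parity pattern (exactly one odd factor when $\epsilon=1$), May's formula outputs only pure tensors $e_u\otimes e_v^{\otimes p}$, and the two families and the bounds on $j$ then follow from a degree count with $u\ge 0$. The paper simply writes $v=2jP$ or $v=2jP-1$ at this point. Your appeal to the fact that $e_i\mapsto 0$ in $H_*(\Sigma_p)$ unless $i\equiv 0,-1\pmod{2P}$ is a fine way to justify that step.

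However, your remark that after summing over $S$ the individual degrees ``are no longer multiples of $P$'' is wrong. Since $\sum_{\lambda\in\F_p^\times}\lambda^l=0$ for $P\nmid l$, the summed shear terms already have both indices $\equiv 0,-1\pmod{2P}$; this is how Proposition \ref{coefficientDecompositionProp} gets $l=hP$. So the congruence already holds in $H_*(C_p\wr C_p)$. It is not produced by May's formula, which only shifts the inner index by multiples of $2P$ (and by $-1$ in its second sum).

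The step you call the main obstacle is not one. First, the lemma only bounds the support, so you never need $e_{(2(r+s-pj)+1)P}\otimes e_{2jP-1}^{\otimes p}$ to be nonzero. Second, the labelling is a one-line check from Definition \ref{upperindexDLDef}. The class $\beta Q^j x_0$ has odd degree $n=2jP-1$, and $Q^{r+s-j}$ on it is a unit multiple of $Q_{(2(r+s-j)-n)P}=Q_{(2(r+s-pj)+1)P}$. The odd multiple of $P$ comes from $2s-n$ being odd, not from $v(n)$, which is only a unit scalar. Your ``$0,-1\pmod{2P}$'' test does not apply to this outer factor: for $x$ of odd degree, $\Sigma_p$ permutes $x^{\otimes p}$ through the sign character, so the surviving outer indices are those $\equiv P,\,P-1\pmod{2P}$.

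Finally, the second sum in Lemma \ref{Mayformula} requires $x$ odd and $y$ even, not both odd. It is absent for $\epsilon=0$, but it does occur for $\epsilon=1$, e.g.\ from $e_{2(r+k)P-1}\otimes e_{2(s-k)P}$. Your case split uses only the total degree and the parity of the inner index, so its outputs (inner degree $y-2kP-1$) automatically fall into the second family and no separate check is needed.
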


\begin{proof}
Each conjugation map $(c_{g^{-1}})_*$ sends a basis element
$e_a\otimes e_b$ to a linear combination of basis elements of the same
form. The diagonal-embedding formula in Lemma~\ref{Mayformula} then sends each such element to a linear combination of pure tensors of the form:
$$e_u\otimes e_v^{\otimes p}.$$
Hence, no mixed wreath tensors occur.

For $\epsilon=0$, the two indices are even. Write
$v=2jP$.
Since the total degree of $x(r,s; 0)$ is $2(r+s)P$, the total degree of $e_u\otimes e_v^{\otimes p}$ must be $2(r+s)P$, so
$u=2(r+s-pj)P$.
Therefore, the nonnegativity of $u$ and $v$ gives
$$
0\leq j\leq
\left\lfloor\frac{r+s}{p}\right\rfloor.
$$

For $\epsilon=1$, the total degree is $2(r+s)P-1$.
There are two possible parity patterns.

First suppose that the first tensor factor has odd degree and the second
tensor factor has even degree. Then we may write
$v=2jP$
with $j\geq 0$. The total degree condition gives
$u+pv=2(r+s)P-1$,
so
$u=2(r+s-pj)P-1$.
The condition $u\geq 0$ is equivalent to
$r+s-pj\geq 1$.
Therefore this family occurs only for
$$
0\leq j\leq
\left\lfloor\frac{r+s-1}{p}\right\rfloor.
$$

Now suppose that the first tensor factor has even degree and the second
tensor factor has odd degree. Then we may write
$v=2jP-1$
with $j\geq 1$. The total degree condition gives
$u+p(2jP-1)=2(r+s)P-1$,
and hence $u=(2(r+s-pj)+1)P$.
The condition $u\geq 0$ is equivalent to
$r+s-pj\geq 0$.
Therefore this family occurs only for
$$
1\leq j\leq
\left\lfloor\frac{r+s}{p}\right\rfloor.
$$
Together, this gives the two stated families.
\end{proof}

We first consider the case $\epsilon=0$. For each supported value of $j$ from Lemma \ref{supportLem},
let $C_j$ denote the coefficient of
$$
e_{2(r+s-pj)P}\otimes e_{2jP}^{\otimes p}
$$
in $\mathcal D(r,s; 0)$. We write
$$
C_j=I_j+S_j+G_j,
$$
where $I_j$, $S_j$, and $G_j$ denote the contributions of the identity,
the shear elements, and the swap element, respectively.

For $q\in\mathbb Z$, define
$$
c_j(q)
=
\begin{cases}
(-1)^q\binom{jP}{q}, & q\geq0,\\
0, & q<0.
\end{cases}
$$

Now, we can write each $I_j$, $S_j$ and $G_j$ as a linear combination of the coefficients $a_k$ and $c_j(q)$.

\begin{Prop}[Coefficient decomposition]
\label{coefficientDecompositionProp}
Assume $\epsilon=0$. For each integer $0 \leq j \leq \lfloor (r+s)/p \rfloor$, we have
$$I_j=\delta_{j,s},$$
$$
S_j
=
-
\sum_{\substack{0\leq k\leq s-1,\ h\geq0\\k+h\leq s-j}}
a_k
\binom{(r+k+h)P}{hP}
c_j(s-j-k-h),
$$
and
$$
G_j
=
\sum_{k=0}^{s-1}
a_kc_j(r+k-j).
$$
Empty sums are interpreted as zero.
\end{Prop}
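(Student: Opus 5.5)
The plan is to compute the three contributions separately. For each $g\in S$, apply $(c_{g^{-1}})_*$ to $X(r,s;0)=\sum_{k=0}^{s-1}a_k\,e_{2(r+k)P}\otimes e_{2(s-k)P}$ from Lemma~\ref{preimageLem}, then push forward along $(\iota_\Delta)_*$ using Lemma~\ref{Mayformula}. From the result we read off the coefficient of $e_{2(r+s-pj)P}\otimes e_{2jP}^{\otimes p}$. By Lemma~\ref{supportLem} it is enough to match the second (inner) index $2jP$, since total degree then fixes the outer index. Throughout, every input has both indices even. Hence $\delta(x)=0$, the correction term in Lemma~\ref{Mayformula} drops out, and only $t(y,k)$ with $y$ even appears. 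For such $y=2m$ with $m$ a multiple of $P$ (even), we get $v(2m)=(-1)^m=1$, so $t(2mP',h)=(-1)^h\binom{m-hP}{h}$ with no further normalization.

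\textbf{Identity term.} $(c_e)_*=\id$, and Lemma~\ref{preimageLem} gives $(\iota_\Delta)_*X(r,s;0)=x(r,s;0)=e_{2(r-sP)P}\otimes e_{2sP}^{\otimes p}$. Since $r-sP=r+s-ps$, this is exactly the $j=s$ basis tensor, so $I_j=\delta_{j,s}$.

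\textbf{Swap term.} By Lemma~\ref{swapLem}, with both degrees even the sign is $+1$, so $e_{2(r+k)P}\otimes e_{2(s-k)P}\mapsto e_{2(s-k)P}\otimes e_{2(r+k)P}$. Apply Lemma~\ref{Mayformula} with $y=2(r+k)P$. The inner index becomes $2(r+k-h)P$, so matching $2jP$ forces $h=r+k-j$. The coefficient is then
\[
t\bigl(2(r+k)P,h\bigr)=(-1)^h\binom{(r+k)P-hP}{h}=(-1)^{r+k-j}\binom{jP}{r+k-j}.
\]
This equals $c_j(r+k-j)$, including the convention that the term is zero when $h<0$. Summing against $a_k$ gives $G_j$.

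\textbf{Shear terms.} This is the step needing the most care. As noted before Lemma~\ref{shearLem}, we may sum over $\lambda\in\F_p^\times$ using $c_{\tau^\lambda}$ directly. By Lemma~\ref{shearLem}(1), with $n=(r+k)P$ and $m=(s-k)P$, each input goes to $\sum_i\binom{(r+k)P+i}{i}\lambda^i\,e_{2((r+k)P+i)}\otimes e_{2((s-k)P-i)}$. The key step is the character sum $\sum_{\lambda\in\F_p^\times}\lambda^i$. It equals $p-1=-1$ when $P\mid i$ (including $i=0$) and $0$ otherwise. So only $i=hP$ survives, with $0\le h\le s-k$, and each such term carries coefficient $-\binom{(r+k+h)P}{hP}$.

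Next apply Lemma~\ref{Mayformula} with $y=2(s-k-h)P$. The inner index $2(s-k-h-h')P$ equals $2jP$ exactly when $h'=s-j-k-h$, which must be nonnegative; this is the constraint $k+h\le s-j$. The coefficient is
\[
t\bigl(2(s-k-h)P,h'\bigr)=(-1)^{h'}\binom{(s-k-h)P-h'P}{h'}=(-1)^{h'}\binom{jP}{h'}=c_j(h').
\]
Multiplying by $a_k$ and summing over $k$ and $h$ gives the stated formula for $S_j$.

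The main obstacle is bookkeeping rather than any conceptual point. One must make sure of three things: the exponents of $v$ and the parities really trivialize, because $P$ is even; the range restrictions (nonnegativity of lower indices, $i\le m$) agree with the summation ranges and the convention $c_j(q)=0$ for $q<0$; and the character-sum evaluation correctly produces the overall minus sign in $S_j$.
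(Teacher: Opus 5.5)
Your proposal is correct and follows the paper's proof essentially step for step. The identity term comes from Lemma~\ref{preimageLem}; the shear terms use Lemma~\ref{shearLem} with the character sum $\sum_{\lambda\in\F_p^\times}\lambda^i$ to keep only $i=hP$ and produce the minus sign; the swap term uses Lemma~\ref{swapLem} with trivial sign; and each is pushed through Lemma~\ref{Mayformula} with the inner index matched to $2jP$ and $v\equiv 1$ since $P$ is even (the only slip is notational: $t(2mP',h)$ should read $t(2m,h)$).
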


\begin{proof}
The identity contribution is $(\iota_\Delta)_*X(r,s;0)=x(r,s;0)$,
which is precisely the target tensor corresponding to $j=s$. Hence $I_j=\delta_{j,s}$.

We next consider the shear elements. Since
$
(\tau^\lambda)^{-1}=\tau^{-\lambda}
$
and $\lambda\mapsto-\lambda$ permutes $\F_p^\times$, using the inverse
only reindexes the total shear sum. For a fixed $k$ and $\lambda$, the shear formula from Lemma \ref{shearLem} sends the $k$-th summand $a_k \Bigl(e_{2(r+k)P}\otimes e_{2(s-k)P}\Bigr)$
of $X(r,s;0)$ to
$$
a_k
\sum_{l=0}^{(s-k)P}
\binom{(r+k)P+l}{l}
\lambda^l
e_{2((r+k)P+l)}
\otimes
e_{2((s-k)P-l)}.
$$
When summing over $\lambda\in\F_p^\times$, we use the standard identity:
$$
\sum_{\lambda\in\F_p^\times}\lambda^l
=
\begin{cases}
-1, & P\mid l,\\
0, & P\nmid l.
\end{cases}
$$
Thus only the terms with $l=hP$
survive, and their total contribution is
$$
-
a_k
\binom{(r+k+h)P}{hP}
e_{2(r+k+h)P}
\otimes
e_{2(s-k-h)P}.
$$
Applying the diagonal-embedding formula in Lemma~\ref{Mayformula} to
$$
e_{2(r+k+h)P}\otimes e_{2(s-k-h)P},
$$
a summand indexed by $q\geq 0$ has coefficient
$t(2(s-k-h)P,q)$ and is of the form
$$
e_{2(r+k+h)P+(2pq-2(s-k-h)P)P}
\otimes
e_{2(s-k-h-q)P}^{\otimes p}.
$$
This term contributes to $C_j$ only when the second tensor factor is
$e_{2jP}^{\otimes p}$, that is, when $q=s-j-k-h$.
For this value of $q$, the first tensor factor becomes $e_{2(r+s-pj)P}$.
Moreover,
$$
\begin{aligned}
t\bigl(2(s-k-h)P,q\bigr)
&=
(-1)^q\binom{(s-k-h-q)P}{q}  \\
&=
(-1)^q\binom{jP}{q}
=
c_j(q).
\end{aligned}
$$
Therefore
$$
S_j
=
-
\sum_{\substack{0\leq k\leq s-1,\ h\geq0\\k+h\leq s-j}}
a_k
\binom{(r+k+h)P}{hP}
c_j(s-j-k-h).
$$

Finally, consider the swap element. By the swap formula from Lemma~\ref{swapLem}, it sends the $k$-th summand of $X(r,s;0)$ to
$$
a_k
e_{2(s-k)P}\otimes e_{2(r+k)P}.
$$
Applying the diagonal-embedding formula in Lemma~\ref{Mayformula} to
$$
e_{2(s-k)P}\otimes e_{2(r+k)P},
$$
a summand indexed by $q\geq 0$ has coefficient
$t(2(r+k)P,q)$ and is of the form
$$
e_{2(s-k)P+(2pq-2(r+k)P)P}
\otimes
e_{2(r+k-q)P}^{\otimes p}.
$$
This term contributes to $C_j$ only when the second tensor factor is
$e_{2jP}^{\otimes p}$, that is, when $q=r+k-j$.
For this value of $q$, the first tensor factor becomes $e_{2(r+s-pj)P}$.
Moreover,
$$
\begin{aligned}
t\bigl(2(r+k)P,q\bigr)
&=
(-1)^q\binom{(r+k-q)P}{q}  \\
&=
(-1)^q\binom{jP}{q}
=
c_j(q).
\end{aligned}
$$
Hence
$$
G_j
=
\sum_{k=0}^{s-1}
a_kc_j(r+k-j).
$$
\end{proof}

\subsection{Evaluation of the coefficients}
\label{subsec:evaluation-coefficients}
We have expressed these local contributions as linear combinations of the coefficients $a_k$ and $c_j(q)$.
Continuing with the case $\epsilon=0$, we evaluate the total contribution $C_j$ in the two ranges $0 \leq j < s$ and $s \leq j \leq \lfloor (r+s)/p \rfloor$.

The proof of the following statement is deferred to
Section~\ref{shearswapcollapsesection}. 

\begin{Prop}
\label{interiorCoefficientProp}
For $0\leq j<s$, we have the total contribution
$$
C_j
= (-1)^{r-j}
\binom{(j-s)(p-1)-1}{r-(p-1)s-j-1}.
$$
\end{Prop}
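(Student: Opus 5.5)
Since $0\leq j<s$, Proposition~\ref{coefficientDecompositionProp} gives $I_j=0$. The task is therefore to evaluate $S_j+G_j$ in closed form. Throughout, $P=p-1$ and $r=Ps+d$ with $1\leq d\leq s$. I will work with formal power series over $\F_p$ and convert every sum into a coefficient extraction. The $a_k$ have generating function $\sum_k a_k z^k=(1+z)^{sP}$ up to $z^{s-1}$, because $(-1)^k\binom{-sP-1}{k}=\binom{sP+k}{k}$. Truncation at $k\le s-1$ costs nothing, since all indices that actually occur stay below $s$. Similarly, $c_j(q)=[z^q](1-z)^{jP}$.

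\textbf{Swap term.} $G_j=\sum_k a_k c_j(r+k-j)$ is a convolution with a reversed index. I rewrite it as $G_j=[z^{r-j}]\,A^\ast(z)(1-z)^{jP}$, where $A^\ast(z)=\sum_{k\le s-1}a_k z^{-k}$. Equivalently, it is the coefficient of $z^{r-j+s-1}$ in $z^{s-1}A(1/z)(1-z)^{jP}$. Since $r-j>s-j\ge1$ and the index range is large, this is a ``$P$-section'' type extraction.

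\textbf{Shear term.} The factor $\binom{(r+k+h)P}{hP}$ is the $hP$-th coefficient of $(1+w)^{(r+k+h)P}$ restricted to exponents divisible by $P$. I encode that restriction with the averaging identity
\[
\sum_{\lambda\in\F_p^\times}\lambda^l=-[P\mid l],
\]
which is exactly where the shear sum came from, so I can rewind it. Concretely, $S_j$ becomes $\sum_{\lambda}$ of a double convolution, namely the coefficient of $x^{s-j}$ in
\[
\sum_k a_kx^k\sum_{l}\binom{(r+k)P+l}{l}\lambda^l x^{l/P}\cdot(1-x)^{jP}.
\]
A cleaner route is to substitute $x=z^P$ and write $\sum_l\binom{N+l}{l}(\lambda z)^l=(1-\lambda z)^{-N-1}$, with $N=(r+k)P$. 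Averaging over $\lambda$ then extracts the $P$-section. In $\F_p[[z]]$ we have $(1-\lambda z)^{-(r+k)P}=(1-\lambda^{P}z^{P})^{-(r+k)}\cdot(\text{correction})$, and $\lambda^P=1$. Using Frobenius, $(1-\lambda z)^{p}=1-\lambda z^{p}$, so $(1-\lambda z)^{-P}=(1-\lambda z)(1-\lambda z^p)^{-1}$. This should make the $\lambda$-dependence polynomial of low degree, and the $\lambda$-average becomes computable. Both $S_j$ and $G_j$ should then be expressed through one auxiliary $P$-section polynomial $D_A(z)$ attached to $(1+z)^{sP}$, matching the outline of Section~4.

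\textbf{Combining.} I expect $S_j+G_j$ to reduce to
\[
[z^{d-j-1}]\,(1-z)^{(j-s)P-1}
\]
up to the sign $(-1)^{r-j}$, because the $D_A$ terms cancel between the shear and swap contributions. Expanding this gives exactly $(-1)^{r-j}\binom{(j-s)P-1}{r-Ps-j-1}$. When $d-j-1<0$ the binomial vanishes, consistent with the convention in Theorem~\ref{mainThm}.

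\textbf{Main obstacle.} The hard step is the shear term. Its triple sum over $k,h$ and the truncation $k+h\le s-j$ must be turned into a clean coefficient extraction that matches the swap term's $P$-section. I will first verify the cancellation numerically for $p=3$ and small $s$, to pin down signs and the exact shape of $D_A$. After that, I will prove a coefficient-extraction lemma: for a polynomial $F$ of degree less than $s$, the $\lambda$-average of $F(\lambda z)$ times the shear kernel equals the reversed extraction appearing in $G_j$, up to the residual binomial.
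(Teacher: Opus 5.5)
Your plan correctly reduces to $C_j=S_j+G_j$ and names the right target, $C_j=-[z^{d-j-1}](1-z)^{-(s-j)P-1}$. However, the step that carries all the content is only asserted (``the $D_A$ terms cancel''), and the coefficient-extraction lemma you promise at the end is a restatement of the proposition, so there is a genuine gap.

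Concretely, set $m=s-j$, $A_0=s+1-d$ and $F_m(z)=(1-z)^{-mP-1}$. The swap term is not a $P$-section extraction at all; it is a single Vandermonde convolution, $G_j=(-1)^{r-j}\binom{-mP-1}{jp-r}=[z^{A_0-1-pm}]F_m(z)$. Once $S_j=-[z^m]F_m(z)D_{A_0}(z)$ is known, what remains is the congruence
\[
[z^m]F_m(z)\bigl(D_{A_0}(z)-z^{A_0}\bigr)=[z^{A_0-1-pm}]F_m(z)\qquad\text{in }\F_p,
\]
and nothing in your proposal addresses it. The paper needs two genuine ingredients here:
\begin{enumerate}
\item the structural identity $D_A(z)-z^A=\sum_{\omega\geq0}\binom{A-1-\omega}{P\omega}\bigl(z(1-z)^P\bigr)^{\omega}$ (Lemma~\ref{DA-ZLem}), proved by summing $\sum_B D_B(z)T^B$ via the averaging formula, a partial-fraction split, and the polynomial identity of Lemma~\ref{HermiteId};
\item the detector identity $[z^M]F_M(z)\bigl(z(1-z)^P\bigr)^{\omega}=\delta_{\omega,M}$, which rests on $\binom{np}{n}\equiv0$.
\end{enumerate}
Numerical checks at $p=3$ cannot replace this.

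The setup also contains errors. The generating function of the $a_k$ is $(1-z)^{-sP-1}$, and it does not agree with $(1+z)^{sP}$ below degree $s$: for $p=3$, $s=2$ one has $a_1=\binom{5}{1}\equiv2$ while $\binom{4}{1}\equiv1$. Accordingly, the $P$-section polynomial that occurs is $D_{A_0}$, built from $(1-t)^{A_0P}$ with $A_0=s+1-d$, not one attached to $(1+z)^{sP}$. The index $A_0$ enters only through the exponent identity $p(sP+1)-rP-1=A_0P$.

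Your Frobenius idea does work for the shear term, but not by making the $\lambda$-dependence ``polynomial of low degree''. You must sum over $k$ before averaging over $\lambda$:
\[
\sum_{k\geq0}a_kz^{kP}(1-\lambda z)^{-(r+k)P-1}=(1-\lambda z)^{-rP-1}\bigl(1-z^P(1-\lambda z)^{-P}\bigr)^{-sP-1}=(1-\lambda z)^{A_0P}(1-z^P)^{-sP-1},
\]
using $(1-\lambda z)^P-z^P=(1-z^P)/(1-\lambda z)$ in $\F_p[z]$ for $\lambda\in\F_p^\times$. Averaging over $\lambda$ then gives $S_j=-[z^m]F_m(z)D_{A_0}(z)$. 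This is the same result the paper obtains with the substitution $x=t/(1-t)$ and the analogous identity $(1-t)^P-t^P=(1-t^P)/(1-t)$.

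Finally, extending the $k$-range is not automatic. For $j=0$ the term $k=s$, $h=0$ does satisfy $k+h\leq m$, and it drops out only because $a_s=\binom{sp}{s}\equiv0$ by Lucas' theorem.
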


\begin{Prop}\label{boundaryCoefficientProp}
For
$
s\leq j\leq
\left\lfloor\frac{r+s}{p}\right\rfloor,
$
we have the total contribution
$$
C_j
=
\begin{cases}
1, & j=s,\\
0, & j>s.
\end{cases}
$$
\end{Prop}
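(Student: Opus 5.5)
The plan is to evaluate the three contributions $I_j$, $S_j$, $G_j$ from Proposition~\ref{coefficientDecompositionProp} directly in the range $s\le j\le\lfloor (r+s)/p\rfloor$. The identity term gives $I_j=\delta_{j,s}$ at once. For the shear term, the index set $\{k,h\ge 0,\ k+h\le s-j\}$ is empty when $j>s$, so $S_j=0$ there. When $j=s$ only $k=h=0$ survives, which gives
$$S_s=-a_0\binom{rP}{0}c_s(0)=-1.$$
So the statement reduces to showing that $G_s=1$ and that $G_j=0$ for $j>s$.

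For the swap term, I substitute $a_k=(-1)^k\binom{-sP-1}{k}$ from Lemma~\ref{preimageLem}. This gives
$$G_j=(-1)^{r-j}\sum_{k=0}^{s-1}\binom{-sP-1}{k}\binom{jP}{r-j+k}.$$
I first extend the sum to all $k\ge 0$ and then apply Vandermonde's identity, valid as a polynomial identity in the upper index and hence for the generalized binomial. Rewriting $\binom{jP}{r-j+k}=\binom{jP}{jp-r-k}$ yields
$$\sum_{k\ge0}\binom{-sP-1}{k}\binom{jP}{jp-r-k}=\binom{(j-s)P-1}{jp-r}.$$
The main point is to control the terms with $k\ge s$ added in this extension. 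Such a term is nonzero only if $r-j+k\le jP$, i.e.\ $r+k\le jp$. Combined with $jp\le r+s$, this forces $k=s$ and $jp=r+s$.

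For $j=s$ this would mean $r=sP$, which is excluded by $r>sP$. Hence the truncated and full sums agree, and
$$G_s=(-1)^{r-s}\binom{-1}{sp-r}=(-1)^{r-s}(-1)^{sp-r}=(-1)^{sP}=1,$$
since $P$ is even. Therefore $C_s=1-1+1=1$.

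For $j>s$ the full sum vanishes. Indeed, $N=(j-s)P-1\ge 0$, while $m=jp-r\ge (j-s)p>N$ because $r\le sp$, so $\binom{N}{m}=0$. The only possible discrepancy is the single extra term $k=s$ when $jp=r+s$. That term equals
$$\binom{-sP-1}{s}\binom{jP}{jP}=(-1)^s\binom{sp}{s},$$
and $\binom{sp}{s}\equiv 0 \pmod p$ by Lucas' theorem for $s\ge1$ (as already used in Lemma~\ref{preimageLem}). Hence $G_j=0$ and $C_j=0$ for $j>s$.

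I expect the main obstacle to be the bookkeeping in the truncation step, namely justifying the passage from $k\le s-1$ to all $k\ge0$ using the constraint $jp\le r+s$ from Lemma~\ref{supportLem}. The rest is the generalized Vandermonde identity and Lucas' theorem.
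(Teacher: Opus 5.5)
Your proposal is correct and follows essentially the same route as the paper: $I_j=\delta_{j,s}$, $S_s=-1$ and $S_j=0$ for $j>s$, and $G_j$ is evaluated by extending the $k$-sum and applying Vandermonde, with the only possible extra term ($k=s$, which occurs only when $jp=r+s$) killed by Lucas' theorem. The one cosmetic difference is that you justify the truncation uniformly via $r+k\le jp\le r+s$, whereas the paper splits into the cases $M=pj-r<s$ and $M=s$ (and rewrites $r=Ps+d$ for $j=s$).
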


\begin{proof}
First, let $j=s$. By Proposition~\ref{coefficientDecompositionProp}, $I_s = 1$. The condition $k+h \leq s-j = 0$ forces $k=h=0$, and hence $$S_s = -a_0\binom{rP}{0}c_s(0) = -1.$$

For the swap contribution, using the definition of $a_k$ and $c_j$, we have
$$
\begin{aligned}
G_s
&=
\sum_{k=0}^{s-1}a_kc_s(r+k-s)\\
&=
(-1)^{r-s}
\sum_{k=0}^{s-1}
\binom{-sP-1}{k}
\binom{sP}{r+k-s}.
\end{aligned}
$$
Since $r=Ps+d$ for $1 \leq d \leq s$,
$$
\binom{sP}{r+k-s}
=
\binom{sP}{s-d-k}.
$$
Because $s-d\leq s-1$, extending the sum to all $k\geq0$ adds only
zero terms. Therefore, by Vandermonde's identity,
$$
\begin{aligned}
G_s
&=
(-1)^{r-s}
\sum_{k\geq0}
\binom{-sP-1}{k}
\binom{sP}{s-d-k}\\
&=
(-1)^{r-s}\binom{-1}{s-d}\\
&=
(-1)^{r-d}\\
&=
1,
\end{aligned}
$$
because $r-d=Ps$ and $P$ is even. Thus $C_s=I_s+S_s+G_s=1-1+1=1.$

Now suppose $s < j \leq \lfloor (r+s)/p \rfloor$. Then $I_j = 0$. Moreover, the condition $k+h \leq s-j$ has no solutions with $k,h \geq 0$, so $S_j = 0$.

It remains to consider $G_j$. Let $M = pj - r$. The supported range gives $0 < M \leq s$. Using the definitions of $a_k$ and $c_j$, we obtain
$$G_j = (-1)^{r-j} \sum_{k=0}^{s-1} \binom{-sP-1}{k} \binom{jP}{M-k}.$$

If $M < s$, extending the sum to all $k \geq 0$ adds only zero terms. If $M = s$, the only additional possible term vanishes in $\F_p$ by Lucas' theorem:
$$\binom{-sP-1}{s} = (-1)^s\binom{sp}{s} = 0.$$
Hence, in either case, Vandermonde's identity gives
$$G_j = (-1)^{r-j} \binom{(j-s)P-1}{M}.$$

Since $M - \bigl((j-s)P-1\bigr) = j-d+1 > 0$, the lower index is larger than the nonnegative upper index. Therefore, $G_j = 0$. It follows that $C_j = I_j + S_j + G_j = 0$ for every $j > s$ in the supported range.
\end{proof}

\subsection{The Bockstein case}
\label{subsec:bockstein-case}

We now pass from the case $\epsilon=0$ to the case $\epsilon=1$.

\begin{Prop}
\label{bocksteinCoefficientProp}
For each $0\leq j\leq \lfloor(r+s)/p\rfloor$, the coefficient of $\beta Q^{r+s-j}\wr Q^j$ in $\mathcal D(r,s;1)$ is the same coefficient $C_j$ computed for $\mathcal D(r,s;0)$. Moreover, the coefficient of $Q^{r+s-j}\wr\beta Q^j$ is zero.
\end{Prop}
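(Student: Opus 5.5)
The natural approach is to exploit that every map in the Cartan sum commutes with the homological Bockstein. Since $X(r,s;1)=\beta X(r,s;0)$ by Lemma~\ref{preimageLem}, and each $(c_{g^{-1}})_*$, $(\iota_\Delta)_*$ and $i_*$ is induced by a group homomorphism, naturality gives
\[
\mathcal D(r,s;1)=\beta\,\mathcal D(r,s;0).
\]
By Lemma~\ref{supportLem} and Proposition~\ref{coefficientDecompositionProp},
\[
\mathcal D(r,s;0)=\sum_{j} C_j\, e_{2(r+s-pj)P}\otimes e_{2jP}^{\otimes p}
\]
in $H_*(\Sigma_p\wr\Sigma_p)$. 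The problem then reduces to computing $\beta$ of each pure wreath tensor $e_{2(r+s-pj)P}\otimes e_{2jP}^{\otimes p}$ and expressing the result in the basis \eqref{wrbasis}.

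The key step repeats the final argument of Lemma~\ref{preimageLem} for each $j$. By the Cartan formula,
\[
\beta\bigl(e_{2(r+s-pj)P}\otimes e_{2jP}^{\otimes p}\bigr)
=e_{2(r+s-pj)P-1}\otimes e_{2jP}^{\otimes p}+e_{2(r+s-pj)P}\otimes\beta\bigl(e_{2jP}^{\otimes p}\bigr).
\]
The second summand vanishes: for $j=0$ trivially, and for $j\ge1$ it expands into tensors with nonconstant inner factors, which occur in $H_*(C_p\wr H)$ only over the outer class $e_0$. This requires $r+s-pj>0$; when $r+s=pj$ exactly the outer class is $e_0$, and there I would check directly that the cyclic sum $\sum_a e_{2jP}^{\otimes(a-1)}\otimes e_{2jP-1}\otimes e_{2jP}^{\otimes(p-a)}$ becomes $p$ times a single mixed class, hence zero. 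In all cases the pure family of type $Q^{r+s-j}\wr\beta Q^j$, with outer index $(2(r+s-pj)+1)P$, never appears. The first term is exactly the tensor representing $\beta Q^{r+s-j}\wr Q^j$, with the same normalization $\nu_{r,s}$, and it vanishes when $r+s-pj=0$, consistent with the support range $j\le\lfloor (r+s-1)/p\rfloor$. So the coefficient of $\beta Q^{r+s-j}\wr Q^j$ is $C_j$.

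The main obstacle is justifying that $\beta$ acts on $H_*(C_p\wr H)$ compatibly with the tensor description. The cleaner route is to first apply $\beta$ in $H_*(C_p\times C_p)$, before $(\iota_\Delta)_*$, and then use May's formula (Lemma~\ref{Mayformula}). Each $(\iota_\Delta)_*(e_a\otimes e_b)$ is a combination of pure tensors only, so no mixed classes ever arise. The cost is bookkeeping of the $\delta(x)\delta(y-1)$ correction terms that produce $e_{\text{even}}\otimes e_{\text{odd}}^{\otimes p}$, and I would show these cancel using the same triangular identities that define $a_k$. If this bookkeeping becomes heavy, I would fall back on naturality with the chain-level Cartan formula for $\beta$ on $W\otimes_{C_p}(C_*)^{\otimes p}$, as already used in Lemma~\ref{preimageLem}.
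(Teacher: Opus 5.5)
Your proposal is correct and follows essentially the paper's proof: naturality gives $\mathcal D(r,s;1)=\beta\,\mathcal D(r,s;0)$. The Cartan formula for $\beta$ then acts on each pure tensor $e_{2(r+s-pj)P}\otimes e_{2jP}^{\otimes p}$, and the inner-Bockstein term dies because nonconstant inner tensors are supported only over the outer class $e_0$.

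The one difference is the edge case $r+s=pj$, where the outer index is $0$. The paper avoids it by invoking $C_j=0$ for $j>s$, so only $j\le s$ matters, and there the outer degree is at least $2dP>0$. You instead kill that term directly, since in coinvariants the cyclic sum is $p$ times a single mixed class; this is also valid. Your May-formula fallback in the last paragraph is not needed.
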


\begin{proof}
By Lemma~\ref{preimageLem}, $X(r,s;1)=\beta X(r,s;0)$. The Bockstein commutes with all homomorphisms induced by conjugation and inclusion. Therefore, $\mathcal{D}(r,s;1) = \beta\mathcal D(r,s;0)$.

By Propositions~\ref{interiorCoefficientProp} and
\ref{boundaryCoefficientProp},
$$
\mathcal D(r,s;0)
=
\sum_j
C_j\,
e_{2(r+s-pj)P}
\otimes
e_{2jP}^{\otimes p},
$$
where the sum is taken over the supported values of $j$ in Lemma \ref{supportLem}.

Since both lower indices are even, the Cartan formula for the Bockstein
gives
$$
\begin{aligned}
\beta\left(
e_{2(r+s-pj)P}
\otimes
e_{2jP}^{\otimes p}
\right)
&=
e_{2(r+s-pj)P-1}
\otimes
e_{2jP}^{\otimes p}
\\
&\quad+
e_{2(r+s-pj)P}
\otimes
\beta\left(e_{2jP}^{\otimes p}\right).
\end{aligned}
$$
The second summand vanishes. Indeed, if $j=0$, then $\beta\bigl(e_0^{\otimes p}\bigr)=0$. If $j\geq1$, the Cartan formula gives$$\beta\left(e_{2jP}^{\otimes p}\right)
=
\sum_{a=1}^{p}
e_{2jP}^{\otimes(a-1)}
\otimes e_{2jP-1}
\otimes e_{2jP}^{\otimes(p-a)}.$$
In each summand, the inner tensor factors are not purely identical. By the basis description in \eqref{wrbasis}, a class with nonconstant inner factors can occur only when the outer factor is $e_0$.
On the other hand, by Propositions \ref{interiorCoefficientProp} and \ref{boundaryCoefficientProp}, $C_j=0$ for $j>s$. Thus any term with $C_j\neq 0$ has $j\leq s$, and for such a term
$$
2(r+s-pj)P \geq 2(r+s-ps)P = 2dP > 0.
$$
Hence the outer factor has positive degree, so $e_{2(r+s-pj)P} \otimes \beta\left(e_{2jP}^{\otimes p}\right) = 0$.
Therefore,
$$\beta\left( e_{2(r+s-pj)P}\otimes e_{2jP}^{\otimes p} \right) = e_{2(r+s-pj)P-1}\otimes e_{2jP}^{\otimes p}.$$
This is the lower-index representative of $\beta Q^{r+s-j}\wr Q^j$. Therefore, its coefficient is $C_j$, while no term of the form $Q^{r+s-j}\wr\beta Q^j$ occurs.
\end{proof}

\subsection{Proof of the wreath transfer formula for odd primes}
\label{subsec:proof-main-theorem}

In this subsection, we prove the main theorem of the paper and then derive Corollary \ref{KjaerCor}.

\begin{proof}[Proof of Theorem~\ref{mainThm}]
By Lemma~\ref{supportLem}, the total double-coset sum $\mathcal D(r,s; \epsilon)$ can only contain terms corresponding to $\beta^\epsilon Q^{r+s-j}\wr Q^j$ for $0\leq j\leq \lfloor(r+s)/p\rfloor$, together, when $\epsilon=1$, with possible inner-Bockstein terms $Q^{r+s-j}\wr\beta Q^j$.

For $\epsilon=0$, Propositions~\ref{interiorCoefficientProp} and
\ref{boundaryCoefficientProp} give
$$
C_j=
\begin{cases}
(-1)^{r-j}
\displaystyle
\binom{(j-s)(p-1)-1}{r-(p-1)s-j-1},
& 0\leq j<s,\\[10pt]
1,
& j=s,\\[4pt]
0,
& j>s.
\end{cases}
$$

For $\epsilon=1$, Proposition~\ref{bocksteinCoefficientProp} shows that the coefficients of $\beta Q^{r+s-j}\wr Q^j$ are the same $C_j$, and that all coefficients of $Q^{r+s-j}\wr\beta Q^j$ are zero.

It remains to restore the common normalization factor. By the definition of the upper-index wreath classes, the lower-index representative $e_{2(r+s-pj)(p-1)-\epsilon}\otimes e_{2j(p-1)}^{\otimes p}$ corresponds to $\beta^\epsilon Q^{r+s-j}\wr Q^j$ after multiplication by $\nu_{r+s-j,j}$. Since $(r+s-j)+j=r+s$ and $v(2j(p-1)) \equiv v(2s(p-1)) \equiv 1$, we have $\nu_{r+s-j,j}=\nu_{r,s}$.

Therefore, we have
$$
\begin{aligned}
    \tr\bigl(\beta^\epsilon Q^r\circ Q^s\bigr) 
    & = \nu_{r,s}\mathcal D(r,s; \epsilon)\\
    & = C_s \beta^\epsilon Q^r \wr Q^s + \sum_{0 \leq j < s} C_j \beta^\epsilon Q^{r+s-j} \wr Q^j\\
    & = \beta^\epsilon Q^r\wr Q^s
+
\sum_{0\leq j<s}
(-1)^{r-j}
\binom{(j-s)(p-1)-1}{r-(p-1)s-j-1}
\beta^\epsilon Q^{r+s-j}\wr Q^j,
\end{aligned}
$$
where the first equality is by the argument before Lemma \ref{supportLem} and the second equality is due to $\nu_{r+s-j,j}=\nu_{r,s}$.
\end{proof}

We then give a proof of Kjaer's conjecture.
\begin{proof}[Proof of Corollary~\ref{KjaerCor}]
By the Arone--Mahowald calculation, as used in \cite[Remark 4.4]{Kjaer}, there is a
surjection
$$
\pi:
\Sigma^{-2}H_*\bigl((\mathbb{S}^{2l+1})^{\wedge p^2}_{h(\Sigma_p\wr\Sigma_p)};\mathbb F_p\bigr)
\longrightarrow
H_*(\mathbb D_{p^2}(\mathbb{S}^{2l+1});\mathbb F_p)
$$
whose kernel, after identifications, is the image of the transfer
$$
\tr: H_*(\Sigma_{p^2}; \F_p) \longrightarrow H_*(\Sigma_p \wr \Sigma_p; \F_p).
$$
Now, we have Theorem \ref{mainThm} gives an element of $\im(\tr)$ of the
form
$$
\beta^\epsilon Q^r\wr Q^s
+
\sum_{0\leq j<s}
(-1)^{r-j}
\binom{(j-s)(p-1)-1}{r-(p-1)s-j-1}
\beta^\epsilon Q^{r+s-j}\wr Q^j.
$$
Applying $\pi$ gives zero. Therefore, in
$H_*(\mathbb D_{p^2}(\mathbb{S}^{2l+1});\mathbb F_p)$,
$$
\beta^\epsilon Q^rQ^s\iota
=
-\sum_{0\leq j<s}
(-1)^{r-j}
\binom{(j-s)(p-1)-1}{r-(p-1)s-j-1}
\beta^\epsilon Q^{r+s-j}Q^j\iota.
$$
\end{proof}

\section{The shear--swap coefficient cancellation}\label{shearswapcollapsesection}
The goal of this section is to prove Proposition \ref{interiorCoefficientProp}. In the double-coset calculation, the coefficient $C_j$ is a sum of three contributions:
$C_j=I_j+S_j+G_j.$
The main difficulty is the evaluation of the shear contribution $S_j$ and the swap contribution $G_j$ for $0 \leq j < s$, where the identity contribution $I_j$ vanishes. We handle this by a generating-function argument. 
The coefficient families appearing in the shear and swap terms can be represented by the series $A_s(z)$, $C_j(z)$, and their product $F_m(z)=(1-z)^{-m(p-1)-1}$. The shear term is then rewritten as
\[
S_j=-[z^m]F_m(z)D_{A_0}(z),
\]
where $D_{A_0}(z)$ is a $P$-section polynomial. A separate coefficient-extraction argument expresses the swap term as
\[
G_j=[z^m]F_m(z)(D_{A_0}(z)-z^{A_0}).
\]
Adding the two expressions cancels the common $D_{A_0}(z)$-term and leaves a single coefficient of $F_m(z)$, which gives the stated binomial coefficient.

\subsection{Notation and conventions}
To make the calculation more readable, we first establish some notation. Let $p$ be an odd prime, and work throughout over $\mathbb{F}_p$. Set $P := p-1$. Thus, $P$ is even and $P \equiv -1 \pmod p$.

Throughout this section, let $s$ and $r$ be integers satisfying $(p-1)s < r \leq ps$. Equivalently, we may write $r = Ps + d$ for $1 \leq d \leq s$. Fix an integer $0 \leq j < s$. We define the variables $m := s-j$, $N := d-j-1$, and $A_0 := s+1-d$. It follows that $N = m-A_0$.

We adopt the following conventions. First, if $$H(z) = \sum_{q \geq 0} h_q z^q \in \mathbb{F}_p[[z]],$$ then $[z^q]H(z) := h_q$ for $q \geq 0$, and we set $[z^q]H(z) := 0$ for $q < 0$. We use the same notation for polynomials, viewed as formal power series. All infinite sums and geometric-series expansions below are understood as identities of formal power series.

Second, we adopt the standard generalized binomial convention: $\binom{M}{q} = 0$ if $q < 0$. For a nonnegative upper index $M$, we also use the ordinary convention that $\binom{M}{q} = 0$ if $q > M$. For negative upper indices, the binomial coefficient does not automatically vanish.

\subsection{Generating functions for the coefficient families}
Both $S_j$ and $G_j$ are built from the coefficient families $a_k$ and $c_a(q)$. We first encode these families by generating functions, since the rest of the argument is a sequence of coefficient extractions.

From Lemma \ref{preimageLem}, we know that the coefficients take the form $a_k = (-1)^k\binom{-sP-1}{k}$ for $k \geq 0$. Their generating function is
$$A_s(z) := \sum_{k\geq 0} a_k z^k = (1-z)^{-sP-1}.$$
Moreover, from Proposition \ref{coefficientDecompositionProp}, for every integer $a \geq 0$, we have $c_a(q) = (-1)^q\binom{aP}{q}$ for $q \in \mathbb{Z}$, with the convention that $c_a(q) = 0$ for $q < 0$. The generating function of $c_a$ is
$$C_a(z) := \sum_{q\geq 0} c_a(q) z^q = (1-z)^{aP}.$$
Multiplying these two generating functions with $a=j$, we get
$$
A_s(z)C_j(z)=(1-z)^{-sP-1}(1-z)^{jP}=(1-z)^{-(s-j)P-1}.
$$

This motivates the following notation.

\begin{Def}[Combined coefficient series]
For each integer $M\geq 0$, define
$$
F_M(z):=(1-z)^{-MP-1}\in \mathbb F_p[[z]].
$$
Equivalently,
$$
F_M(z)
=
\sum_{q\geq 0}\binom{MP+q}{q}z^q
=
\sum_{q\geq 0}(-1)^q\binom{-MP-1}{q}z^q.
$$
\end{Def}

With the special value $M=m=s-j$, the product above becomes
$$
A_s(z)C_j(z)=F_m(z).
$$
Therefore, for every $q\in\mathbb Z$,
$$
\sum_{k\geq 0}a_kc_j(q-k)
=
[z^q]F_m(z),
$$
where the right-hand side is interpreted as zero if $q<0$.

By Proposition \ref{coefficientDecompositionProp},
the shear contribution is
$$
S_j=-\sum_{\substack{k,h\geq 0\\ k+h\leq m}}
a_k\binom{(r+k+h)P}{hP}c_j(m-k-h)
$$
and the swap contribution is
$$
G_j=\sum_{k\geq 0}a_kc_j(r+k-j). 
$$

Note that we have extended the $k$-sums beyond the original range $0 \leq k \leq s-1$. This does not change the value modulo $p$. For the shear term, if $j > 0$, then $m = s-j < s$, so no term with $k \geq s$ can satisfy $k+h \leq m$. If $j = 0$, the only additional possible term is $k = s$ and $h = 0$, and by Lucas' theorem, its coefficient is $a_s = (-1)^s\binom{-sP-1}{s} = \binom{sp}{s} \equiv 0 \pmod p$. For the swap term, terms with $k \geq s$ vanish because $r+k-j > jP$, which forces $c_j(r+k-j) = 0$.

\subsection{Computation of the shear term}
We first treat the shear term $S_j$. The main point is that after reindexing the sum by $n=k+h$, the inner sum can be recognized as the coefficient of a product involving a $P$-section polynomial.

We now introduce the $P$-section polynomial $D_A(z)$, which records only the coefficients whose exponents are divisible by $P$.

\begin{Def}
    For $A\geq 1$, define the $P$-section polynomial
    $$
    D_A(z):=\sum_{u=0}^A\binom{AP}{uP}z^u. 
    $$
    For the generating-function proof below, we also put $D_0(z):=1$.
\end{Def}

Since $P=p-1$ is even, $[t^{uP}](1-t)^{AP}=(-1)^{uP}\binom{AP}{uP}=\binom{AP}{uP}$. Thus, 
$$D_A(z)=\sum_{u\geq 0}[t^{uP}](1-t)^{AP}z^u.$$

\begin{Prop}\label{shearcontri}
    With the notation above, the shear contribution is
    $$
    S_j = - [z^m]F_m(z)D_{A_0}(z).
    $$
\end{Prop}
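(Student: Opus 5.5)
The plan is to turn the double sum in Proposition~\ref{coefficientDecompositionProp} into a single coefficient extraction. The only ingredient not already a product of the series $A_s(z)$ and $C_j(z)$ is the binomial $\binom{(r+k+h)P}{hP}$, so the work is to reduce it modulo $p$.

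\emph{Step 1: reindex.} Put $n=k+h$, so that
\[
S_j=-\sum_{n=0}^{m} c_j(m-n)\,T_n,\qquad T_n:=\sum_{h=0}^{n} a_{n-h}\binom{(r+n)P}{hP}.
\]
Since $\sum_{n}c_j(m-n)[z^n]G(z)=[z^m]C_j(z)G(z)$ and $A_s(z)C_j(z)=F_m(z)$, it suffices to prove, for $0\le n\le m$,
\[
T_n=[z^n]\,A_s(z)\,D_{A_0}(z).
\]
Equivalently, one could prove the weaker statement that the two series agree after multiplication by $C_j(z)$ and truncation at $z^{m}$.

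\emph{Step 2: reduce the binomial modulo $p$.} Write $\binom{(r+n)P}{hP}=[t^{hP}](1-t)^{(r+n)P}$. In $\F_p[[t]]$ use
\[
(1-t)^{(r+n)P}=(1-t^p)^{r+n}(1-t)^{-(r+n)}.
\]
Since $r=Ps+d$ and $P\equiv -1\pmod p$, we have $r+n\equiv d-s+n\pmod p$. For $h\le n\le m=s-j$, the exponent $hP$ is small relative to $rP$. The aim is to show that only a controlled number of terms of $(1-t^p)^{r+n}$ survive in degree $hP$, so that $\binom{(r+n)P}{hP}$ becomes a Vandermonde-type convolution. Concretely, I would try to prove
\[
\binom{(r+n)P}{hP}\equiv\sum_{u}\binom{A_0P}{uP}\,\beta(n,h,u),
\]
where $\beta(n,h,u)$ is a coefficient of a negative binomial power of $(1-z)$ depending only on $n-h$ and $h-u$. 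The natural attempt is Vandermonde applied to the split $(r+n)P=A_0P+(r+n-A_0)P$, followed by the $P$-section identity $\sum_{\lambda\in\F_p^\times}\lambda^l=-[P\mid l]$, already used in Proposition~\ref{coefficientDecompositionProp}, to discard the indices $i$ in $\binom{A_0P}{i}$ that are not multiples of $P$.

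\emph{Step 3: reassemble.} Substitute this congruence into $T_n$ and sum over $h$ against $a_{n-h}$. The leftover factor should combine with $A_s(z)=(1-z)^{-sP-1}$ by the negative-exponent Vandermonde identity, exactly as in the proofs of Lemma~\ref{preimageLem} and Proposition~\ref{boundaryCoefficientProp}. The expected outcome is $[z^n]A_s(z)D_{A_0}(z)$. This step should also explain why $A_0=s+1-d$ appears: it should be the effective upper parameter once $rP$ is reduced modulo $p$ and the range $n\le m$ is imposed. Terms with $n>m$, or terms beyond the truncation, should vanish by Lucas' theorem, as in the boundary cases already handled.

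\emph{Main obstacle.} The hard step is Step~2: finding the correct mod-$p$ congruence that replaces the $n$-dependent upper index $(r+n)P$ by the fixed one $A_0P$. If a clean term-by-term identity for $T_n$ fails, I would instead prove the identity at the level of $[z^m]$ only. That means using the factor $C_j(z)$ and the truncation at $z^m$, which forces $h\le m$, so that any discrepancy in the reduction lives in degrees above $m$ and is killed.
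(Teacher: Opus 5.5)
\textbf{There is a genuine gap.} Your Step~1 is correct and is exactly how the paper begins. With $n=k+h$, your $T_n$ is the paper's $H_n$, and the paper also reduces the proposition to $\sum_{n\ge 0}T_nz^n=A_s(z)D_{A_0}(z)$. That identity in fact holds for every $n$, so no truncation is needed.

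The gap is Step~2, which carries all the content, and the mechanism you propose fails. In Proposition~\ref{coefficientDecompositionProp} the $\lambda$-average is used up in forcing $l=hP$. After that, no character sum remains. So nothing justifies discarding the terms $\binom{A_0P}{i}$ with $P\nmid i$ in the Vandermonde split $(r+n)P=A_0P+(r+n-A_0)P$, and those terms genuinely matter.

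Take $p=5$, $s=2$, $r=9$, so $d=1$ and $A_0=2$. Take $j=0$ (so $m=2$), $n=2$, $h=1$. Then $\binom{44}{4}\equiv 1$, but $\sum_u\binom{8}{4u}\binom{36}{4-4u}=\binom{36}{4}+\binom{8}{4}\equiv 0$. The whole value comes from the $i=3$ term $\binom{8}{3}\binom{36}{1}\equiv 1$. Also $\binom{44}{4}\not\equiv\binom{8}{4}$, so the identity is not termwise in $h$: only the convolution $T_n$ matches.

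Some kernel $\beta(n-h,h-u)$ with $\binom{(r+n)P}{hP}=\sum_u\binom{A_0P}{uP}\beta(n-h,h-u)$ always exists for trivial reasons. Since $D_{A_0}(z)$ has constant term $1$, you can set $\beta_k(z):=D_{A_0}(z)^{-1}\sum_h\binom{(r+k+h)P}{hP}z^h$. With this $\beta$, your Step~3 assertion $\sum_k a_kz^k\beta_k(z)=A_s(z)$ is equivalent to the proposition itself. You never produce a $\beta$ of the special binomial shape you want. So Steps~2--3 restate the problem rather than solve it, and the truncation fallback has no argument behind it.

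The missing idea is to absorb the $a_k$ before touching the binomial, and then remove the $n$-dependence of the exponent by a change of variables.

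1. Since $\sum_k a_kx^{kP}=(1-x^P)^{-sP-1}$, you have $T_n=[x^{nP}](1+x)^{(r+n)P}(1-x^P)^{-sP-1}$.
2. The substitution $x=t/(1-t)$ gives $[x^M]\Phi(x)=[t^M]\Phi\bigl(t/(1-t)\bigr)(1-t)^{M-1}$, with $1+x=(1-t)^{-1}$.
3. In $\F_p$, $1-x^P=(1-t^P)/(1-t)^{P+1}$, because $(1-t)^P-t^P=1+t+\dots+t^{P-1}$.
4. The exponent of $(1-t)$ then collapses to exactly $A_0P$, by the integer identity $-rP-1+p(sP+1)=(s+1-d)P$.

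Hence $T_n=[t^{nP}](1-t)^{A_0P}(1-t^P)^{-sP-1}=[z^n]A_s(z)D_{A_0}(z)$ for all $n$. This is where $A_0$ really comes from, and with it your Step~1 finishes the proof.
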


\begin{proof}
Start from
$$
S_j= -
\sum_{\substack{k,h\geq 0\\ k+h\leq m}}
a_k\binom{(r+k+h)P}{hP}c_j(m-k-h).
$$
Set $n:=k+h$. Then $0\leq n\leq m$ and $h=n-k$. Hence,
$$S_j= - \sum_{n=0}^m c_j(m-n)H_n,$$
where $H_n:=\sum_{k=0}^n a_k\binom{(r+n)P}{(n-k)P}$.
We compute $H_n$. Since $\binom{(r+n)P}{(n-k)P}=[x^{(n-k)P}](1+x)^{(r+n)P}$, we have
$$
\begin{aligned}
H_n
&=\sum_{k=0}^n a_k[x^{(n-k)P}](1+x)^{(r+n)P}\\
&=[x^{nP}](1+x)^{(r+n)P}\sum_{k\geq 0}a_kx^{kP}.
\end{aligned}
$$
Since $A_s(z) =\sum_{k \geq 0} a_k z^k = (1-z)^{-sP-1}$, this gives
$$H_n=[x^{nP}](1+x)^{(r+n)P}(1-x^P)^{-sP-1}.$$
Now use the standard coefficient-change formula for the substitution $x=t/(1-t)$:
$$[x^M]\Phi(x)=[t^M]\Phi\left(\frac{t}{1-t}\right)(1-t)^{M-1}. $$
Apply this with $M=nP$ and $\Phi(x)=(1+x)^{(r+n)P}(1-x^P)^{-sP-1}$. Since $1+x=\frac{1}{1-t}$, and, in $\mathbb{F}_p$,
$$
\begin{aligned}
1-x^P
&=1-\frac{t^P}{(1-t)^P}\\
&=\frac{(1-t)^P-t^P}{(1-t)^P}\\
&=\frac{1+t+\cdots+t^{P-1}}{(1-t)^P}\\
&=\frac{1-t^P}{(1-t)^{P+1}},
\end{aligned}
$$
we get
$$
\begin{aligned}
H_n
&=[t^{nP}](1-t)^{-(r+n)P}
\left(\frac{1-t^P}{(1-t)^{P+1}}\right)^{-sP-1}(1-t)^{nP-1}\\
&=[t^{nP}](1-t)^{-(r+n)P+nP-1+(P+1)(sP+1)}(1-t^P)^{-sP-1}.
\end{aligned}
$$
The exponent of $(1-t)$ simplifies as follows:
$$
\begin{aligned}
&-(r+n)P+nP-1+(P+1)(sP+1)\\
&=-rP-1+p(sP+1)\\
&=-(Ps+d)P-1+p(sP+1)\\
&=-sP^2-dP-1+psP+p\\
&=sP-dP+P\\
&=(s+1-d)P=A_0P.
\end{aligned}
$$
Therefore
$$
H_n=[t^{nP}](1-t)^{A_0P}(1-t^P)^{-sP-1}. 
$$
Since $D_{A_0}(z) = \sum_{u \geq 0}[t^{uP}] (1-t)^{A_0P}z^u$ and $A_s(z) = (1-z)^{-sP-1}$, this implies
$$
H (z) := \sum_{n\geq 0}H_nz^n=D_{A_0}(z)A_s(z).
$$
Finally, 
$$
\begin{aligned}
S_j
&= - \sum_{n=0}^m c_j(m-n)H_n\\
&=- [z^m]C_j(z)\sum_{n\geq 0}H_nz^n\\
&=- [z^m]C_j(z)D_{A_0}(z)A_s(z)\\
&=- [z^m]F_m(z)D_{A_0}(z),
\end{aligned}
$$
where the second equality holds because $$C_j(z) H(z) = (\sum_{q \geq 0} c_j(q) z^{q})(\sum_{n\geq 0}H_nz^n) = \sum_{q, n \geq 0} c_j(q) H_n z^{q+n},$$
and, taking the coefficient of $z^m$ and setting $q = m-n$, we have 
$$[z^m]C_j(z)H (z) = \sum_{q+n = m} c_j(q) H_n = \sum_{n=0}^{m} c_j(m-n) H_n.$$
\end{proof}

\subsection{Computation of the swap term}
We next treat the swap term $G_j$. The same polynomial $D_{A_0}(z)$ will appear again, but now through the difference $D_{A_0}(z)-z^{A_0}$. 

We first express $D_A(z)$ by an averaging formula over $\F_p^\times$.

\begin{Lem}\label{averagingDA} 
    Introduce a formal symbol $y$ with $y^P=z$. Then
    $$D_A(z) = \frac{1}{P} \sum_{\eta \in \F_p^{\times}} (1 + \eta y)^{AP}.$$
\end{Lem}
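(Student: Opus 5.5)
The plan is to expand the right-hand side binomially and average over $\eta$ using the character-sum identity already used in the proof of Proposition \ref{coefficientDecompositionProp}. Concretely, for each $\eta\in\F_p^\times$ we write
$$
(1+\eta y)^{AP}=\sum_{l=0}^{AP}\binom{AP}{l}\eta^l y^l,
$$
which is a polynomial in the formal symbol $y$. Summing over $\eta$ and interchanging the finite sums gives
$$
\sum_{\eta\in\F_p^\times}(1+\eta y)^{AP}=\sum_{l=0}^{AP}\binom{AP}{l}\Bigl(\sum_{\eta\in\F_p^\times}\eta^l\Bigr)y^l.
$$

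The key input is the standard identity
$$
\sum_{\eta\in\F_p^\times}\eta^l=-1 \quad\text{if } P\mid l,
$$
and $0$ otherwise, which holds because $\F_p^\times$ is cyclic of order $P$. Thus only the exponents $l=uP$ with $0\le u\le A$ survive, each with the factor $-1$. Since $y^{uP}=z^u$, the sum collapses to
$$
-\sum_{u=0}^{A}\binom{AP}{uP}z^u=-D_A(z).
$$
The result is a genuine polynomial in $z$, so the formal symbol $y$ disappears.

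It remains to divide by $P$. In $\F_p$ we have $P=p-1\equiv-1$, so $\frac1P=-1$, which is well defined because $p\nmid P$. Multiplying $-D_A(z)$ by $-1$ gives exactly $D_A(z)$. For the convention $D_0(z)=1$, the same computation with $A=0$ gives $\frac1P\cdot P\cdot 1=1$, which is consistent.

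There is no real obstacle here. The one point needing care is the formal setup: the identity should be read in $\F_p[y]$, with $\F_p[z]$ embedded via $z=y^P$, so that the interchange of sums is legitimate and the final expression lies in $\F_p[z]$. We would state this interpretation explicitly at the start of the proof.
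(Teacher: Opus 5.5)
Your proposal is correct and is essentially the paper's own argument: expand binomially, interchange the finite sums, and apply the character-sum identity over $\F_p^\times$ so that only the exponents $l=uP$ survive. The only difference is bookkeeping: the paper folds the factor $\frac{1}{P}$ directly into the identity, writing $\frac{1}{P}\sum_{\eta}\eta^l\in\{0,1\}$, whereas you use $\sum_\eta \eta^l=-1$ and divide by $P\equiv -1$ at the end.
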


\begin{proof}
    Averaging over $\eta \in \mathbb F_p^\times$, we obtain
    $$\frac{1}{P} \sum_{\eta \in \F_p^{\times}} (1 + \eta y)^{AP} = \sum_{l = 0}^{AP} \binom{AP}{l} \Big(\frac{1}{P} \sum_{\eta \in \F_p^{\times}} \eta^l
    \Big) y^l.$$
    Now, $$
    \frac{1}{P} \sum_{\eta \in \F_p^{\times}} \eta^l
    =
    \begin{cases}
    1, & P\mid l,\\
    0, & P\nmid l.
    \end{cases}
    $$
    Therefore only terms with $l=uP$ survive. Hence
    $$\frac{1}{P} \sum_{\eta \in \F_p^{\times}} (1 + \eta y)^{AP} = \sum_{u = 0}^{A} \binom{AP}{uP} y^{uP} = \sum_{u=0}^A\binom{AP}{uP}z^u=D_A(z).$$
\end{proof}

We shall use the following identity later.

\begin{Lem}\label{HermiteId}
    The following identity holds in $\mathbb{F}_p[T,z]$:
$$
(1-z)\bigl((1-T)^P-z(1-z)^PT^p\bigr)=(1-Tz)\bigl((1-T)^P-z(1-Tz)^P\bigr). 
$$
\end{Lem}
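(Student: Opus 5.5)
The plan is to reduce both sides using the Frobenius identity $(1-x)^p = 1 - x^p$, which holds in any commutative $\mathbb F_p$-algebra and in particular in $\mathbb F_p[T,z]$. Since $P+1=p$, every occurrence of a factor $(1-x)$ multiplied by $(1-x)^P$ collapses to $1-x^p$. The identity should then follow by expanding and comparing, with no division needed, so we can stay inside the polynomial ring.

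First, I will expand the left-hand side. Distributing $(1-z)$ gives
\[
(1-z)(1-T)^P - z(1-z)^{P+1}T^p = (1-z)(1-T)^P - z(1-z^p)T^p .
\]
Second, I will expand the right-hand side in the same way:
\[
(1-Tz)(1-T)^P - z(1-Tz)^{P+1} = (1-Tz)(1-T)^P - z(1-T^pz^p).
\]

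Third, I will subtract the second expression from the first. The terms $\pm zT^pz^p$ cancel, and the $(1-T)^P$ terms combine to $(1-T)^P\bigl((1-z)-(1-Tz)\bigr) = -z(1-T)^{P+1}$. This leaves
\[
z\bigl(1 - T^p - (1-T)^{p}\bigr),
\]
which vanishes by Frobenius once more.

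There is no real obstacle here. The only points requiring care are the bookkeeping of signs and the observation that $(1-z)^P\cdot(1-z)$ and $(1-Tz)^P\cdot(1-Tz)$ are exactly $p$-th powers. Because we only multiply out and never divide, the identity holds in $\mathbb F_p[T,z]$ itself, as the statement requires, and not merely in some localization.
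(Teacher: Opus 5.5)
Your proof is correct and is essentially the paper's argument: subtract the two sides, combine the $(1-T)^P$ terms into $\pm z(1-T)^p$, and finish with the Frobenius identity. The only cosmetic difference is that the paper applies Frobenius once, as $(1-Tz)^p - T^p(1-z)^p = \bigl((1-Tz)-T(1-z)\bigr)^p = (1-T)^p$, whereas you expand each $p$-th power separately and then cancel.
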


\begin{proof}
    Over $\F_p$, we have the standard identity $X^p-Y^p=(X-Y)^p$.
    
Let $R$ denote the right-hand side and $L$ the left-hand side of the desired identity. Then
$$
\begin{aligned}
R-L
&=
\bigl((1-Tz)-(1-z)\bigr)(1-T)^P
-z(1-Tz)^{P+1}
+z(1-z)^{P+1}T^p \\
&=
z(1-T)^p
-z\bigl((1-Tz)^p-T^p(1-z)^p\bigr).
\end{aligned}
$$
Since in $\F_p$ we have
$$
(1-Tz)^p-T^p(1-z)^p
=
\bigl((1-Tz)-T(1-z)\bigr)^p
=
(1-T)^p,
$$
we get $R-L=0$. Therefore the identity holds.
\end{proof}

The next lemma is the key structural identity for $D_A(z)$. It shows that $D_A(z)-z^A$ is a polynomial in the single expression $z(1-z)^P$. This is the form that will allow the swap contribution to be compared directly with the shear contribution.

\begin{Lem}\label{DA-ZLem}
    For every $A\geq 1$, in $\mathbb{F}_p[z]$,
$$
\begin{aligned}
D_A(z)-z^A
& = \sum_{\omega\geq 0}[T^{A-p\omega-1}](1-T)^{-P\omega-1}\bigl(z(1-z)^P\bigr)^\omega \\
& = \sum_{0\leq \omega\leq \left\lfloor (A-1)/p\right\rfloor}
\binom{A-1-\omega}{P\omega}\bigl(z(1-z)^P\bigr)^\omega. 
\end{aligned}
$$
\end{Lem}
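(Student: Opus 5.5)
The plan is to prove both equalities by generating functions in an auxiliary variable $T$, summing over $A$. The second equality is a one-line binomial manipulation. By the generalized binomial convention,
$$[T^{A-p\omega-1}](1-T)^{-P\omega-1}=\binom{P\omega+A-p\omega-1}{A-p\omega-1}=\binom{A-1-\omega}{A-p\omega-1}=\binom{A-1-\omega}{P\omega},$$
and this vanishes when $A-p\omega-1<0$, which gives the range $0\le\omega\le\lfloor (A-1)/p\rfloor$. All the work is in the first equality. Since both sides are polynomials in $z$ for each $A$, it suffices to show that the two generating series $\sum_{A\ge 1}(\cdot)\,T^A$ agree in $\mathbb F_p[z][[T]]$.

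\textbf{The right-hand side.} Put $w:=z(1-z)^P$. Summing over $A$ first turns each coefficient extraction into a shift:
$$\sum_{A}T^A\sum_{\omega}[T^{A-p\omega-1}](1-T)^{-P\omega-1}w^\omega=\sum_{\omega\ge0}\frac{T^{p\omega+1}w^\omega}{(1-T)^{P\omega+1}}=\frac{T(1-T)^{P-1}}{(1-T)^P-wT^p}.$$
Adding $\sum_{A\ge1}z^AT^A=Tz/(1-Tz)$ gives the target closed form for $\sum_{A\ge1}D_A(z)T^A$.

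\textbf{The left-hand side.} For this I will use Lemma \ref{averagingDA} with $y^P=z$. Then
$$\sum_{A\ge0}D_A(z)T^A=\frac1P\sum_{\eta}\frac{1}{1-T(1+\eta y)^P}.$$
Over $\mathbb F_p$ we have $(1+\eta y)^p=1+\eta y^p=1+\eta zy$, so $(1+\eta y)^P=(1+\eta zy)/(1+\eta y)$. Each summand therefore becomes $f(\eta y)$, where
$$f(u)=\frac{1+u}{(1-T)+u(1-Tz)}.$$
Averaging over $\eta$ extracts the $P$-section in $u$, with $u^P$ replaced by $z$. Writing $f(u)=\frac{1+u}{1-T}\sum_n\bigl(-u\tfrac{1-Tz}{1-T}\bigr)^n$, only two families survive: the terms $n\equiv 0\pmod P$ from the factor $1$, and the terms $n\equiv P-1\pmod P$ from the factor $u$. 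Both are geometric series in $q=z(1-Tz)^P/(1-T)^P$. Using $(-1)^{P-1}=-1$, I expect the result
$$\sum_{A\ge0}D_AT^A=\frac{(1-T)^{P}-z(1-Tz)^{P-1}(1-T)}{(1-T)\bigl((1-T)^P-z(1-Tz)^P\bigr)}.$$
One then subtracts $D_0=1$.

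\textbf{Matching the two sides.} This is the main obstacle: the left side has denominator $(1-T)^P-z(1-Tz)^P$, while the right side has $(1-T)^P-z(1-z)^PT^p$. Lemma \ref{HermiteId} is exactly the bridge, since
$$(1-z)\bigl((1-T)^P-wT^p\bigr)=(1-Tz)\bigl((1-T)^P-z(1-Tz)^P\bigr).$$
I will cross-multiply both closed forms over a common denominator and use this identity to replace one denominator by the other. What remains is a polynomial identity in $T,z$, to be checked by direct expansion, possibly using $(1-Tz)^p-T^p(1-z)^p=(1-T)^p$ once more. The delicate points are the bookkeeping of the two surviving residue classes and the signs in the $P$-section step. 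If the final simplification resists, my fallback is to compare coefficients of $T^A$ directly after the denominator swap.
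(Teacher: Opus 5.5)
Your proposal is correct and follows essentially the same route as the paper: the generating function in $T$, Lemma \ref{averagingDA} with $(1+\eta y)^P=(1+\eta zy)/(1+\eta y)$, and a $P$-section extraction yielding $\sum_{A\ge0}D_AT^A=\frac{(1-T)^{P-1}-z(1-Tz)^{P-1}}{(1-T)^P-z(1-Tz)^P}$; then Lemma \ref{HermiteId} trades this denominator for $(1-T)^P-z(1-z)^PT^p$ before the geometric expansion. The only difference is bookkeeping. The paper first splits $\frac{1+u}{(1-T)+u(1-Tz)}=\frac{1}{1-Tz}+\frac{T(1-z)}{1-Tz}\cdot\frac{1}{(1-T)+u(1-Tz)}$, so only one residue class has to be extracted. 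In your version, the identity left after the denominator swap is a direct expansion, $(1-T)^P-z(1-Tz)^P+T(1-z)(1-T)^{P-1}=(1-Tz)\bigl((1-T)^{P-1}-z(1-Tz)^{P-1}\bigr)$, and needs no further Frobenius step.
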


\begin{proof}
    We define 
    $$
    \mathcal{D}(T,z):=\sum_{B\geq 0}D_B(z)T^B,
    $$
    where $D_0(z) = 1$. By Lemma \ref{averagingDA}, we have
    $$\mathcal{D}(T, z) = \frac{1}{P} \sum_{\eta \in \F_p^{\times}} \sum_{B \geq 0} \big(T(1 + \eta y)^{P} \big)^B = \frac{1}{P} \sum_{\eta \in \F_p^{\times}} \frac{1}{1 - T(1+\eta y)^{P}}$$
    by the geometric series:
    $$\sum_{B \geq 0} \big(T(1 + \eta y)^P \big)^B = \frac{1}{1 - T(1+\eta y)^{P}}.$$
    Since $P = p-1$ and over $\F_p$, we have
    $$(1 + \eta y)^P = \frac{(1 + \eta y)^p}{1 + \eta y} = \frac{1 + \eta^p y^p}{1 + \eta y} = \frac{1 + \eta y^P y}{1 + \eta y} = \frac{1 + \eta z y}{1 + \eta y},$$
    where we set $y^P = z$ and used Fermat's little theorem for $\eta$.
    Now, by defining the variables $a := 1 - T$ and $b := y(1-Tz)$, we can simplify the expression:
    $$\frac{1}{1 - T(1+\eta y)^{P}} = \frac{1}{1 - T \frac{1 + \eta z y}{1 + \eta y}} = \frac{1 + \eta y}{a + \eta b}  =\frac{1}{1-Tz}+\frac{T(1-z)}{1-Tz}\cdot\frac{1}{a+\eta b},$$
    where the second equality follows from multiplying numerator and denominator by $1+\eta y$ and the last equality follows from a standard partial fraction split.
    Averaging the second factor gives
    $$
    \begin{aligned}
    \frac{1}{P}\sum_{\eta\in\mathbb{F}_p^\times}\frac{1}{a+\eta b}
    &=\frac{1}{P}\sum_{\eta\in\mathbb{F}_p^\times}\frac{1}{a}\sum_{n\geq 0}(-1)^n\eta^n\left(\frac{b}{a}\right)^n\\
    &=\frac{1}{a}\sum_{q\geq 0}\left(\frac{b}{a}\right)^{qP}\\
    &=\frac{a^{P-1}}{a^P-b^P},
    \end{aligned}
    $$
    where we have again used the identity: 
    $$
    \frac{1}{P} \sum_{\eta \in \F_p^{\times}} \eta^n
    =
    \begin{cases}
    1, & P\mid n,\\
    0, & P\nmid n.
    \end{cases}
    $$
    and, in the surviving terms, $n=qP$; the sign $(-1)^n$ is then $1$ because $P$ is even.

    Since $a=1-T$ and $b^P=z(1-Tz)^P$, this yields
    $$
    \mathcal{D}(T,z)=\frac{1}{1-Tz}+\frac{T(1-z)(1-T)^{P-1}}{(1-Tz)\bigl((1-T)^P-z(1-Tz)^P\bigr)}. 
    $$
    By Lemma \ref{HermiteId}, we can rewrite $\mathcal{D}(T, z)$ as
    $$\mathcal{D}(T,z)=\frac{1}{1-Tz}+\frac{T(1-T)^{P-1}}{(1-T)^P-z(1-z)^PT^p}.$$ 

    Since $\frac{1}{1-Tz} = \sum_{B\geq 0}z^BT^B$ and $\mathcal{D}(T,z) = \sum_{B \geq 0} D_B(z) T^B$, we obtain
    $$
    \begin{aligned}
    \sum_{B\geq 0}(D_B(z)-z^B)T^B
    &=\frac{T(1-T)^{P-1}}{(1-T)^P-z(1-z)^PT^p}\\
    &=\frac{T}{1-T}\cdot \frac{1}{1-\dfrac{z(1-z)^PT^p}{(1-T)^P}}\\
    &=\sum_{\omega\geq 0}\bigl(z(1-z)^P\bigr)^\omega T^{p\omega+1}(1-T)^{-P\omega-1}.
    \end{aligned}
    $$
    Taking the coefficient of $T^A$ gives the first equality of the statement. 

    Now, for the second equality of the statement, if $A-p\omega-1<0$, then
    $$
    {}[T^{A-p\omega-1}](1-T)^{-P\omega-1}=0
    $$
    because this is a negative power of $T$ in an ordinary power series. If $A-p\omega-1\geq 0$, which is equivalent to $0\leq \omega\leq \left\lfloor \frac{A-1}{p}\right\rfloor$, then
    $$
    \begin{aligned}
    {}[T^{A-p\omega-1}](1-T)^{-P\omega-1}
    &=\binom{P\omega+(A-p\omega-1)}{A-p\omega-1}\\
    &=\binom{A-1-\omega}{A-p\omega-1}\\
    &=\binom{A-1-\omega}{P\omega}.
    \end{aligned}
    $$
\end{proof}

The following lemma explains why the previous identity is useful. Multiplication by $F_M(z)$ and extraction of $[z^M]$ detects the $M$-th power of $W=z(1-z)^P$.

\begin{Lem}\label{detectorlem}
For every $A\geq 1$ and every $M\geq 0$,
$$
[z^M]F_M(z)(D_A(z)-z^A)
=
[z^{A-1-pM}]F_M(z).
$$
The coefficient on the right is interpreted as zero if $A-1-pM<0$.
\end{Lem}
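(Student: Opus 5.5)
By Lemma \ref{DA-ZLem}, $D_A(z)-z^A$ is a polynomial in the single quantity $W:=z(1-z)^P$. Since extraction of $[z^M]$ is linear, we get
$$[z^M]F_M(z)(D_A(z)-z^A)=\sum_{\omega\geq 0}\kappa_\omega\,[z^M]F_M(z)W^\omega,\qquad \kappa_\omega:=[T^{A-p\omega-1}](1-T)^{-P\omega-1}.$$
The plan is therefore to compute the \emph{detector} values $[z^M]F_M(z)W^\omega$, show that only $\omega=M$ survives, and then identify the surviving coefficient $\kappa_M$ with the right-hand side of the lemma.

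For the detector, write
$$F_M(z)W^\omega=z^\omega(1-z)^{P\omega-PM-1},$$
so that
$$[z^M]F_M(z)W^\omega=[z^{M-\omega}](1-z)^{P(\omega-M)-1}.$$
We split into three cases.
\begin{enumerate}
\item If $\omega>M$, the target exponent $M-\omega$ is negative, so the coefficient vanishes.
\item If $\omega=M$, the series is $(1-z)^{-1}$ and its constant term is $1$.
\item If $\omega<M$, set $n:=M-\omega\geq 1$. We need $[z^n](1-z)^{-Pn-1}=\binom{Pn+n}{n}=\binom{pn}{n}$. By Lucas' theorem, $\binom{pn}{n}\equiv 0 \pmod p$ for $n\geq 1$.
\end{enumerate}
This is the same Lucas computation already used in Lemma \ref{preimageLem}. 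Hence $[z^M]F_M(z)W^\omega=\delta_{\omega,M}$ in $\F_p$.

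Substituting into the expansion, we obtain
$$[z^M]F_M(z)(D_A(z)-z^A)=\kappa_M=[T^{A-pM-1}](1-T)^{-PM-1}.$$
Since $(1-T)^{-PM-1}=F_M(T)$ by definition, this equals $[z^{A-1-pM}]F_M(z)$. The convention that $[T^q]$ of a power series is zero for $q<0$ matches the stated convention for the right-hand side when $A-1-pM<0$. Using the first form of Lemma \ref{DA-ZLem}, with its sum over all $\omega\geq 0$, avoids any bookkeeping about the range $\omega\leq\lfloor (A-1)/p\rfloor$. Indeed, terms outside that range have $\kappa_\omega=0$, and the interchange of the finite sum with $[z^M]$ is harmless.

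The argument is short once Lemma \ref{DA-ZLem} is available. The only delicate point is the detector computation in the case $\omega<M$: one must check that the exponent $-Pn-1$ is exactly of the form that makes the coefficient $\binom{pn}{n}$, rather than some other binomial that need not vanish mod $p$. I will verify this carefully via $-P(M-\omega)-1$ and the generalized binomial identity.
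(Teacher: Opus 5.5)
Your proposal is correct and follows essentially the paper's argument: expand $D_A(z)-z^A$ in powers of $W=z(1-z)^P$ via Lemma \ref{DA-ZLem}, prove the detector identity $[z^M]F_M(z)W^\omega=\delta_{\omega,M}$ by the same three cases (using Lucas for $\omega<M$), and keep only $\omega=M$. The one small difference is that you read off $\kappa_M$ from the first form of Lemma \ref{DA-ZLem}, so it is immediately $[z^{A-1-pM}]F_M(z)$; the paper instead uses the binomial form and checks that $\binom{MP+A-1-pM}{A-1-pM}=\binom{A-1-M}{PM}$.
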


\begin{proof}
Let $W := z(1-z)^P$. We first record the detector identity:
$$
[z^M]F_M(z)W^\omega
=
\begin{cases}
1, & \omega=M,\\
0, & \omega\neq M.
\end{cases}
$$

Indeed, if $\omega>M$, then after removing the factor $z^\omega$, we would need the coefficient of a negative power of $z$, so the coefficient is zero. If $\omega=M$, then
$$
[z^M]F_M(z)W^M
=
[z^0](1-z)^{-1}
=
1.
$$
Finally, if $\omega<M$, set $n:=M-\omega>0$. Then
$$
[z^M]F_M(z)W^\omega
=
[z^n](1-z)^{-nP-1}
=
\binom{nP+n}{n}
=
\binom{np}{n}
\equiv 0 \pmod p
$$
by Lucas' theorem.

Now Lemma \ref{DA-ZLem} gives
$$
D_A(z)-z^A
=
\sum_{0\leq \omega\leq \left\lfloor (A-1)/p\right\rfloor}
\binom{A-1-\omega}{P\omega}W^\omega.
$$
Applying the detector identity, only the term $\omega=M$ can contribute. Therefore
$$
[z^M]F_M(z)(D_A(z)-z^A)
=
\begin{cases}
\binom{A-1-M}{PM}, & A-1-pM\geq 0,\\
0, & A-1-pM<0.
\end{cases}
$$

It remains to identify this with the coefficient on the right. If $A-1-pM<0$, then by convention
$$
[z^{A-1-pM}]F_M(z)=0.
$$
If $A-1-pM\geq 0$, then
$$
\begin{aligned}
{}[z^{A-1-pM}]F_M(z)
&=
[z^{A-1-pM}](1-z)^{-MP-1}\\
&=
\binom{MP+A-1-pM}{A-1-pM}\\
&=
\binom{A-1-M}{PM}.
\end{aligned}
$$
Thus the two sides agree in all cases.
\end{proof}

We can now return to the swap contribution. Vandermonde's identity first rewrites $G_j$ as a coefficient of $F_m(z)$, then the previous lemma converts this coefficient into an expression involving $D_{A_0}(z)-z^{A_0}$.

\begin{Prop}\label{swapcontri}
The swap contribution $G_j$ satisfies
$$
G_j=[z^m]F_m(z)(D_{A_0}(z)-z^{A_0}).
$$
\end{Prop}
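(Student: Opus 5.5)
The plan is to compute both sides of the claimed identity as coefficients of $F_m(z)$ and to check that the two exponents agree. For the right-hand side I apply Lemma \ref{detectorlem} with $A=A_0=s+1-d\geq 1$ and $M=m=s-j\geq 0$. This gives
$$
[z^m]F_m(z)\bigl(D_{A_0}(z)-z^{A_0}\bigr)=[z^{A_0-1-pm}]F_m(z).
$$
It therefore suffices to show that $G_j=[z^{A_0-1-pm}]F_m(z)$. I first simplify the exponent:
$$
A_0-1-pm=s-d-p(s-j)=pj-(Ps+d)=pj-r.
$$
So the target is $G_j=[z^{pj-r}]F_m(z)$.

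For the left-hand side I start from $G_j=\sum_{k\geq 0}a_k\,c_j(r+k-j)$. As already noted, extending the $k$-range beyond $s-1$ only adds zero terms. This sum is a correlation rather than a convolution, so the key step is to turn it into a convolution using the symmetry of $c_j$. For $0\leq q\leq jP$,
$$
c_j(q)=(-1)^q\binom{jP}{q}=(-1)^{q}\binom{jP}{jP-q}=(-1)^{jP-q}\binom{jP}{jP-q}=c_j(jP-q),
$$
because $jP$ is even. Both sides vanish when $q<0$ or $q>jP$, so $c_j(q)=c_j(jP-q)$ holds for all $q\in\mathbb Z$. Hence
$$
G_j=\sum_{k\geq 0}a_k\,c_j\bigl(jP-r+j-k\bigr)=\sum_{k\geq 0}a_k\,c_j(pj-r-k).
$$
By the product identity $A_s(z)C_j(z)=F_m(z)$, recorded before the definition of $D_A$ (this is Vandermonde's identity), this equals $[z^{pj-r}]F_m(z)$. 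When $pj-r<0$ every term vanishes, which matches the convention $[z^q]F_m=0$ for $q<0$. Comparing with the target above completes the argument.

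The only delicate points are the following. First, the symmetry $c_j(q)=c_j(jP-q)$ must be verified including the boundary and negative-index conventions, and it uses that $P$ is even. Second, the hypothesis $A_0\geq 1$ of Lemma \ref{detectorlem} must hold; it does, since $d\leq s$. Third, the sign conventions for negative exponents on the two sides must match; both sides vanish exactly when $pj<r$, so they do. I expect the symmetry trick, which converts the correlation sum into a convolution, to be the main conceptual step; everything else is bookkeeping of exponents.
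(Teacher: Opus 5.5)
Your proof is correct and follows essentially the same route as the paper. Both arguments reflect $\binom{jP}{r+k-j}$ to $\binom{jP}{\Lambda-k}$ with $\Lambda=pj-r$, apply Vandermonde (equivalently $A_s(z)C_j(z)=F_m(z)$) to get $G_j=[z^{\Lambda}]F_m(z)$, and then invoke Lemma \ref{detectorlem} with $A=A_0$, $M=m$ after checking $\Lambda=A_0-1-pm$. The only cosmetic difference is where the sign is handled. You absorb it into the symmetry $c_j(q)=c_j(jP-q)$, using that $jP$ is even. The paper instead passes through $(-1)^{r-j}\binom{-mP-1}{\Lambda}$ and uses that $r-j+\Lambda=jP$ is even.
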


\begin{proof}
    Starting from $G_j=\sum_{k\geq 0}a_kc_j(r+k-j)$ and using the definitions,
$$
G_j=(-1)^{r-j}\sum_{k\geq 0}\binom{-sP-1}{k}\binom{jP}{r+k-j}.
$$
Let $\Lambda := jP+j-r = jp-r$. Then $jP-(r+k-j) = \Lambda-k$. Therefore, by Vandermonde's identity,
$$
\begin{aligned}
G_j
&=(-1)^{r-j}\sum_{k\geq 0}\binom{-sP-1}{k}\binom{jP}{\Lambda-k}\\
&=(-1)^{r-j}\binom{-sP-1+jP}{\Lambda}\\
&=(-1)^{r-j}\binom{-mP-1}{\Lambda}.
\end{aligned}
$$
If $\Lambda < 0$, this term vanishes. For $\Lambda \geq 0$, we have $\binom{-mP-1}{\Lambda} = (-1)^\Lambda \binom{mP+\Lambda}{\Lambda}$. Thus, $G_j = (-1)^{r-j+\Lambda} \binom{mP+\Lambda}{\Lambda}$. Because $r-j+\Lambda = r-j+jp-r = jP$, which is an even integer, it follows that for $\Lambda \geq 0$, $G_j = \binom{mP+\Lambda}{\Lambda} = [z^\Lambda]F_m(z)$.

Combining this with the $\Lambda < 0$ case, and applying the convention that $[z^\Lambda]F_m(z) = 0$ for $\Lambda < 0$, we conclude that $G_j = [z^\Lambda]F_m(z)$ for all integers $\Lambda$.

Finally,
$$
\begin{aligned}
\Lambda
&=jp-r\\
&=jp-(Ps+d)\\
&=s-d-p(s-j)\\
&=A_0-1-pm.
\end{aligned}
$$
By Lemma \ref{detectorlem} with $A = A_0$ and $M=m$,
$$
[z^{A_0-1-pm}]F_m(z)
=
[z^m]F_m(z)(D_{A_0}(z)-z^{A_0}).
$$
Since $\Lambda=A_0-1-pm$, this gives
$$
G_j=[z^\Lambda]F_m(z)
=
[z^m]F_m(z)(D_{A_0}(z)-z^{A_0}).
$$

\end{proof}

\subsection{Cancellation of the shear and swap contributions}
We now combine the two coefficient expressions. The terms involving $D_{A_0}(z)$ cancel, leaving only the shifted coefficient of $F_m(z)$. Together, this proves Proposition \ref{interiorCoefficientProp}.

\begin{proof}[Proof of Proposition \ref{interiorCoefficientProp}]
From Propositions \ref{shearcontri} and \ref{swapcontri}, we have $S_j = -[z^m]F_m(z)D_{A_0}(z)$ and $G_j = [z^m]F_m(z)(D_{A_0}(z)-z^{A_0})$.

Summing these contributions yields:
$$
\begin{aligned}
S_j+G_j
&=-[z^m]F_m(z)D_{A_0}(z)+[z^m]F_m(z)(D_{A_0}(z)-z^{A_0})\\
&=-[z^m]F_m(z)z^{A_0}\\
&=-[z^{m-A_0}]F_m(z).
\end{aligned}
$$
Since $m-A_0=N$, this gives $S_j+G_j=-[z^N]F_m(z)$. Substituting the definitions $F_m(z)=(1-z)^{-mP-1}$, $m = s-j$, and $N = d-j-1$, this becomes $S_j+G_j=-[z^{d-j-1}](1-z)^{-(s-j)P-1}$. Expressing this in terms of the original variables $r$, $s$, and $p$ via the generalized binomial formula, we obtain
$$S_j+G_j = (-1)^{r-j} \binom{(j-s)(p-1)-1}{r-(p-1)s-j-1}.$$
Finally, for $0 \leq j < s$, the total coefficient is given by $C_j = I_j + S_j + G_j$. Since $I_j = 0$ in this range, the sum $S_j + G_j = C_j$, completing the proof.
\end{proof}

\clearpage
\printbibliography

\end{document}